\documentclass[11pt,letterpaper,reqno]{amsart}
\usepackage{hyperref}
\usepackage{amssymb}
\usepackage{amsmath,amscd,color,amsfonts}
\DeclareSymbolFontAlphabet{\mathbb}{AMSb}
\DeclareMathAlphabet{\mathbscr} {U}{BOONDOX-cal}{r}{n}
\usepackage{enumitem}
\usepackage{combelow}
\usepackage[all]{xy}
\usepackage{enumerate}
\usepackage{mathrsfs}
\usepackage{tensor}
\usepackage{stackrel}
\usepackage{marginnote}
\usepackage[normalem]{ulem}
\usepackage{comment}
\usepackage{tikz}

\providecommand{\U}[1]{\protect\rule{.1in}{.1in}}
\newtheorem{theorem}{Theorem}[section]
\newtheorem*{theorem*}{Theorem}
\newtheorem*{corollary*}{Corollary}
\newtheorem*{claim*}{Claim}

\newtheorem{corollary}[theorem]{Corollary}

\newtheorem{definition}[theorem]{Definition}

\newtheorem{example}[theorem]{Example}
\newtheorem{lemma}[theorem]{Lemma}
\newtheorem{proposition}[theorem]{Proposition}
\newtheorem*{proposition*}{Proposition}
\newtheorem{remark}[theorem]{Remark}
\numberwithin{equation}{section}

\newcommand{\im}{\operatorname{Im}} 
\renewcommand{\d}{\mathrm{d}} 
\newcommand{\R}{\mathbb{R}}         
\newcommand{\C}{\mathbb{C}}         
\newcommand{\Tt}{\mathbb{T}}         

\newcommand{\F}{\mathcal{F}}            
\renewcommand{\O}{\mathcal{O}}          

\newcommand{\sslash}{\mathbin{/\mkern-6mu/}}

\newcommand{\twprod}{\mathbin{%
    \ooalign{\raise1.15ex\hbox{$\scriptstyle\sim$}\cr\hidewidth$\times$\hidewidth\cr}%
    }}

\begin{document}
\title{On the chamber decomposition of a Hamiltonian torus action}

\author{Maarten Mol}
\address{Department of Mathematics, University of Toronto, 40 St. George Street, Toronto, ON M5S 2E4 Canada}
\email{maarten.mol.math@gmail.com}

\begin{abstract} 
For a compact connected Hamiltonian
$T$-space (with $T$ a torus), the components of the set of (relative) regular values of
the momentum map are open convex polytopes, whose closures (called the chambers) form a polyhedral decomposition of the momentum polytope. In a previous paper we showed that there is a canonical affine stratification of the momentum polytope with the property that, while varying through a stratum, the fibers of the momentum map do not change as $T$-spaces. In this paper we show that
the strata of this stratification are the relative interiors of the faces of
the chambers.  In doing so, we also give a proof of the fact that these faces form a polyhedral complex, which (it seems) was missing from the literature. We further extend the above to Hamiltonian $T$-spaces whose momentum map is proper as map into a convex set and, in the compact K\"{a}hler case, we point out a description of the above strata in terms of the $T_\mathbb{C}$-orbit closure polytopes.
\end{abstract}
\maketitle

\setcounter{tocdepth}{1}
\tableofcontents

\section{Introduction}
It is a classical theorem due to Atiyah \cite{At82}, as well as Guillemin and Sternberg \cite{GS82}, that the momentum map image of a Hamiltonian torus action on a compact connected symplectic manifold is a convex polytope. A subsequent theorem, due to Delzant \cite{De88}, shows that the equivariant symplectic geometry of a symplectic manifold with a Hamiltonian torus action is entirely encoded by the corresponding polytope when all of its symplectic reduced spaces are points (and the action is effective). For example, in that case the face structure of the polytope corresponds to the orbit type stratification of the torus action and the set of regular values attained by the momentum map is the interior of the polytope. The geometry of Hamiltonian torus actions whose symplectic reduced spaces are not just points is much more intricate. For instance, for such torus actions the set of regular values attained by the momentum map is not necessarily equal to the interior of the polytope. Instead, the following holds.
\begin{theorem}[Folklore]\label{thm:folklore:intro} Let $\mu:(S,\omega)\to \mathfrak{t}^*$ be a compact and connected Hamiltonian $T$-space. The connected components of the set of relative regular values attained by $\mu$ are open convex subsets of the momentum polytope $\Delta:=\mu(S)$ and their closures are convex polytopes, the faces of which form a polyhedral complex.  
\end{theorem}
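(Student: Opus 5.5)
We first reduce to the effective case. Let $\mathfrak{t}_S\subset\mathfrak{t}$ be the Lie algebra of the principal isotropy group's complement, i.e.\ $\Delta$ spans an affine subspace $A=\mu(p)+\mathfrak{t}_S^{\,0\perp}$. Here a relative regular value means a point of $\Delta$ at which $\mu$, viewed as a map into $A$, is a submersion at every point of the fibre. The set of critical points of $\mu:S\to A$ consists of the points $p$ whose stabilizer $T_p$ has dimension exceeding that of the generic stabilizer.

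By the local normal form (Marle--Guillemin--Sternberg), near each such $p$ the image $\mu(S_{H})$ of the orbit type stratum for the connected stabilizer $H=(T_p)^0$ is locally $\mu(p)+\mathfrak{h}^0$. Here $\mathfrak{h}^0$ is the annihilator of $\mathfrak{h}$ in $\mathfrak{t}^*$, intersected with $A$. By compactness, and because there are finitely many connected stabilizers, each connected component $C$ of the fixed point set $S^H$ is a compact Hamiltonian $T/H$-space. Atiyah--Guillemin--Sternberg then makes $\mu(C)$ a convex polytope lying in an affine subspace parallel to $\mathfrak{h}^0$, of codimension $\dim\mathfrak{h}-\dim\mathfrak{h}_{\mathrm{gen}}\ge 1$ in $A$.

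Hence the set of critical values is a finite union $W=\bigcup_{(H,C)}\mu(C)$ of lower-dimensional convex polytopes in $A$. The relative regular values in $\Delta$ are $\Delta\setminus W$. I would also use that $\mu(S)\cap$ (small ball) is locally a cone, so $\Delta$ itself is among these polytopes, namely for $H$ equal to the generic stabilizer.

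Next we show that the components of $\Delta^{\mathrm{reg}}:=\mathrm{relint}(\Delta)\setminus W$ are convex. Consider the finite arrangement $\mathcal{H}$ of affine hyperplanes of $A$ that contain a facet of some $\mu(C)$ or the affine span of some $\mu(C)$ of codimension one. Every polytope $\mu(C)$ of codimension at least two is contained in some codimension-one $\mu(C')$: one checks this via the local model, since a generic point of a codimension-one wall is hit by codim-one stabilizers nearby. This claim is the delicate point, and if it fails we instead use the following argument.

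The cleaner route is the Duistermaat--Heckman / local-convexity argument: for any $\xi\in\Delta^{\mathrm{reg}}$ the component $U$ containing $\xi$ is open. It is also locally convex near its boundary points, because near a point $\eta\in\overline U\cap W$ the local normal form says $\Delta\setminus W$ is locally the complement in a polyhedral cone of a finite union of subspaces. The closure of each local complementary component is then locally a polyhedral cone, hence locally convex. A connected open set with locally convex closure (Tietze--Nakajima) is convex. Its closure $\overline U$ is locally polyhedral and compact, hence a convex polytope.

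For the complex property, note that each chamber $\overline U$ is an intersection of $\Delta$ with closed half-spaces bounded by hyperplanes containing facets of walls $\mu(C)$ of codimension one. Two chambers $\overline U,\overline U'$ meet along a face of each: their intersection lies in $W\cup\partial\Delta$. Separating them by a hyperplane containing a wall through $\overline U\cap\overline U'$ (convex disjoint relative interiors) gives $\overline U\cap\overline U'=\overline U\cap L=\overline U'\cap L$ for a supporting hyperplane $L$ only if the intersection is a common face. Showing this needs that $W$ near every point of the intersection is, by the local model, a union of linear subspaces through that point; no wall can cut only one side.

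I expect this last point to be the main obstacle: walls can end in the interior of $\Delta$. I would handle it by a local-to-global argument, proving that the intersection of two chambers is locally (in the local cone model at each point) a common face. Then I would use convexity of both sets to upgrade the local face property to a global one.
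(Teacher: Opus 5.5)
Your identification of the relative critical values as the finite union $W$ of the polytopes $\mu(C)$ is fine. The gap is the local description of $W$ on which both your convexity step and your complex step rest: it is incorrect, and you never replace it with a correct one.

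Near $\eta\in\Delta$, $W$ is not a union of linear subspaces through $\eta$. In the normal form at $p\in\mu^{-1}(\eta)$, the critical values near $\eta$ coming from a neighbourhood of $T\cdot p$ form the union of the sets $\eta+\pi_p^{-1}(\mathrm{Cone}(\mathcal{I}))$. Here $\mathcal{I}$ ranges over the subsets of slice weights that span a proper subspace of the span of all of them. These are cones that can terminate at $\eta$; these are your ``walls ending in the interior''. Moreover, $\mu^{-1}(\eta)$ can meet several orbit type strata, so several such configurations are superimposed, and you do not address this.

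From ``a cone minus finitely many lower-dimensional cones'' nothing you need follows:
\begin{itemize}
\item[(i)] $\R^2$ minus two perpendicular closed rays from the origin has a component whose closure is not convex.
\item[(ii)] $\R^2$ minus one closed ray is a connected open set with convex closure that is not itself convex.
\end{itemize}
So the principle ``a connected open set with locally convex closure is convex'' is false. Tietze--Nakajima applied to $\overline U$ at best gives convexity of $\overline U$. To get $U=\mathrm{relint}(\overline U)$, you need $U$ itself to coincide, near every point $\eta\in\overline U$, with $\eta$ plus an open convex cone. The local face property you explicitly leave open at the end (``no wall can cut only one side'') is the same missing input.

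The missing idea is that the local chamber structure is a fan, and this comes from GKZ theory. Take the weights $\mathcal{V}$ of one slice representation. The components of the set of relative regular values in $\mathrm{Cone}(\mathcal{V})$ are the open maximal cones of the secondary fan $\mathrm{Ch}(\mathcal{V})$. They are convex, because the chamber through a regular $x$ is the intersection of the relative interiors of the cones $\mathrm{Cone}(\mathcal I)$ with $\mathrm{Span}_\R(\mathcal I)=\mathrm{Span}_\R(\mathcal V)$ that contain $x$. Together with their faces they form a fan.

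At a general $\eta$ you must take the common refinement $\F_\eta=\bigcap_\Sigma\pi_\Sigma^{-1}(\F_\Sigma)$ over the orbit type strata meeting $\mu^{-1}(\eta)$ (Lemma \ref{lemma:intersection:fans}). You then use properness, MGS normal forms at finitely many points of the fibre, and the tangent space descriptions in Lemma \ref{lemma:stratification:chamberfan} and Proposition \ref{prop:linearchamberdec}. These show that near $\eta$ the decomposition of $\Delta$ is exactly $\eta+\F_\eta$, which is Proposition \ref{prop:isofans:locmodhamstrat}.

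Once this is available, your Tietze-type convexity argument and your upgrade of the local face property to a global one via convexity do go through. They are essentially the open-strata case of Theorems \ref{thm:convexity:strata:general} and \ref{thm:affstrat:chamberdec:correspondence}, which the paper proves for all strata. Without the local fan structure, neither the convexity of the components nor the polyhedral complex property is established. Your first, hyperplane-arrangement route could not succeed on its own either: because walls end in the interior, a regular component is in general a union of several cells of that arrangement.
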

Here, by relative regular values of $\mu$ we mean regular values of $\mu$ when considered as map into the affine hull of its image $\Delta$. The convex polytopes in this theorem are usually called the chambers of $\Delta$ and their codimension-one faces are often called walls (see, e.g., \cite{BuTer26,ChSaSe26,GoHoJe,GLS96,GS89,Met00,Par11,WaYa26}). \\

Whilst the convexity part of Theorem \ref{thm:folklore:intro} seems to have been stated for the first time in \cite{GS89} (and  analogues of Theorem \ref{thm:folklore:intro} have been proved in geometric invariant theory -- cf. \cite{BriPro90,DolHu98,Res00,Thad96}), until very recently \cite{Liu26} a proof of this convexity statement had not appeared in the literature. In this paper we give a proof of the full statement of Theorem \ref{thm:folklore:intro} and explain the relationship of the decomposition into chambers with the stratification in the following theorem.  
\begin{theorem}[\cite{Mol}]\label{thm:momimstrat} Let $\mu:(S,\omega)\to \mathfrak{t}^*$ be a Hamiltonian $T$-space with $\mu$ proper as map into its image $\Delta:=\mu(S)$. There is a natural affine stratification $\mathcal{S}_\mathrm{Ham}(\Delta)$ of $\Delta$ (see Definition \ref{def:affstrat}) with the property that, for each stratum $\sigma$, the restriction
\[
\mu:\mu^{-1}(\sigma)\to \sigma
\] is an equivariantly locally trivial fibration.
\end{theorem}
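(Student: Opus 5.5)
The plan is to build the stratification from local normal form data and then obtain local triviality by a stratum-wise equivariant Ehresmann argument. The first step is to invoke the Marle--Guillemin--Sternberg local normal form. Near an orbit $T\cdot p$ with stabilizer $H=T_p$ and symplectic slice representation $V$ of $H$, a $T$-invariant neighbourhood is modelled on $T\times_H(\mathfrak{h}^0\times V)$, with momentum map
\[
[t,\alpha,v]\mapsto \mu(p)+\alpha+\Phi_V(v),
\]
where $\Phi_V:V\to\mathfrak{h}^*$ is the quadratic momentum map of the linear $H$-action. Near $\mu(p)$, the image of this model is the translate of $\mathfrak{h}^0+\Phi_V(V)$, so it is a polyhedral cone. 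I would define the germ of the stratification at $x\in\Delta$ using the finitely many local models of the orbits in $\mu^{-1}(x)$; finiteness holds by properness. For each model, the cone $\Phi_V(V)$, and more precisely the images $\Phi_V(V^{K})$ for the subgroups $K\subset H$ occurring as stabilizers in $V$, determine a finite family of polyhedral cones. The common refinement into relatively open faces gives the local strata.

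The second step is to check that these local decompositions patch to a global affine stratification, namely a locally finite partition into relatively open convex sets satisfying the frontier condition. The key check is that the local decomposition at $x$, restricted to a nearby point $y$, agrees with the one defined at $y$. This follows because the normal form at a point near $p$ is obtained from the one at $p$ by restricting to a smaller slice, and the cones at $y$ are the tangent cones at $y$ of the cones at $x$.

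The third step, local triviality, is where I expect the main obstacle to lie. Over a stratum $\sigma$, the orbit-type pieces $\mu^{-1}(\sigma)\cap S_{(K)}$ should be submanifolds. By construction, $\mu$ restricted to each of them should be a submersion onto $\sigma$, since $\sigma$ lies in the relative interior of each relevant cone $\mathfrak{k}^0$-translate. I would then construct a $T$-invariant controlled vector field lifting constant vector fields on $\sigma$, adapted to the orbit type stratification of $\mu^{-1}(\sigma)$. This uses Mather's control theory, or more concretely $T$-invariant averaging of local lifts built in the normal form, where a lift is given explicitly by translating the $\alpha$-coordinate. Its flow, which is complete by properness of $\mu$, trivializes $\mu^{-1}(\sigma)\to\sigma$ equivariantly.

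The delicate part is showing that these local translation lifts glue while remaining tangent to every orbit-type piece and keeping the fibers. For this I would use that, in the normal form, translation along directions of $\sigma$ can be realized by the $\mathfrak{h}^0$ factor combined with a $H$-equivariant rescaling in $V$ that preserves all of its stabilizer strata. Patching is then done with a $T$-invariant partition of unity on the base.
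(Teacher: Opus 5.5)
\textbf{Overall.} The paper does not reprove this theorem. It recalls from \cite{Mol} that $\mathcal{S}_\mathrm{Ham}(\Delta)$ is defined \emph{globally}: Proposition \ref{prop:piecewise-affine-cover} is applied to the cover $\mathcal{A}_\mathrm{Ham}$ by images of orbit-type strata. These images are relatively open by the normal form; the cover is locally finite, and closures of members are unions of members, by properness and the frontier condition upstairs. Each stratum is then characterized intrinsically by \eqref{eqn:tngtsp:hamstrat}, and no patching is needed. Your route (germs from normal forms, then patching, then flows) is different. Your effort is also misallocated: the step you call the main obstacle is routine, while the step you settle in one sentence is where the real gap is.

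\textbf{The gap is in Step 2.} First, you must pass to connected components: for a fixed point with circle slice weights $\pm1$, your refinement produces the member $\R\setminus\{0\}$. After doing so, your germ at $x$ is exactly the relatively open cones of $x+\F_x$, the isotropy fan of Definition \ref{def:isofans}. The compatibility you assert is then essentially what Proposition \ref{prop:isofans:locmodhamstrat} provides, and ``restricting to a smaller slice'' does not give it. By Proposition \ref{prop:linearchamberdec}, the germ at $x$ near $y$ is governed by stabilizers in the \emph{whole} linear fibres $\Phi_V^{-1}(\pi(y-x))$. Most of these points lie outside the normal-form chart. Rescaling $(\alpha,v)\mapsto(s\alpha,\sqrt{s}\,v)$ realizes them only over $x+s(y-x)$ with $s$ small, not over $y$. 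So what you actually need is that $\bigcap_{q\in\mu^{-1}(z)}\mathfrak{t}_q^0$ is constant for $z\in\,]x,y]$. The paper obtains this from Lemma \ref{lemma:affstratlocconvnhood}, which presupposes that $\mathcal{S}_\mathrm{Ham}(\Delta)$ is already an affine stratification. In your set-up you would have to prove it directly, e.g.\ by showing that near $x$ each $\mu(\Sigma_S)$ is exactly a finite union of relatively open cones with apex $x$. That in turn needs a uniform statement: small points of $\mathrm{relint}\,\Phi_V(V^K)$ are images of small $v$ with stabilizer exactly $K$. The cleaner fix is to use the intrinsic definition; the theorem does not need the fan structure at all.

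Also drop ``convex'', which is false in this generality. Take $S=S_1\sqcup S_2$ toric with $\mu(S_1)=[0,2]^2$ and $\mu(S_2)=[1,3]^2$. Then $\{z\in(0,2)^2: z_1<1\text{ or }z_2<1\}$ is a single, L-shaped stratum. What patching does give you is affineness: a connected set that is locally open in translates of a fixed subspace is open in one translate.

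\textbf{Step 3 is sound and easier than you think.} You need no control theory, no rescaling in $V$, and no tangency check.
\begin{itemize}
\item For $y\in\sigma$ and $\ell\in T_y\sigma$ you have $\ell\in\mathfrak{t}_p^0$ for every $p\in\mu^{-1}(y)$. So pure $\alpha$-translation by $\ell$, in normal forms centred at finitely many orbits covering $\mu^{-1}(y)$, gives $T$-invariant vector fields $X_i$ with $\d\mu(X_i)=\ell$ identically.
\item Glue them with a $T$-invariant partition of unity on $S$, not on the base: different orbits over the same point lie in different charts.
\item The resulting $X$ satisfies $\d\mu(X)=\ell$ on an open set containing $\mu^{-1}(B)$ for some neighbourhood $B$ of $y$, by properness. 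Its flow is not complete, but it is defined for $|t|$ small, uniformly.
\item The flow consists of $T$-equivariant diffeomorphisms between open subsets of $S$. Hence it preserves orbit types automatically and maps $\mu^{-1}(z)$ onto $\mu^{-1}(z+t\ell)$.
\item Composing flows along a basis of $T_y\sigma$ trivializes $\mu^{-1}(\sigma\cap B)\to\sigma\cap B$ by restrictions of ambient equivariant diffeomorphisms.
\end{itemize}
This is the strong form of triviality mentioned after the theorem; a Thom--Mather controlled lift would give only stratum-preserving homeomorphisms. Finally, your claim that $\mu$ maps each $\mu^{-1}(\sigma)\cap S_{(K)}$ \emph{onto} $\sigma$ is a consequence of this trivialization, not an input to it.
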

 The local triviality in this statement is meant in a sense that is stronger than topological local triviality and implies that the fibers and the reduced spaces (which are the quotients of the fibers) at any two values in the same stratum are not just homeomorphic, but isomorphic as differentiable stratified spaces with the orbit type stratification from \cite{LeSja91}. \\
 
 In \cite{Mol} we in fact established a version of Theorem \ref{thm:momimstrat} for Hamiltonian actions of arbitrary compact Lie groups (and, more generally, of proper quasi-symplectic groupoids). For Hamiltonian torus actions the construction of the stratification $\mathcal{S}_\mathrm{Ham}(\Delta)$ in \cite{Mol} boils down locally to taking appropriate intersections of the images of the $T$-orbit type strata in $S$ under the momentum map (as is recalled in more precise terms in Section \ref{sec:bckgr:hamstrat}). The result of this construction is related to the chamber decomposition as follows. 
 
\begin{theorem}\label{thm:main:stratcomparison} In the setting of Theorem \ref{thm:folklore:intro},  $\mathcal{S}_\mathrm{Ham}(\Delta)$ coincides with the stratification of $\Delta$ by relative interiors of the faces of its chambers. 
\end{theorem}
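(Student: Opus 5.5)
Both stratifications are partitions of $\Delta$ into relatively open convex pieces. Equality follows once we know that, for any two points $\alpha,\beta\in\Delta$, lying in the same stratum of $\mathcal{S}_\mathrm{Ham}(\Delta)$ is equivalent to lying in the relative interior of the same face of the chamber complex. I will work locally and use the local normal form. For the Hamiltonian side I use the description recalled in Section \ref{sec:bckgr:hamstrat}. Near a value $\alpha$, the germ of $\mathcal{S}_\mathrm{Ham}(\Delta)$ at $\alpha$ is cut out by the germs at $\alpha$ of the images $\mu(S_{(H)})$ of the $T$-orbit type strata. By the local convexity theorem these images are locally affine cones $\alpha+C$, where $C$ is spanned by the weights of the isotropy representations at points of $\mu^{-1}(\alpha)$. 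Concretely, two points near $\alpha$ lie in the same stratum iff they lie in exactly the same collection of germs $\mu(S_{H})$, as $H$ ranges over the stabilizers occurring. Here $\mu(S_H)$ near $\alpha$ is a union of pieces $\alpha+\mathfrak{h}^0\cap(\text{cone})$, i.e. locally a finite union of polyhedral cones of various dimensions.

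The chamber side is also expressed in terms of these images. The set of relative critical values of $\mu$ is the union of $\mu(S_H)$ over those stabilizer subgroups $H$ whose orbit-type image has dimension less than $\dim\Delta$. By the local normal form each such image is locally a finite union of convex polyhedral cones of codimension $\geq 1$ in $\operatorname{aff}(\Delta)$. So locally near $\alpha$ the chambers are the closures of the connected components of $\Delta$ minus a finite union of lower-dimensional cones. I first prove Theorem \ref{thm:folklore:intro} in this setting. Each relevant codimension-one cone spans a hyperplane $\alpha+\mathfrak{h}^0$, with $\dim H=1$. I will show, using the weights of the slice representation, that the critical image actually contains the full germ of that hyperplane intersected with $\Delta$ wherever the hyperplane meets the interior. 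The idea is that reduced spaces change topology across it: a weight $\xi$ of a circle $H$ has points of both signs on either side. Granting this, the chambers are locally intersections of $\Delta$ with half-spaces, hence convex. Convexity then globalizes because $\Delta$ is compact and connected, and the faces form a polyhedral complex because locally they are the faces of a central hyperplane arrangement restricted to a cone.

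For the comparison, fix $\alpha$. Then $\beta$ near $\alpha$ lies in the relative interior of the same chamber face as $\alpha$ iff $\beta$ lies in exactly the same hyperplanes (walls) through $\alpha$ and the same faces of $\Delta$ through $\alpha$. Both conditions are expressed via the subspaces $\mathfrak{h}^0$ for stabilizers $H$ at points of $\mu^{-1}(\alpha)$. Conversely, every $\mu(S_H)$ germ at $\alpha$ is $\Delta\cap(\alpha+\mathfrak{h}^0)$ intersected with a cone spanned by weights. Its relative boundary lies in lower-dimensional orbit-type images, hence is a union of faces of the arrangement. So membership in each $\mu(S_H)$ is constant on relative interiors of chamber faces, which gives the inclusion one way. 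The reverse inclusion follows because every wall and every face of $\Delta$ through $\alpha$ is itself such an orbit-type image. Since both partitions are locally determined and each refines the other near every point, they coincide.

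The main obstacle is the wall claim: that the local critical set near $\alpha$ is a union of full hyperplane sections rather than merely half-cones. A codimension-one image $\mu(S_H)$ could a priori end in the interior of $\Delta$, which would produce nonconvex chamber closures. To rule this out I would use the local normal form $\mu=\alpha+\mu_{\mathfrak{h}}\oplus$ (quadratic weights). I would first check that the boundary of the cone $\mu(S_H)$ is contained in $\mu$ of deeper strata, which by induction on $\dim\Delta$ are themselves walls or faces of $\Delta$. This is the step I expect to need the most care.
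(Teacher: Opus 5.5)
Your plan breaks down at exactly the step you flagged. The ``wall claim'' is not merely delicate but false, and both your convexity argument and your comparison argument rest on the central-hyperplane-arrangement picture it was meant to give. Walls routinely end in the interior of $\Delta$ without any loss of convexity.

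For instance, let $\Tt^2$ act on $\mathbb{C}P^3$ by $(t_1,t_2)\cdot[z_0:z_1:z_2:z_3]=[z_0:t_1z_1:t_2z_2:t_1^{-1}t_2^{-1}z_3]$, so that $\mu=\|z\|^{-2}\big(|z_1|^2-|z_3|^2,\,|z_2|^2-|z_3|^2\big)$. Then:
\begin{itemize}
\item[(1)] $\Delta$ is the triangle with vertices $(1,0),(0,1),(-1,-1)$, and the fixed point $[1:0:0:0]$ maps to the interior point $0$.
\item[(2)] The relative critical values are $\partial\Delta$ together with the three segments joining $0$ to the vertices. For example, the open segment from $0$ to $(1,0)$ is the image of the circle-isotropy stratum $\{z_2=z_3=0,\ z_0z_1\neq 0\}$.
\item[(3)] This wall stops at $0$: the point $(-\epsilon,0)$ on its affine span is a regular value.
\end{itemize}
The chambers are the three triangles with a vertex at $0$. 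They are convex, yet near $0$ the decomposition is a fan with three rays, not a central arrangement restricted to a cone. The same happens along the interior ray $\R_{>0}d$ of the GKZ fan of $a,b,c,d\in\R^3$ with $d$ interior to $\mathrm{Cone}(a,b,c)$.

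The example also refutes your claim that the walls through $\alpha$ are the spaces $\alpha+\mathfrak{h}^0$ for isotropy groups $H$ of points of $\mu^{-1}(\alpha)$. Here $\mu^{-1}(0)=\{|z_1|=|z_2|=|z_3|\}$ contains the fixed point and otherwise only points with finite isotropy. The circles $H$ whose $\mathfrak{h}^0$ span the walls through $0$ occur as isotropy groups only off this fibre. What is true is only the intersection statement: the chamber face through $\alpha$ has tangent space $\bigcap_{p\in\mu^{-1}(\alpha)}\mathfrak{t}_p^0$. That is precisely the content to be proved, and even granting Theorem \ref{thm:folklore:intro}, your comparison step does not supply it.

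The missing idea is the correct local model. Using the MGS normal form at finitely many $p_i\in\mu^{-1}(\alpha)$, the paper shows that the decomposition near $\alpha$ is the isotropy fan of Definition \ref{def:isofans}. This is an intersection of pull-backs of GKZ fans of the symplectic slice weights. Its cones are convex not because they are cut out by full hyperplanes, but because each relatively open GKZ chamber is the intersection of the relative interiors of the cones $\mathrm{Cone}(\mathcal{I})$ whose relative interior contains it. Lemma \ref{lemma:stratification:chamberfan} and Proposition \ref{prop:linearchamberdec} turn this into a tangent-space formula, which via Lemma \ref{lemma:affstratlocconvnhood} matches \eqref{eqn:tngtsp:hamstrat}.

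Two further steps of yours also need replacing.
\begin{itemize}
\item[(i)] $\mathcal{S}_\mathrm{Ham}(\Delta)$ is not the partition by ``same collection of orbit-type images''. In Example \ref{example:hamstrat}, the four open triangles near the origin lie in exactly the same images (only the interior of $\Delta$), yet they are four different strata. The stratification is pinned down by its tangent spaces \eqref{eqn:tngtsp:hamstrat}, and it is these that must be compared with the fan.
\item[(ii)] Once walls can end, local convexity does not globalize just because $\Delta$ is compact and connected. You must exclude a chamber reaching a point $\alpha$ through two different local cones at $\alpha$. The paper handles this with the segment-sweeping argument of Theorem \ref{thm:convexity:strata:general}, which uses only that $\Delta$ is convex and locally modelled on fans.
\end{itemize}
With these in place, the identification of strata with relative interiors of chamber faces is the general correspondence of Theorem \ref{thm:affstrat:chamberdec:correspondence}.
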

Proving this theorem was, in fact, our initial motivation for this paper. It shows, in particular, that the momentum map is equivariantly locally trivial along the relative interiors of the faces of the chambers and, consequently, that the reduced spaces of $\mu$ at any two values in such a relative interior are isomorphic as differentiable stratified spaces. On the other hand, it shows that the strata of $\mathcal{S}_\mathrm{Ham}(\Delta)$ are convex sets with convex polytopes as closures. \\

For Hamiltonian $T$-actions on compact K\"{a}hler manifolds (with $T$ assumed to act by biholomorphisms), Atiyah \cite{At82} also showed that the images of orbit closures of the induced holomorphic action of the complexified torus $T_\mathbb{C}$ are convex polytopes. This gives a collection of polytopes (with possibly overlapping interiors) that cover the momentum map image $\Delta$. In \cite{GorMac87} Goresky and MacPherson explained (in the algebraic context of projective varieties) how one can reconstruct the $T$-orbit space (as topological space) from this collection of polytopes and certain additional invariants involving the symplectic reduced spaces. To do so, they considered a natural partition of $\Delta$ associated to this collection of polytopes (which is, in fact, a stratification with possibly disconnected strata), with the property that the set of semistable points for the shifted momentum map $\mu-x$ is the same for all points $x$ in the same member of this partition. In particular, like for the stratification $\mathcal{S}_\mathrm{Ham}(\Delta)$ in Theorem \ref{thm:momimstrat}, the symplectic reduced spaces at any two such points $x$ are homeomorphic. In addition to proving Theorem \ref{thm:folklore:intro} and Theorem \ref{thm:main:stratcomparison}, we show that in this setting $\mathcal{S}_\mathrm{Ham}(\Delta)$ in fact equals the collection of connected components of this partition, thus giving another characterization of $\mathcal{S}_\mathrm{Ham}(\Delta)$ in the K\"{a}hler case. Moreover, we point out that if the K\"{a}hler manifold is connected, then the members of this partition are in fact certain intersections of relative interiors of the $T_\mathbb{C}$-orbit closure polytopes. This leads to the following conclusion for $\mathcal{S}_\mathrm{Ham}(\Delta)$.
\begin{proposition}\label{prop:orbitclosurepolytopesintersection} If the K\"{a}hler manifold above is connected, then for any $x\in \Delta$ the stratum of $\mathcal{S}_\mathrm{Ham}(\Delta)$ through $x$ equals the intersection of all the relative interiors $\mathring{P}$ of $T_\mathbb{C}$-orbit closure polytopes $P$ for which $x\in \mathring{P}$.
\end{proposition}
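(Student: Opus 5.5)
We would prove Proposition \ref{prop:orbitclosurepolytopesintersection} by combining Theorem \ref{thm:main:stratcomparison} with a direct comparison between chambers and orbit closure polytopes. By Theorem \ref{thm:main:stratcomparison}, the stratum through $x$ is the relative interior $\mathring{F}_x$ of the unique face $F_x$ of the chamber complex with $x\in\mathring{F}_x$. Let $\mathcal{P}_x$ be the set of $T_\mathbb{C}$-orbit closure polytopes $P$ with $x\in\mathring{P}$, and put $I_x:=\bigcap_{P\in\mathcal{P}_x}\mathring{P}$. We must show $\mathring{F}_x=I_x$.

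The first step is a structural fact relating orbit closure polytopes to chambers. For $p\in S$ we have $\mu(\overline{T_\mathbb{C}p})=P_p$, which is the convex hull of the $\mu$-images of the $T$-fixed points in $\overline{T_\mathbb{C}p}$. Its affine span is $\mu(p)+\mathfrak{t}_p^{\circ}$, where $\mathfrak{t}_p$ is the stabilizer algebra. Moreover $\mu$ restricted to the orbit $T_\mathbb{C}p$ is a diffeomorphism onto $\mathring{P}_p$ modulo $T$ (Atiyah). Consequently $\mathring{P}_p$ is exactly the $\mu$-image of the points whose stabilizer algebra equals $\mathfrak{t}_p$ in the stratum $T_\mathbb{C}p$. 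We then use the local description of $\mathcal{S}_\mathrm{Ham}(\Delta)$ recalled in Section \ref{sec:bckgr:hamstrat}: near $x$, the strata are intersections of images of orbit type strata. Every $p\in\mu^{-1}(x)$ lies in some orbit $T_\mathbb{C}p$ whose image $\mathring{P}_p$ contains $x$ and is locally (near $x$) the image of the corresponding orbit type piece. This yields the following claim: a germ at $x$ of the stratum equals the germ of $\bigcap_{p\in\mu^{-1}(x)}\mathring{P}_p$.

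The second step shows $\mathring{F}_x\subseteq I_x$. Let $P\in\mathcal{P}_x$ and $y\in\mathring{F}_x$. Since $\mathring{F}_x$ is convex and relatively open, the segment from $x$ slightly beyond $y$ stays in $\mathring{F}_x$. The key lemma is that each $\mathring{P}$ is a union of relative interiors of chamber faces, i.e.\ a union of strata. We prove it as follows. $P$ is the convex hull of images of fixed points, and its faces are images of $T_\mathbb{C}$-orbits in $\partial\overline{T_\mathbb{C}p}$. Hence $\partial P$ lies in the union of images of lower orbit type strata, i.e.\ the $\mu$-images of points whose stabilizer algebra is strictly larger than $\mathfrak{t}_p$ within the relevant slice. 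These images are unions of closed chamber faces by Theorem \ref{thm:folklore:intro} and the construction of $\mathcal{S}_\mathrm{Ham}$. So a stratum meeting $\mathring{P}$ cannot meet $\partial P$ without being contained in it, and since strata are connected, $\mathring{F}_x\subseteq\mathring{P}$.

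The third step, the reverse inclusion $I_x\subseteq\mathring{F}_x$, is where connectedness of the manifold is used, and we expect it to be the main obstacle. Since $I_x$ is convex and relatively open, a union of strata by the second step, and contains $x$, it suffices to show that $I_x$ meets no stratum other than $\mathring{F}_x$. Suppose $y\in I_x\setminus\mathring{F}_x$. Then the segment $[x,y]$ leaves $\overline{F_x}$-adjacent strata, so near $x$ it enters a different stratum. By the local germ claim of the first step, there is some $p\in\mu^{-1}(x)$ with $x\in\mathring{P}_p$ such that $\mathring{P}_p$ does not contain points of $[x,y]$ near $x$, contradicting $y\in\mathring{P}_p$ by convexity. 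The delicate part is making the germ claim precise. Specifically, we need the local image of each orbit type stratum through $x$ to be exactly the germ of $\mathring{P}_p$, rather than of a union of several such polytopes. Connectedness enters here, via the Kähler local normal form (Sjamaar's holomorphic slice theorem) and connectedness of $\mu^{-1}(x)$, to ensure the local model at $x$ is the linear torus representation. In that linear model, orbit closure cones play the role of the $P_p$, and we verify the statement directly by the linear (GIT) description of chambers as intersections of the relevant cone interiors.
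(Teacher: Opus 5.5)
\textbf{There is a genuine gap in your Step 3, which is where the whole content of the proposition lies.}

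The claim that $[x,y]$ ``near $x$ enters a different stratum'' is false, and it contradicts your own germ claim from Step 1. Both $I_x$ and $\mathring{F}_x$ are convex and relatively open, and both have affine hull $x+\bigcap_{p\in\mu^{-1}(x)}\mathfrak{t}_p^0$. This is just \eqref{eqn:tgntsp:GorMac}, which holds for any compact K\"ahler $S$, connected or not. Hence every segment $[x,y]\subset I_x$ starts out inside $\mathring{F}_x$.

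If $I_x\neq\mathring{F}_x$, the discrepancy appears only at the first point $z$ where $[x,y]$ leaves $\mathring{F}_x$. Such a $z$ lies in $I_x$, possibly far from $x$, and lies in a stratum of strictly smaller dimension. So no analysis localized at $x$ (normal forms, connectedness of $\mu^{-1}(x)$) can close the argument, and you have misplaced where connectedness is needed.

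Here is a concrete example. Let $S$ be the disjoint union of two copies of $\mathbb{C}P^1$ with circle rotations whose momentum images are $[0,3]$ and $[1,2]$. For $x\in\,]0,1[$ the fiber over $x$ is one free orbit and your germ claim holds. Yet $I_x=\,]0,3[$, while the stratum is $]0,1[$. The failure sits at $z=1$: there the fiber contains a fixed point of the second sphere that lies in no $T_\mathbb{C}$-orbit closure of a point over $x$.

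\textbf{What is missing is a global fact at such $z$:} every $q\in\mu^{-1}(z)$ lies in $\overline{T_\mathbb{C}\cdot p}$ for some $p\in\mu^{-1}(x)$. Granting it, the contradiction follows:
\begin{itemize}
\item[(1)] Since $z\in I_x\subseteq\mathring{P}_p$, Atiyah's identity $\mu^{-1}(\mathring{P}_p)\cap\overline{T_\mathbb{C}\cdot p}=T_\mathbb{C}\cdot p$ gives $q\in T_\mathbb{C}\cdot p$.
\item[(2)] Hence $\mathfrak{t}_q^0=\mathfrak{t}_p^0\supseteq T_x\mathring{F}_x$.
\item[(3)] Intersecting over all $q\in\mu^{-1}(z)$, the stratum through $z$ has dimension at least $\dim\mathring{F}_x$, contradicting the frontier condition.
\end{itemize}

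The paper obtains this fact in the proof of Proposition~\ref{prop:partitionmember:intersection}, via the Kempf--Ness homeomorphism. Since $z$ lies in the closure of the member of $\mathcal{P}_{T_\mathbb{C}}(\Delta)$ through $x$, one has $X^{\mathrm{ss}}_x\subseteq X^{\mathrm{ss}}_z$. The induced specialization map $\mu^{-1}(x)/T\to\mu^{-1}(z)/T$ is then surjective, because its source is compact and $X^{\mathrm{ss}}_x$ is dense in $S$. That density (Kirwan) is exactly where connectedness of $S$ is used, and it fails in the example above.

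\textbf{A smaller issue in Step 2.} Your conclusion that each $\mathring{P}$ is a union of strata is true, but the stated reason is not valid. Images of orbit type strata with larger isotropy are not unions of closed chamber faces, and they can lie inside $\mathring{P}$. In Example~\ref{example:hamstrat}, $\{0\}\times\,]-1,1[$ is such an image, and it lies in the interior of $\Delta=P_p$ for generic $p$.

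A correct argument runs as follows. Suppose a stratum $\Sigma$ meets $\mathring{P}$ and $y\in\Sigma\cap P$. Then $\mathrm{aff}(\Sigma)=y+T_y\Sigma\subseteq y+\mathfrak{t}_q^0$ for $q\in\overline{T_\mathbb{C}\cdot p}\cap\mu^{-1}(y)$. The right-hand side is the affine hull of the face of $P$ whose relative interior contains $y$. That hull meets $P$ only in that face, but it also meets $\mathring{P}$, so the face is $P$ itself. Therefore $\Sigma\cap P=\Sigma\cap\mathring{P}$. This set is closed in $\Sigma$, and it is open because $\mathrm{aff}(\Sigma)\subseteq\mathrm{aff}(P)$, so by connectedness $\Sigma\subseteq\mathring{P}$.
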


\textbf{\underline{Outline of the paper:}} 
In the main body of the paper we, in fact, state and prove Theorem \ref{thm:momim:locpoldec} and Theorem \ref{thm:momim:stratcomparison}, which are somewhat more general versions of Theorem \ref{thm:folklore:intro} and Theorem \ref{thm:main:stratcomparison} that also apply to Hamiltonian torus actions on non-compact connected symplectic manifolds, as long as the momentum map is proper as map into some convex set. To prove these, in Section \ref{sec:affstrat:chamberdec} we first explain a general bijective correspondence between  locally polyhedral chamber decompositions of certain (such as closed or convex) subsets of real finite-dimensional vector spaces on one hand, and certain affine stratifications of such subsets on the other, in which the faces of chambers are exactly the closures of the strata of the corresponding stratification (see Theorem \ref{thm:affstrat:chamberdec:correspondence}). What characterizes the stratifications in this correspondence is that they (and the subsets that they stratify) are locally modeled on fans (in the sense of Definition \ref{def:affstrat:locfan}). It therefore gives a characterization of locally polyhedral chamber decompositions that is more local in nature and reduces the proofs of Theorem \ref{thm:momim:locpoldec} and Theorem \ref{thm:momim:stratcomparison} to showing that the momentum map image $\Delta$ with the stratification $\mathcal{S}_\mathrm{Ham}(\Delta)$ is locally modelled on fans. This is done in Section \ref{sec:chamberdecmomim}, by using the Marle-Guillemin-Sternberg normal form theorem to show that, near a given point $x\in \Delta$, the stratification $\mathcal{S}_\mathrm{Ham}(\Delta)$ is modelled on a fan $\mathcal{F}_x$ constructed from the weights of the symplectic normal representations at points in $p\in \mu^{-1}(x)$ by combining the Gelfand-Kapranov-Zelevinsky fans of the vector configurations formed by these weights. At the end of the paper, in Section \ref{sec:orbitclosures}, we specialize to the K\"{a}hler case, compare the stratification in \cite{GorMac87} to $\mathcal{S}_\mathrm{Ham}(\Delta)$ and  address Proposition \ref{prop:orbitclosurepolytopesintersection}. \\

\textbf{\underline{Acknowledgements:}} I am grateful to Eckhard Meinrenken and Daniele Sepe for their encouragement to write this paper, and I am particularly indebted to Eckhard for suggesting the key idea behind the proof of Theorem \ref{thm:convexity:strata:general}. AI (a free version) was used to experiment with Proposition \ref{prop:partitionmember:intersection}, but did not contribute to proofs, other mathematical content, or writing in this paper. This research was supported by NSERC grant RGPIN-2024-05764. 

\section{Affine stratifications and chamber decompositions}\label{sec:affstrat:chamberdec} 
Let $V$ be a finite-dimensional real vector space. In this section we formalize what we mean by a locally polyhedral chamber decomposition of a subset $X\subset V$ and, for those $X$ that are closed in a convex subset of $V$ (such as $X\subset V$ that are closed or convex in $V$), we characterize such decompositions in terms of stratifications by explaining and proving the following. 
\begin{theorem}\label{thm:affstrat:chamberdec:correspondence}
    For any $X
    \subset V$ that is closed in a convex subset of $V$, there is a canonical bijection
\[ 
\left\{\\ \txt{Stratifications $\mathcal{S}$ of $X$ for which \\ $(X,\mathcal{S})$ is locally modelled on fans \,}\right\}\ 
\tilde{\longleftrightarrow}\ 
\left\{\\ \txt{Locally polyhedral \\ chamber decompositions of $X$\,}\right\}
\] 
that associates to such a stratification $\mathcal{S}$ the decomposition consisting of the closures in $X$ of the strata of $\mathcal{S}$ and to such a decomposition of $X$ the stratification in Proposition \ref{prop:locpoldec} below. 
\end{theorem}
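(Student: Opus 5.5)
The proof has two directions, one for each of the maps in the statement, followed by a check that the two maps are mutually inverse. Since the precise definitions (locally modelled on fans, Definition \ref{def:affstrat:locfan}; locally polyhedral chamber decomposition; Proposition \ref{prop:locpoldec}) are only given below, I fix the intended meanings. $(X,\mathcal{S})$ is \emph{locally modelled on fans} if every $x\in X$ has an open neighbourhood $U$ and a complete-in-its-support fan $\mathcal{F}_x$ in $V$, with apex at $x$, such that $X\cap U=|\mathcal{F}_x|\cap U$ and the strata meeting $U$ are exactly the sets $U\cap \mathring{C}$, $C\in\mathcal{F}_x$, up to taking connected components. A \emph{locally polyhedral chamber decomposition} is a locally finite cover of $X$ by closed convex sets (chambers) that are locally polyhedra. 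It must also satisfy two further conditions: the faces of the chambers meet in common faces, and every point has a neighbourhood on which the chambers and their faces look like the cones of a fan.

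\textbf{From stratifications to decompositions.} Let $\mathcal{S}$ be given. First, each stratum $\sigma$ is convex. Locally near any point of $\overline{\sigma}$, $\sigma$ is a union of relative interiors of cones of a single dimension $\dim\sigma$; by the frontier condition and connectedness, these must be the relative interior of one cone. So $\sigma$ is locally convex and connected, and lies in its affine hull $A_\sigma$ as a relatively open subset. I then prove convexity by the usual Tietze–Nakajima type argument: a connected, locally convex set that is closed in a convex set is convex. To reduce to this, I apply it to $\overline{\sigma}\subset X$. Here the hypothesis that $X$ is closed in a convex set $K$ is used: $\overline{\sigma}$ is closed in $K\cap A_\sigma$ and locally convex at each of its points, because near such points it coincides with a cone of the local fan. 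This is the step I expect to be the main obstacle. The local convexity at boundary points of $\overline{\sigma}$ requires knowing that $\overline{\sigma}$ near $y$ equals a single cone of $\mathcal{F}_y$, not a union of several cones. Showing this uses that $\sigma$ is connected and that the cones of $\mathcal{F}_y$ of dimension $\dim\sigma$ containing $y$ meet only along lower-dimensional faces.

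Given convexity, $\overline{\sigma}$ is a closed convex set that is locally a polyhedral cone, hence a locally polyhedral convex set. Its faces near $y$ are the faces of the local cone, which are closures of strata, by the frontier property of the fan. So the closures of strata are closed under taking faces, pairwise intersect in unions of common faces, and form a locally finite family. Maximal ones are the chambers, and the faces of the chambers are precisely the closures of strata.

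\textbf{From decompositions to stratifications, and the inverse property.} Conversely, for a decomposition $\mathcal{C}$, I take the relative interiors of all faces of all chambers, as in Proposition \ref{prop:locpoldec}. Near $x$, local finiteness and local polyhedrality let me translate the finitely many faces through $x$ to cones at $x$. The face-intersection condition makes these cones into a fan $\mathcal{F}_x$, so $(X,\mathcal{S})$ is locally modelled on fans. That the two assignments are mutually inverse is essentially tautological. A closed convex locally polyhedral set is the closure of its relative interior, and relative interiors of distinct faces are disjoint. Hence the closure of $\mathring{F}$ recovers $F$. In the other direction, the strata of $\mathcal{S}$ are the relative interiors of their closures, because each stratum is convex and relatively open in its affine hull. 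Canonicity is clear since no choices are made.
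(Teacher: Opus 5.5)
\textbf{There is a genuine gap, at exactly the step you flag as the main obstacle.} The justification you sketch for it cannot work.

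You need that near each $y\in\overline{\sigma}$ the stratum $\sigma$ is the relative interior of a \emph{single} cone of $\mathcal{F}_y$. This is what would give local convexity of $\overline{\sigma}$, which is the input to Tietze--Nakajima. It is also what you tacitly use to conclude that $\sigma$ is the relative interior of $\overline{\sigma}$.

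This property does not follow from connectedness of $\sigma$, the frontier condition, and the fact that the top-dimensional cones of $\mathcal{F}_y$ meet along lower-dimensional faces. Here is a counterexample.
\begin{itemize}
\item Let $R\subset\mathbb{R}^2$ be the closed segment from $(0,1)$ to $(-1,0)$, and put $X=\mathbb{R}^2\setminus(R\cup -R)$.
\item Stratify $X$ by: the origin; the four open segments $\{0\}\times\,]0,1[$, $\{0\}\times\,]-1,0[$, $]0,1[\,\times\{0\}$, $]-1,0[\,\times\{0\}$; the two open triangles bounded by the axes and $R$, resp.\ $-R$; and the complement $\sigma$ of the two closed triangles.
\item The closed triangles touch only at the origin, so $\sigma$ is connected.
\item The stratification is affine and locally modelled on fans; at the origin the fan is the one formed by the four coordinate quadrants.
\item Yet near the origin $\overline{\sigma}$ is the union of the closed first and third quadrants, which is not convex.
\end{itemize}
Every local ingredient you invoke holds in this example. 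So only the global hypothesis that $X$ is closed in a convex set can exclude it. Tietze--Nakajima cannot supply that, since local convexity is its hypothesis, not its conclusion.

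\textbf{The missing idea} is the paper's direct global argument for convexity of strata (Theorem \ref{thm:convexity:strata:general}). It comes \emph{before} any local description of $\overline{\Sigma}$.
\begin{itemize}
\item Join $y_0,y_1\in\Sigma$ by a path $\gamma$ in $\Sigma$ and sweep the segments $\Gamma(s,t)=(1-s)y_0+s\gamma(t)$.
\item Let $t_0$ be the first time a segment leaves $\Sigma$, and $s_0$ the first exit parameter on that segment. The point $x_0=\Gamma(s_0,t_0)$ lies in $X$ precisely because $X$ is closed in a convex set.
\item Take a convex neighbourhood $U$ of $x_0$ as in Definition \ref{def:affstrat:locfan}. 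The left, bottom and right sides of a small rectangle $[s_0-\delta,s_0+\delta]\times[t_0-\delta,t_0]$, minus the corner $(s_0+\delta,t_0)$, map into $\Sigma\cap U$.
\item Hence $\Gamma(s_0+\delta,t)$ for $t<t_0$ lies in the same component $(x_0+\mathring{\sigma})\cap U$ as the points of $\Gamma(\cdot,t_0)$ just before $x_0$. Letting $t\to t_0$ gives $\Gamma(s_0+\delta,t_0)\in x_0+\sigma$.
\item But $x_0\notin\Sigma$, so some defining inequality of $\sigma$ vanishes at $x_0$. Being affine along the segment and positive before $x_0$, it is negative past $x_0$, which is a contradiction.
\end{itemize}
Only afterwards does the paper deduce that $\overline{\Sigma}$ is locally a single cone (Lemma \ref{lemma:strataclosures:locfanstrat}): a convex $\Sigma$ meets a convex $U$ in a connected set.

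\textbf{A second, smaller gap.} Your ``pairwise intersect in unions of common faces'' is weaker than condition (ii) of Definition \ref{def:locallypolyhedralchamberdecomposition}, which asks for the closure of a \emph{single} common stratum. Getting this requires connectedness of $\overline{\Sigma}_1\cap\overline{\Sigma}_2$ together with Lemma \ref{lemma:locpolstrat:regpartconnected}. Connectedness holds because closures in $X$ of convex strata are convex, again since $X$ is closed in a convex set.

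Finally, your working definition of a chamber decomposition already assumes convex chambers and a local fan picture; the paper's definition does not. So the converse direction is not tautological: it needs the actual argument of Proposition \ref{prop:locpoldec}.
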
 
The proof that we give of this will in particular show the following. 
\begin{theorem}\label{thm:convexity:strata:general}
    For any stratification $\mathcal{S}$ as in the correspondence in Theorem \ref{thm:affstrat:chamberdec:correspondence}, the strata are convex. 
\end{theorem}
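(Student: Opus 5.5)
We must show that if $(X,\mathcal{S})$ is locally modelled on fans, with $X$ closed in a convex set $C\subset V$, then every stratum $\sigma\in\mathcal{S}$ is convex. I take "locally modelled on fans" to mean the following. Each $x\in X$ has a neighbourhood $U$ such that $(X\cap U,\mathcal{S}|_{U})$ coincides with $(x+|\mathcal{F}_x|)\cap U$, stratified by the relative interiors of the translated cones $x+\tau$, $\tau\in\mathcal{F}_x$, for some fan $\mathcal{F}_x$ in $V$. In particular every stratum is an affine stratum: a connected, locally closed subset that is locally an open subset of an affine subspace, namely $x+\mathrm{span}(\tau)$.

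The key idea is a local-to-global convexity argument of Tietze--Nakajima type. The plan is to show that each stratum $\sigma$ is \emph{locally convex}, and that its closure $\overline\sigma$ in $X$ is connected and closed in a convex set. One can then conclude, as in Tietze's theorem, that a connected set which is closed in a convex set and locally convex is convex.

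\textbf{Step 1: local convexity of the closure.} Fix $y\in\overline{\sigma}$ and take a fan chart at $y$. Near $y$, the set $\sigma$ is a union of relative interiors of cones $y+\mathring\tau$. Here I would use that strata are connected, together with the frontier condition, to argue that near $y$ exactly one cone $\tau\in\mathcal{F}_y$ contributes. This is the subtle point: a priori $\sigma$ could re-enter the neighbourhood along a different cone. I would rule this out by observing that all such cones must have the same span $W_\sigma$, since $\sigma$ is an affine stratum with a fixed direction. Distinct cones in a fan with the same span have disjoint relative interiors. Convexity of $y+\tau$, together with connectedness of $\sigma$ inside the affine space $x+W_\sigma$ (where $\sigma$ is open), then forces a single cone. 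Granting this, $\overline\sigma\cap U=(y+\tau)\cap U$ is convex for a small convex neighbourhood $U$. So $\overline\sigma$ is locally convex, and it is closed in the convex set $C\cap(x+W_\sigma)$.

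\textbf{Step 2: Tietze's theorem.} A connected subset $K$ of $V$ that is closed in a convex set and locally convex is convex. The proof runs as follows. For $a,b\in K$, take a polygonal path in $K$ from $a$ to $b$ with a minimal number of segments, and shorten it using local convexity at the vertices. Compactness of the path gives a uniform convexity radius along it. A minimality argument then shows the path is a single segment.

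I would carry this out for $K=\overline{\sigma}$, which is connected because $\sigma$ is. Closedness in $C$ replaces the compactness normally used in Tietze's theorem. The needed segments between points of $K$ stay inside $C$, and limits of points of $K$ inside $C$ remain in $K$. So the usual argument goes through with $V$ replaced by $C$.

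\textbf{Step 3: from the closure to the stratum.} Step 1 shows that near each point, $\sigma$ is the relative interior of the convex set $\overline\sigma$ locally: it is $y+\mathring\tau$ inside $y+\tau$. Hence $\sigma$ is the relative interior of the convex set $\overline\sigma$, and is therefore convex.

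I expect the main obstacle to be Step 1. There one must show that a stratum approaches a point of its closure through only one cone of the local fan, which amounts to excluding "self-adjacent" strata. I would handle this via connectedness of $\sigma$ in its affine span, combined with the fact that $\mathcal{S}$ is locally finite. The Tietze-type argument in Step 2 is standard, once the hypothesis that $X$ is closed in a convex set is used to keep the shortened paths inside $X$.
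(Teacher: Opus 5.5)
There is a genuine gap, and it sits exactly where you suspected: Step~1. Your argument that only one cone of $\mathcal{F}_y$ contributes to $\sigma$ near $y\in\overline{\sigma}$ is purely local. It uses a common span $W_\sigma$, disjoint relative interiors, connectedness of $\sigma$ and local finiteness, but never the hypothesis that $X$ is closed in a convex set. Without that hypothesis the claim is false.

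Take $X=\mathbb{R}^2\setminus\{(0,1)\}$ with two strata: $W=\{0\}\times\,]-\infty,1[$ and $\sigma=\mathbb{R}^2\setminus(\{0\}\times\,]-\infty,1])$. This is a locally finite stratification by connected affine strata, and it is locally modelled on fans. At points of $W$ the fan consists of the vertical line and the two closed half-planes; at points of $\sigma$ it is $\{\mathbb{R}^2\}$. The stratum $\sigma$ is connected and open in its span. Yet near every point of $W$ it meets a small disc in two open half-discs, i.e.\ in two distinct cones with the same span. Accordingly $\sigma$ is not the relative interior of $\overline{\sigma}=X$, so Step~3 fails as well, and $\sigma$ is not convex. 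Connectedness of $\sigma$ is a global property, while $\sigma\cap U$ can be disconnected.

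Under the actual hypotheses the single-cone property does hold, but it already contains the whole theorem. Once $\sigma$ is convex, $\sigma\cap U$ is convex, hence connected, hence a single relatively open cone; this is how the paper derives it afterwards, in Lemma~\ref{lemma:strataclosures:locfanstrat}. Steps~2 and~3 are fine given Step~1 (Step~2 is the Tietze--Nakajima theorem for sets closed in a convex set, as in \cite{BjKa10}). They only repackage what Step~1 assumes.

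What is missing is an argument that uses closedness of $X$ in a convex set $C$ at a point of $\overline{\sigma}\setminus\sigma$. The paper does this directly, with no Tietze step.
\begin{itemize}
\item[(1)] Join $y_0,y_1\in\sigma$ by a path $\gamma$ in $\sigma$ and put $\Gamma(s,t)=(1-s)y_0+s\gamma(t)$. Suppose some segment $\Gamma([0,1]\times\{t\})$ leaves $\sigma$.
\item[(2)] Since $\sigma$ is open in its affine hull, which contains the image of $\Gamma$, the set of such $t$ is closed. So there is a first one, $t_0>0$, and on that segment a first exit parameter $s_0\in\,]0,1[$.
\item[(3)] The point $x_0=\Gamma(s_0,t_0)$ is a limit of points of $\sigma$ and lies in $C$, hence in $X$. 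So there is a fan chart at $x_0$.
\item[(4)] Consider a small rectangle $[s_0-\delta,s_0+\delta]\times[t_0-\delta,t_0]$. Its left and bottom edges, and its right edge minus the top corner, lie in $\sigma\cap U$ and are connected to $\Gamma([s_0-\delta,s_0[\times\{t_0\})$. So they all lie in one component $(x_0+\mathring{\tau})\cap U$, and letting $t\to t_0$ gives $\Gamma(s_0+\delta,t_0)\in x_0+\tau$.
\item[(5)] On the other hand, $\tau$ has at least one defining functional $\alpha_i$, since $x_0\notin\sigma$. Along the segment at $t_0$ it is affine, positive for $s<s_0$ and zero at $s_0$, hence negative for $s>s_0$. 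This is a contradiction.
\end{itemize}
This mechanism, in which the exit point lies in $X$ and sees only one cone, is what your Step~1 would have to import. Once you have it, the detour through $\overline{\sigma}$ and Tietze is unnecessary.
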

\subsection{Stratifications of locally polyhedral chamber decompositions}\label{sec:stratoflocpoldec}
In this subsection we explain both sides of the correspondence in Theorem \ref{thm:affstrat:chamberdec:correspondence}, as well as the map from right to left. For right-hand side, first recall that a \textbf{polyhedral complex} in $V$ is a collection $\mathcal{P}$ of (non-empty) polyhedral sets in $V$ such that 
\begin{itemize}
\item[(i)] the faces of any $P\in \mathcal{P}$ also belong to $\mathcal{P}$,
\item[(ii)] for any $P,Q\in \mathcal{P}$ that intersect, $P\cap Q$ is a face of both $P$ and $Q$.
\end{itemize}
The \textbf{support} of a polyhedral complex $\mathcal{P}$ is the subset 
\[
\textrm{supp}(\mathcal{P}):=\bigcup_{P\in \mathcal{P}} P\subset V
\]
given by the union of all members of $\mathcal{P}$. By a \textbf{polyhedral chamber decomposition of $X$} we will mean a locally finite polyhedral complex with support equal to $X$. A locally polyhedral chamber decomposition will be defined as a similar collection that consists of sets that are locally polyhedral (rather than polyhedral), in the following sense. 
\begin{definition}\label{def:locpolset} A subset $P$ of $V$ is called \textbf{locally polyhedral} if at every $x\in P$ the germ of $P$ at $x$ is equal to the germ of $x+\sigma_x$ at $x$, for some polyhedral cone $\sigma_x$ in $V$. That is, there is an open neighbourhood $U$ of $x$ in $V$ such that $P\cap U=(x+\sigma_x)\cap U$. 
\end{definition} Given such $P$, for each $x\in P$ the cone $\sigma_x$ in Definition \ref{def:locpolset} is uniquely determined. We will denote it as $\sigma_x(P)$. Consider the equivalence relation
\begin{equation}\label{eqn:eqrel:strat:locpolset}
x\sim y \iff F_x(P)=F_y(P),
\end{equation}
where $F_x(P)$ and $F_y(P)$ are the  minimal faces of $\sigma_x(P)$ and $\sigma_y(P)$. 
\begin{proposition}\label{prop:locpolstrat} Any locally polyhedral subset $P\subset V$ comes with a stratification $\mathcal{S}(P)$ consisting of the connected components of the members of the partition of $P$ given by the equivalence relation \eqref{eqn:eqrel:strat:locpolset}. 
\end{proposition}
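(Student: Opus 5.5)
We need to show that the partition of $P$ by the relation \eqref{eqn:eqrel:strat:locpolset}, refined into connected components, is a stratification. That means a locally finite partition into locally closed connected submanifolds satisfying the frontier condition. The strategy is to read everything off from the local model at a single point $x\in P$. Fix an open neighbourhood $U$ of $x$ with $P\cap U=(x+\sigma_x)\cap U$, and shrink $U$ to an open ball centred at $x$.

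\textbf{Step 1: local description of the relation.} For $y\in (x+\sigma_x)\cap U$, write $y-x$ in the relative interior of a unique face $\tau$ of $\sigma_x$. We claim that
\[
\sigma_y(P)=\sigma_x+\mathrm{span}(\tau) \quad\text{and}\quad F_y(P)=\mathrm{span}(\tau)+F_x(P).
\]
The reason is that near a relative-interior point of $\tau$, the cone $\sigma_x$ coincides with its tangent cone there, which is $\sigma_x-\mathbb{R}_{\geq0}(y-x)=\sigma_x+\mathrm{span}(\tau)$. This is a standard fact about polyhedral cones, proved via an inequality description: only the inequalities that are active on $\tau$ survive. The minimal face of $\sigma_x+\mathrm{span}(\tau)$ is its lineality space. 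Since $F_x(P)$ is the lineality space of $\sigma_x$ and is contained in every face, this lineality space equals $\mathrm{span}(\tau)$.

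The map $\tau\mapsto \mathrm{span}(\tau)$ is injective on faces of a cone, because $\tau=\sigma_x\cap\mathrm{span}(\tau)$. Hence, within $P\cap U$, two points $y,y'$ are equivalent if and only if $y-x$ and $y'-x$ lie in the relative interior of the same face $\tau$.

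\textbf{Step 2: local structure of the partition.} By Step 1, the equivalence classes meet $P\cap U$ in the sets $(x+\mathrm{relint}\,\tau)\cap U$, for $\tau$ ranging over the faces of $\sigma_x$. A subtlety is that a global equivalence class might contain points of $U$ with different $\tau$'s. Step 1 rules this out, because equivalence is determined by $F_y(P)$ and we showed $F_y(P)$ determines $\tau$. Each set $(x+\mathrm{relint}\,\tau)\cap U$ is convex, since $U$ is a ball. It is therefore connected, and it is an open subset of the affine space $x+\mathrm{span}(\tau)$.

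It follows that every member of the partition is locally, near each of its points, an open piece of an affine subspace. So each connected component is a connected embedded submanifold, and it is locally closed, because near $x$ it is a relatively open piece of the closed set $x+\tau$. Local finiteness is immediate, since only finitely many faces $\tau$ exist near $x$; the components met by $U$ are exactly those containing the sets $(x+\mathrm{relint}\,\tau)\cap U$. Because these local pieces are connected, each of them lies in a single component.

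\textbf{Step 3: frontier condition.} Let $S,S'$ be components with $S'\cap\overline{S}\neq\emptyset$, and pick $x\in S'\cap \overline S$. Near $x$, the component $S$ equals $(x+\mathrm{relint}\,\tau)\cap U$ for some face $\tau$, and $S'$ equals $(x+\mathrm{relint}\,\{\text{lineality}\})\cap U$, the piece through $x$. Since every face contains the lineality space, $S'\cap U\subset\overline{S}$.

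This shows that $S'\cap\overline{S}$ is open in $S'$. It is closed in $S'$ by definition, and $S'$ is connected, so $S'\subset\overline S$.

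I expect the main obstacle to be the cone-theoretic claim in Step 1, namely the local cone at $y$ and the identification of its minimal face. I would prove it by writing $\sigma_x=\{v:\ell_i(v)\ge0\}$ and noting that $\ell_i(y-x)>0$ for the inactive $i$. Everything after that is bookkeeping with connectedness.
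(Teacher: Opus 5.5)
Your argument is correct. The paper gives no proof of this proposition (it says the result is surely known and leaves it to the reader), so there is no route to compare against. Your local-model argument is the natural way to fill it in: identify $\sigma_y(P)$ with the tangent cone $\sigma_x+\mathrm{span}(\tau)$, deduce $F_y(P)=\mathrm{span}(\tau)$, and use injectivity of $\tau\mapsto\mathrm{span}(\tau)$ to see the partition locally as $\{(x+\mathrm{relint}\,\tau)\cap U\}$.

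Two small points are worth tidying.

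First, in Step 1, the remark that $F_x(P)$ lies in every face only explains why $\mathrm{span}(\tau)+F_x(P)=\mathrm{span}(\tau)$. It does not show that the lineality space of $\sigma_x+\mathrm{span}(\tau)$ is $\mathrm{span}(\tau)$. The actual reason is this. Any $w$ in that lineality space satisfies $\ell_i(w)\geq 0$ and $\ell_i(-w)\geq0$, hence $\ell_i(w)=0$, for every inequality active at $y-x$. The common zero set $W$ of these active $\ell_i$ satisfies $\tau=\sigma_x\cap W$. Moreover $y-x$ is an interior point of $\tau$ relative to $W$, since the inactive inequalities are strict there. Hence $W=\mathrm{span}(\tau)$.

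Second, the frontier condition as the paper uses it includes a dimension drop: $\dim S'<\dim S$ when $S'\neq S$ (see the proof of Proposition \ref{prop:locpoldec}). Your Step 3 only shows $S'\subset\overline{S}$. The dimension statement follows immediately from your local picture. If $\tau$ were the lineality space $L$ of $\sigma_x$, then $x\in(x+L)\cap U\subset S$, forcing $S=S'$. So for $S\neq S'$ you have $\tau\supsetneq L$, and therefore $\dim S=\dim\tau>\dim L=\dim S'$.
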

 As this is surely known, we leave its proof to the reader. 
 \begin{remark}\label{rem:stratification:polyhedron} For a polyhedron $P$ the stratification $\mathcal{S}(P)$ consists of the relative interiors of the (non-empty) faces of $P$. 
 \end{remark} 
 Using the stratification in Proposition \ref{prop:locpolstrat}, we now give a precise definition of a locally polyhedral chamber decomposition. 
\begin{definition}\label{def:locallypolyhedralchamberdecomposition}
    Let $X\subset V$. By a \textbf{locally polyhedral chamber decomposition} of $X$ we mean a locally finite cover $\mathcal{P}$ of $X$ consisting of (non-empty) locally polyhedral subsets of $V$ satisfying:
    \begin{itemize}
\item[(i)] for each $P\in \mathcal{P}$ and $\Sigma\in \mathcal{S}(P)$, the closure of $\Sigma$ in $X$ belongs to $\mathcal{P}$,

\item[(ii)] for any $P,Q\in \mathcal{P}$ that intersect, the intersection $P\cap Q$ is the closure in $X$ of a common stratum $\Sigma\in \mathcal{S}(P)\cap \mathcal{S}(Q)$.
    \end{itemize}
\end{definition} 
In view of Remark \ref{rem:stratification:polyhedron}, this indeed extends the definition of polyhedral chamber decomposition that we gave before. 
\begin{remark}\ 
\begin{itemize}\item[(i)] Because of condition (ii) in Definition \ref{def:locallypolyhedralchamberdecomposition} for $P=Q$, any member of a locally polyhedral chamber decomposition must be connected. 
\item[(ii)] If $X$ is compact, then any locally polyhedral chamber decomposition of $X$ is, in fact, polyhedral and consists of convex polytopes, because any compact and connected locally polyhedral set is a convex polytope.
\end{itemize}
\end{remark}
\begin{remark}\label{rem:maxelts:locpolchambdec} As for polyhedral complexes, any locally polyhedral chamber decomposition $\mathcal{P}$ is determined by its set of maximal elements with respect to inclusion, since $\mathcal{P}$ equals the collection of closures in $X$ of strata of its maximal elements.
\end{remark}
Next, we turn to the left-hand side of the correspondence in Theorem \ref{thm:affstrat:chamberdec:correspondence}. To begin with, recall that a \textbf{fan} is a finite polyhedral complex consisting of polyhedral cones, and that the support of a fan is partitioned by its \textbf{relatively open cones} (meaning the relative interiors of its cones). 

\begin{definition}\label{def:affstrat:locfan} We call a stratified subset $(X,\mathcal{S})$ of $V$ \textbf{locally modelled on fans} if for each $x_0\in X$ there is a fan $\mathcal{F}_{x_0}$ in $V$ and an open neighbourhood $U$ of $x_0$ in $V$ such that the partition of $X\cap U$ by connected components of the subspaces $\Sigma\cap U$, with $\Sigma\in \mathcal{S}$, is equal to the collection
\begin{equation}\label{eqn:localstratafan}
\{(x_0+\mathring{\sigma})\cap U\mid \mathring{\sigma} \text{ a relatively open cone of }\F_{x_0}
\}.
\end{equation} 
\end{definition}
 
\begin{remark}\
\begin{itemize} 
 \item[(i)] For any stratified subset $(X,\mathcal{S})$ of $V$ that is locally modelled on fans and any $x_0\in X$, the fan $\mathcal{F}_{x_0}$ is uniquely determined. Moreover, the germ of $X$ in $V$ at $x_0$ is the germ of $x_0+\mathrm{Supp}(\F_{x_0})$.  
 \item[(ii)] If $(X,\mathcal{S})$ is a stratified subspace of $V$ that is locally modelled on fans, then the opens $U$ in Definition \ref{def:affstrat:locfan} can always be taken to be convex by passing to any smaller convex open neighbourhood.
 \end{itemize}
 \end{remark}
 \begin{example} The stratified subset depicted below, in which $X$ is a square, is not locally modeled on a fan at the midpoint of the left edge of the square. 
 \begin{center}
\begin{tikzpicture} 
\filldraw[blue!25!white] (-1,0) -- (-1,1) -- (0,1) -- (0,0) -- cycle ;

\draw[ultra thick,black] (-1,0) -- (0,0) ;
\draw[ultra thick,black] (-1,0) -- (-1,1) ;
\draw[ultra thick,black] (0,0) -- (0,1) ;
\draw[ultra thick,black] (-1,1) -- (0,1) ;

\filldraw[black] (0,0) circle (2pt) ;
\filldraw[black] (-1,0) circle (2pt) ;
\filldraw[black] (0,1) circle (2pt) ;
\filldraw[black] (-1,1) circle (2pt) ;
\filldraw[black] (-1,0.5) circle (2pt) ;

\end{tikzpicture}
\end{center}
\end{example}
 \begin{example} The stratified subset depicted below, in which $X$ is a rectangle with a closed line segment deleted, is locally modeled on fans.   
 \begin{center}
\begin{tikzpicture} 
\filldraw[blue!25!white] (-1,0) -- (-1,1) -- (1,1) -- (1,0) -- cycle ;

\draw[ultra thick,black] (-1,0) -- (1,0) ;
\draw[ultra thick,black] (-1,0) -- (-1,1) ;
\draw[ultra thick,black] (-1,1) -- (1,1) ;
\draw[ultra thick,black] (0,1) -- (0,0.5) ;
\draw[ultra thick,black] (0,0) -- (0,0.5) ;
\draw[ultra thick,black] (1,0) -- (1,1) ;
\draw[ultra thick,white] (-0.5,0.5)--(0.5,0.5);

\filldraw[black] (0,0) circle (2pt) ;
\filldraw[black] (-1,0) circle (2pt) ;
\filldraw[black] (1,0) circle (2pt) ;
\filldraw[black] (-1,1) circle (2pt) ;
\filldraw[black] (0,1) circle (2pt) ;
\filldraw[black] (1,1) circle (2pt) ;
\end{tikzpicture}
\end{center} 
This shows that, when $X$ is not closed in a convex subset of $V$, $(X,\mathcal{S})$ being locally modeled on fans does not guarantee that the closures in $X$ of its strata form a locally polyhedral chamber decomposition.
 \end{example}

 Stratifications as in Definition \ref{def:affstrat:locfan} are affine, in the following sense. 
 \begin{definition}\label{def:affstrat}
   By an \textbf{affine stratification} of a subset $X\subset V$, we mean a stratification such that each stratum is a relatively open subset of $V$ (meaning that it is open in its affine hull).  
\end{definition} 
The following provides a criterion for subsets of $V$ with affine stratifications to be locally modeled on fans, which will be useful later on. 
 \begin{proposition}\label{prop:affstrat:locmodfan:tangentcrit}
     Suppose that $X\subset V$ and $\mathcal{S}$ is an affine stratification of $X$. Then for any $x_0\in X$, any convex open neighbourhood $U$ of $x_0$ in $V$ and any fan $\mathcal{F}_{x_0}$, the condition that the collection of connected components of the sets $\Sigma\cap U$, with $\Sigma\in \mathcal{S}$, is equal to \eqref{eqn:localstratafan} is equivalent to requiring that $X\cap U=\big(x_0+\mathrm{supp}(\F_{x_0})\big)\cap U$ and that for each $x\in U$ the stratum $\Sigma\in \mathcal{S}$ through $x$ has tangent space
\[
T_x\Sigma=T_x(x_0+\mathring{\sigma}),
\] with $\mathring{\sigma}$ the relatively open cone of $\F_{x_0}$ through $x-x_0$. 
 \end{proposition}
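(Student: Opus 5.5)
We prove the two implications separately. Fix $x_0$, a convex open $U\ni x_0$ and a fan $\F_{x_0}$. Write $\mathcal{C}_U$ for the collection of connected components of the sets $\Sigma\cap U$, $\Sigma\in\mathcal{S}$; it is a partition of $X\cap U$. Write $\mathcal{C}_{\F}$ for the collection \eqref{eqn:localstratafan}; since the relatively open cones partition $\mathrm{supp}(\F_{x_0})$, it is a partition of $(x_0+\mathrm{supp}(\F_{x_0}))\cap U$. Each member $(x_0+\mathring\sigma)\cap U$ is convex, hence connected, because it is an intersection of two convex sets. It is also relatively open in the affine space $x_0+\mathrm{span}(\sigma)$, so its tangent space at each of its points is $\mathrm{span}(\sigma)$.

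\emph{($\Rightarrow$)} Assume $\mathcal{C}_U=\mathcal{C}_{\F}$. Equality of the unions of the two partitions gives $X\cap U=(x_0+\mathrm{supp}(\F_{x_0}))\cap U$. Let $x\in X\cap U$ and let $\Sigma$ be the stratum through $x$. The component $C$ of $\Sigma\cap U$ containing $x$ equals $(x_0+\mathring\sigma)\cap U$, where $\mathring\sigma$ is the relatively open cone with $x-x_0\in\mathring\sigma$. Since $C$ is open in $\Sigma$, we get $T_x\Sigma=T_xC=\mathrm{span}(\sigma)=T_x(x_0+\mathring\sigma)$.

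\emph{($\Leftarrow$)} This is the substantive direction. Assume the support and tangent-space conditions. Fix $\Sigma\in\mathcal{S}$ and a cone $\sigma$ with $(x_0+\mathring\sigma)\cap U\neq\emptyset$.

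\begin{itemize}
\item[(a)] We show $\Sigma\cap U$ is a union of members of $\mathcal{C}_{\F}$. Since $\Sigma$ is affine, it is an open subset of its affine hull $A_\Sigma$, so $T_x\Sigma=\vec A_\Sigma$, the direction space of $A_\Sigma$, at every point. The tangent condition forces $\vec A_\Sigma=\mathrm{span}(\sigma)$ whenever $\Sigma$ meets $(x_0+\mathring\sigma)\cap U$.
\item[(b)] Let $D=(x_0+\mathring\sigma)\cap U$, which is connected. Then $D\cap \Sigma$ is open in $D$: both $\Sigma$ and $D$ are open in the same affine space $A_\Sigma=x_0+\mathrm{span}(\sigma)$ near any common point. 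Here $A_\Sigma$ passes through that common point and has the same direction as $x_0+\mathrm{span}(\sigma)$.
\item[(c)] The sets $D\cap\Sigma'$, for the strata $\Sigma'$ meeting $D$, form a partition of $D$ into open pieces, because $D\subset X$. Since $D$ is connected, $D$ lies in a single stratum.
\end{itemize}

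Hence every member of $\mathcal{C}_{\F}$ is a connected subset of some $\Sigma\cap U$, so it lies in a single member of $\mathcal{C}_U$. Conversely, each component $C$ of $\Sigma\cap U$ is a union of members of $\mathcal{C}_{\F}$, and each such member is open in $A_\Sigma\cap U$ by (b). Distinct members lying in $C$ correspond to distinct cones, all with the same span $\mathrm{span}(\sigma)$. Two relatively open cones of a fan with the same span are disjoint open subsets of that span. So $C$ is a disjoint union of sets that are open in $C$, and connectedness of $C$ forces $C$ to be a single member. This gives $\mathcal{C}_U=\mathcal{C}_{\F}$.

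\textbf{Main obstacle.} The delicate point is step (b): we need that a stratum $\Sigma$, which is open only in its own affine hull, is open in $D$. This uses that $\Sigma$ and $x_0+\mathring\sigma$ have parallel affine hulls through a common point, hence the same hull. The closing argument also relies on the fact that relatively open cones of a fan are pairwise disjoint. Convexity of $U$ is used only to guarantee that the pieces $(x_0+\mathring\sigma)\cap U$ are connected.
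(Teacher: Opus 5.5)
Your proof is correct and is essentially the paper's argument. The paper simply invokes \cite[Lemma 6]{CrMe18} (two partitions into connected subsets whose members have the same germ at every point must coincide) together with the fact that the germ of a relatively open set at a point is determined by its tangent space there; your steps (b), (c) and the closing connectedness argument are an inline proof of exactly this. One detail is left unstated: in the forward direction, $C$ is open in $\Sigma$ because $\Sigma\cap U$ is an open subset of an affine space, hence locally connected.
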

 \begin{proof} This follows from \cite[Lemma 6]{CrMe18} and the fact that the germ at a point $x$ of a relatively open subset is determined by its tangent space at $x$.
 \end{proof}
 
In the rest of this section we prove the following, which in particular gives the map from right to left in Theorem \ref{thm:affstrat:chamberdec:correspondence} and proves its injectivity. 
\begin{proposition}\label{prop:locpoldec}
    If $X\subset V$ and $\mathcal{P}$ is a locally polyhedral chamber decomposition of $X$, then the collection \[
    \mathcal{S}(\mathcal{P})=\bigcup_{P\in \mathcal{P}}\mathcal{S}(P)
    \]
    of strata of the locally polyhedral sets in $\mathcal{P}$ is a stratification of $X$ and $(X,\mathcal{S}(\mathcal{P}))$ is locally modelled on fans. Moreover, $\mathcal{P}$ equals the collection of closures in $X$ of the strata of $\mathcal{S}(\mathcal{P})$.  
\end{proposition}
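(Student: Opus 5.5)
I will prove Proposition \ref{prop:locpoldec} in three steps: first show that $\mathcal{S}(\mathcal{P})$ is a partition of $X$, then show that it is locally modelled on fans, and finally recover $\mathcal{P}$ from the closures of its strata. Two facts will be used throughout. First, every stratum $\Sigma\in\mathcal{S}(P)$ is a relatively open subset of $V$. Near $x\in\Sigma$ it coincides with $x+\mathring{F}_x(P)$ locally, since for a polyhedral cone $\sigma$ the points of $\sigma$ near the origin whose minimal face equals the lineality space $F_0(\sigma)$ are exactly those of $F_0(\sigma)$. Second, by Definition \ref{def:locallypolyhedralchamberdecomposition}(i), the closure $\overline{\Sigma}$ in $X$ again belongs to $\mathcal{P}$.

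\textbf{Partition.} The strata cover $X$ because $\mathcal{P}$ covers $X$. Let $\Sigma\in\mathcal{S}(P)$ and $\Sigma'\in\mathcal{S}(Q)$ meet at a point $x$. Apply condition (ii) to $\overline{\Sigma},\overline{\Sigma'}\in\mathcal{P}$. Their intersection is the closure of a common stratum $\Sigma''$ of both sets. The strata of $\overline{\Sigma}$ are $\Sigma$ itself (the open stratum) together with strata contained in $\overline{\Sigma}\setminus\Sigma$. This holds because $\overline\Sigma$ is locally $x+F_x$ at points of $\Sigma$, and at its remaining points its cone is not a linear subspace. The same holds for $\overline{\Sigma'}$. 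Since $x\in\Sigma\cap\Sigma'$, the point $x$ lies in $\overline{\Sigma''}$, and a dimension and germ comparison at $x$ forces $\Sigma''=\Sigma=\Sigma'$. Local finiteness of $\mathcal{S}(\mathcal{P})$ follows from local finiteness of $\mathcal{P}$, because each locally polyhedral $P$ has only finitely many strata near any point. The frontier condition follows in the same way: $\overline{\Sigma}\in\mathcal{P}$, and it is the union of its own strata, all of which belong to $\mathcal{S}(\mathcal{P})$.

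\textbf{Local fan model.} Fix $x_0\in X$. By local finiteness, only finitely many $P_1,\dots,P_k\in\mathcal{P}$ meet a small convex neighbourhood $U$ of $x_0$. Shrinking $U$, I may assume that each of them contains $x_0$ and that $P_i\cap U=(x_0+\sigma_{x_0}(P_i))\cap U$. Set $\mathcal{F}_{x_0}=\{\sigma_{x_0}(P_i)\}$. This is closed under taking faces, because a face of $\sigma_{x_0}(P_i)$ is the cone at $x_0$ of the closure of some stratum of $P_i$, and that closure lies in $\mathcal{P}$ by (i). It satisfies the intersection property, because by (ii) each $P_i\cap P_j$ is the closure of a common stratum, and its cone at $x_0$ is $\sigma_{x_0}(P_i)\cap\sigma_{x_0}(P_j)$, which is therefore a common face. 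So $\mathcal{F}_{x_0}$ is a fan, and $X\cap U=(x_0+\mathrm{supp}\,\mathcal{F}_{x_0})\cap U$. To conclude via Proposition \ref{prop:affstrat:locmodfan:tangentcrit}, I will check that for $x\in U$ the stratum through $x$ has tangent space equal to the span of the relatively open cone of $\mathcal{F}_{x_0}$ containing $x-x_0$. Take the smallest $P_i$ containing $x$; then $x-x_0\in\mathring{\sigma}_{x_0}(P_i)$ and $x$ lies in the open stratum of $P_i$. Its tangent space is the span of $\sigma_{x_0}(P_i)$, because locally $P_i$ is the translated cone and points in the relative interior of a cone have the linear span as their minimal face.

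\textbf{Recovering $\mathcal{P}$.} By (i), every closure of a stratum lies in $\mathcal{P}$. Conversely, for $P\in\mathcal{P}$, condition (ii) with $Q=P$ shows that $P=P\cap P$ is the closure of one of its own strata. Hence $\mathcal{P}$ is exactly the set of closures of strata of $\mathcal{S}(\mathcal{P})$.

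I expect the main obstacle to be the partition step, and specifically the claim that the strata of $\overline{\Sigma}$ are $\Sigma$ together with strata lying in the frontier. That claim needs a careful local comparison of the germ of $\overline{\Sigma}$ at frontier points, where its cone is not linear, with its germ at points of $\Sigma$, where it is linear.
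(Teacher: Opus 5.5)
There is a genuine gap, and it sits exactly at the step you flagged. Your overall structure matches the paper's: partition via (ii) applied to $\overline{\Sigma},\overline{\Sigma'}$, frontier condition from the partition property, the cones at $x_0$ forming a fan, and recovery of $\mathcal{P}$ from (i) and from (ii) with $P=Q$. But the partition step needs $\Sigma\in\mathcal{S}(\overline{\Sigma})$. You justify this by asserting that at points of $\overline{\Sigma}\setminus\Sigma$ the cone of $\overline{\Sigma}$ is not a linear subspace, and you propose to prove that by a local germ comparison.

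That cannot work. $\overline{\Sigma}$ is the closure in $X$, not in $P$, and nothing local prevents its frontier points from lying outside $P$. For example, $P=\R^2\setminus\{0\}$ is locally polyhedral with the single stratum $\Sigma=P$. Its closure in $X=\R^2$ is $\R^2$, whose cone at $0$ is linear, and $\mathcal{S}(\overline{\Sigma})=\{\R^2\}$ does not contain $\Sigma$. What excludes this is condition (ii), which you never invoke at this point. The half of your claim that is genuinely local, namely that $\overline{\Sigma}$ is locally $x+F_x(P)$ at points $x\in\Sigma$, is fine.

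The paper supplies the missing global input in Lemma \ref{lemma:locpoldec}. Apply (ii) to $P$ and $\overline{\Sigma}$, which lies in $\mathcal{P}$ by (i). The resulting common stratum $\Sigma_0$ has closure $P\cap\overline{\Sigma}\supseteq\Sigma$. Since $\Sigma$ is open in $\overline{\Sigma}$, the dense set $\Sigma_0$ meets $\Sigma$. Both are strata of $P$, so $\Sigma_0=\Sigma$. This gives at once $\Sigma\in\mathcal{S}(\overline{\Sigma})$ and $\overline{\Sigma}\subseteq P$.

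Alternatively, use the observation from your last paragraph earlier in the proof. Condition (ii) with $P=Q$ shows every member of $\mathcal{P}$ is closed in $X$, so $\overline{\Sigma}\subseteq P$. Your local argument then does go through:
\begin{itemize}
\item If $y\in\overline{\Sigma}\setminus\Sigma$ had linear cone $\sigma_y(\overline{\Sigma})$, comparison with nearby points of $\Sigma$ forces it to equal the direction $F$ of $\Sigma$.
\item Hence $F$ lies in the lineality space of $\sigma_y(P)$.
\item This gives $F_y(P)=F$, and so $y\in\Sigma$, a contradiction.
\end{itemize}
Either way, this consequence of (ii) must come before the partition argument. The same closedness of members of $\mathcal{P}$ in $X$ is also used silently when you shrink $U$ so that every $P_i$ meeting it contains $x_0$.

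With that repaired, the rest of your proof works. Your final tangent-space check via Proposition \ref{prop:affstrat:locmodfan:tangentcrit}, using only the partition property, is a valid substitute for the paper's direct identification $\Sigma\cap U=(x_0+\mathring{\sigma}_{x_0}(\overline{\Sigma}))\cap U$.
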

To prove this, we first note the following.
\begin{lemma}\label{lemma:locpoldec}
For $X$ and $\mathcal{P}$ as in Proposition \ref{prop:locpoldec}, $P\in \mathcal{P}$ and $\Sigma\in \mathcal{S}(P)$,
\begin{itemize}\item[(a)] the closure $\overline{\Sigma}$ of $\Sigma$ in $X$ equals the closure of $\Sigma$ in $P$, 
\item[(b)] $\Sigma$ is the unique open stratum of $\overline{\Sigma}$.
\end{itemize}
\end{lemma}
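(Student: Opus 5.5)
The plan is to get both parts from the two axioms of Definition \ref{def:locallypolyhedralchamberdecomposition}, applied to $P$ and to $Q:=\overline{\Sigma}$, together with elementary properties of the stratification $\mathcal{S}(P)$ from Proposition \ref{prop:locpolstrat}.

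\emph{Step 1 (Part (a)).} By condition (i), $Q:=\overline{\Sigma}$ (closure in $X$) belongs to $\mathcal{P}$. Since $\Sigma\subset P\cap Q$, condition (ii) applies and gives $P\cap Q=\overline{\Sigma'}$ for some $\Sigma'\in\mathcal{S}(P)\cap\mathcal{S}(Q)$.
\begin{itemize}
\item[(1)] From $\Sigma\subset P\cap Q=\overline{\Sigma'}$ and $\Sigma\subset P$ we get $\Sigma\subset \mathrm{cl}_P(\Sigma')$.
\item[(2)] From $\Sigma'\subset P\cap Q\subset Q=\overline{\Sigma}$ and $\Sigma'\subset P$ we get $\Sigma'\subset\mathrm{cl}_P(\Sigma)$.
\end{itemize}
For two strata of the stratification $\mathcal{S}(P)$, mutual containment in each other's closures forces equality. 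Indeed, if $\Sigma\neq\Sigma'$, the frontier condition gives $\Sigma\subset \mathrm{cl}_P(\Sigma')\setminus\Sigma'$, so $\dim\Sigma<\dim\Sigma'$. Symmetrically $\dim\Sigma'<\dim\Sigma$, which is a contradiction. Hence $\Sigma=\Sigma'$. Then $Q=\overline{\Sigma}=\overline{\Sigma'}=P\cap Q\subset P$, so $\overline{\Sigma}$ equals the closure of $\Sigma$ in $P$.

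The main point I expect to need care is this dimension/frontier argument for $\mathcal{S}(P)$. Proposition \ref{prop:locpolstrat} was left to the reader, so I would briefly justify it locally. Near $x\in P$, the germ of $P$ is $x+\sigma_x(P)$, and the strata are the translates of relative interiors of faces of this cone. A stratum in the closure of another, but distinct from it, corresponds to a proper face, hence has strictly smaller dimension.

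\emph{Step 2 (Part (b), $\Sigma$ is a stratum of $Q$).} Step 1 shows $\Sigma=\Sigma'\in\mathcal{S}(Q)$.

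\emph{Step 3 (Part (b), $\Sigma$ is open in $Q$).} Let $x\in\Sigma$. Near $x$, the germ of $\Sigma$ is $x+F_x(Q)$, where $F_x(Q)$ is the minimal face of $\sigma_x(Q)$, i.e.\ its lineality space, a linear subspace. Since $Q=\overline{\Sigma}$, the germ of $Q$ at $x$ is the closure of the germ of $\Sigma$. Therefore $x+\sigma_x(Q)=x+F_x(Q)$, so $\sigma_x(Q)$ is a linear subspace and $\Sigma$ is a neighbourhood of $x$ in $Q$. Thus $\Sigma$ is open in $Q$.

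\emph{Step 4 (Part (b), uniqueness).} Any other open stratum of $Q$ would be a nonempty open subset of $Q$ disjoint from $\Sigma$. This contradicts the density of $\Sigma$ in $Q=\overline{\Sigma}$, so $\Sigma$ is the unique open stratum of $\overline{\Sigma}$.
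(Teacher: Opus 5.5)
Your proof is correct and has the same skeleton as the paper's. You put $Q=\overline{\Sigma}$ in $\mathcal{P}$ via (i), apply (ii) to $P$ and $Q$ to get a common stratum $\Sigma'$ with $\overline{\Sigma'}=P\cap\overline{\Sigma}$, show $\Sigma'=\Sigma$, and finish with density.

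The difference is in how you identify $\Sigma'=\Sigma$. The paper observes that $\Sigma$ is locally closed in $V$, hence open in $\overline{\Sigma}$, hence open in $P\cap\overline{\Sigma}=\overline{\Sigma'}$. Since $\Sigma'$ is dense there, $\Sigma$ meets $\Sigma'$, and as both are members of the partition $\mathcal{S}(P)$ they coincide. This also gives openness of $\Sigma$ in $\overline{\Sigma}$ at no extra cost, which the paper reuses directly for (b).

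You instead use mutual containment in closures in $P$ together with the strict dimension drop in the frontier condition for $\mathcal{S}(P)$. Since Proposition \ref{prop:locpolstrat} is left unproved in the paper, you rightly supply the local description of $\mathcal{S}(P)$ by relative interiors of faces of $\sigma_x(P)$ to justify that dimension drop. Your Step 3 then proves openness separately via the cone computation $\sigma_x(\overline{\Sigma})=F_x(\overline{\Sigma})$. That step is in effect a hands-on proof of the general fact that a locally closed set is open in its closure.

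Your route is a bit longer and uses more of the structure of $\mathcal{S}(P)$, namely its dimension function. The paper's route needs only local closedness of strata and the partition property.
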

\begin{proof} By (ii) applied to the given $P$ and to $Q:=\overline{\Sigma}$ (which, by (i), belongs to $\mathcal{P}$), there is a common stratum of $P$ and $Q$ whose closure in $X$ is $P\cap \overline{\Sigma}$ (which is the closure of $\Sigma$ in $P$). Since $\Sigma$ is locally closed in $V$ (and, hence, in $X$), it is an open subset of $\overline{\Sigma}$. Therefore it must intersect this common stratum and, both being strata of $P$, $\Sigma$ must be  this common stratum. So, $\overline{\Sigma}$ indeed equals the closure of $\Sigma$ in $P$ and $\Sigma$ is  an open stratum of $\overline{\Sigma}$. Since $\Sigma$ is also dense in $\overline{\Sigma}$, it is the unique open stratum.
\end{proof}
Part (a) of this lemma, combined with the following, shows that the polyhedral cones $\sigma_x(\overline{\Sigma})$ of the closure $\overline{\Sigma}$ in $X$ of a stratum $\Sigma\in \mathcal{S}(P)$ are naturally related to those of $P$. 
\begin{lemma}\label{lemma:internalclosures:strata:locpolsets}
    Let $P$ be a locally polyhedral subset of $V$ and $\Sigma\in \mathcal{S}(P)$ a stratum. The closure $\overline{\Sigma}$ in $P$ is a locally polyhedral subset of $V$ as well and, for each $x\in \overline{\Sigma}$, the polyhedral cone $\sigma_x(\overline{\Sigma})$ is a face of $\sigma_x(P)$. 
\end{lemma}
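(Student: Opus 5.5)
The plan is to reduce everything to a local statement about polyhedral cones and their faces, using that locally $P$ looks like $x+\sigma_x(P)$. Fix $x\in\overline{\Sigma}$ and choose a convex open neighbourhood $U$ of $x$ with $P\cap U=(x+\sigma)\cap U$, where $\sigma:=\sigma_x(P)$. The first step is to describe the partition \eqref{eqn:eqrel:strat:locpolset} inside $U$. For $y\in (x+\sigma)\cap U$, the germ of $P$ at $y$ equals the germ of $x+\sigma$ at $y$. That germ is $y+\sigma_y$, where $\sigma_y=\sigma+\mathbb{R}(x-y)$ is the tangent cone of $\sigma$ at the point $y-x$. Its minimal face (lineality space) is the linear span of the unique face $\tau$ of $\sigma$ with $y-x\in\mathring{\tau}$. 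Hence, inside $U$, two points are equivalent exactly when they lie in the same relatively open face $x+\mathring{\tau}$. Each $(x+\mathring{\tau})\cap U$ is convex, hence connected. So the connected components of the members of the partition, intersected with $U$, are unions of the sets $(x+\mathring{\tau})\cap U$. Here one has to be careful: a global stratum $\Sigma$ meets $U$ in a union of connected components of $\Sigma\cap U$. Each such component lies in a single $x+\mathring{\tau}$, because those sets are disjoint, relatively open in $(x+\sigma)\cap U$, and cover it. Different faces give different lineality spaces and thus different equivalence classes, so every component of $\Sigma\cap U$ is exactly one set $(x+\mathring{\tau})\cap U$.

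The second step computes the germ of $\overline{\Sigma}$ at $x$. Since $\overline{\Sigma}$ is the closure of $\Sigma$ in $P$, and closure is local, $\overline{\Sigma}\cap U$ is the closure in $(x+\sigma)\cap U$ of $\Sigma\cap U$. By the first step, $\Sigma\cap U$ is $(x+\mathring{\tau})\cap U$ for some faces $\tau$. All these faces have the same lineality span, since $\Sigma$ lies in one equivalence class. Two distinct faces of a cone have distinct spans, so there is a single face $\tau$. Its relative interior accumulates at the apex $x$ because $x\in\overline{\Sigma}$ and $0\in\tau$. Using convexity of $U$ and of $x+\tau$, the closure of $(x+\mathring{\tau})\cap U$ in $U$ is $(x+\tau)\cap U$; this uses that $\mathring{\tau}$ is dense in $\tau$ and that $U$ is open and convex. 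Thus $\overline{\Sigma}\cap U=(x+\tau)\cap U$ with $\tau$ a face of $\sigma_x(P)$.

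This shows at once that $\overline{\Sigma}$ is locally polyhedral at every point $x\in\overline{\Sigma}$, and that $\sigma_x(\overline{\Sigma})=\tau$ is a face of $\sigma_x(P)$, by uniqueness of the cone in Definition \ref{def:locpolset}.

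The main obstacle is the bookkeeping in the first step. One must check that a connected component of a global equivalence class, intersected with $U$, does not merge several local face-strata. The lineality-space argument handles this, since distinct faces of $\sigma$ have distinct linear spans. One must also verify the formula for the tangent cone $\sigma_y$ at a point $y-x$ in the relative interior of a face. That is a standard fact about polyhedral cones, and I would cite it rather than reprove it.
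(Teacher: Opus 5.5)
Your proof is correct. The paper gives no proof of this lemma (it is left to the reader as well known), so there is no argument to compare against. Your route is the natural one: localize to $x+\sigma_x(P)$, identify the classes of \eqref{eqn:eqrel:strat:locpolset} near $x$ with the relatively open faces $x+\mathring{\tau}$ via the lineality space $\mathrm{span}(\tau)$ of the tangent cone $\sigma+\mathbb{R}(y-x)$, then take closures. This is essentially the local version of Remark \ref{rem:stratification:polyhedron}, which the paper also takes for granted.

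One justification in your first step is wrong, though nothing depends on it. The sets $(x+\mathring{\tau})\cap U$ are not relatively open in $(x+\sigma)\cap U$. For instance, when $\sigma$ is pointed and nonzero, the face $\tau=\{0\}$ gives the apex $\{x\}$, which is not open in the cone. So the clopen argument you sketch for placing a component of $\Sigma\cap U$ inside a single $x+\mathring{\tau}$ fails. You do not need it:

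\begin{itemize}
\item Every point $y$ of $\Sigma$ has the same minimal face $F_y(P)=\mathrm{span}(\tau_y)$.
\item Every face satisfies $\tau=\sigma\cap\mathrm{span}(\tau)$, so distinct faces have distinct spans.
\item Hence all of $\Sigma\cap U$ lies in a single $(x+\mathring{\tau})\cap U$, and $\Sigma\cap U$ is nonempty because $x\in\overline{\Sigma}$.
\item Conversely, $(x+\mathring{\tau})\cap U$ is a connected subset of the same equivalence class that meets $\Sigma$. Since $\Sigma$ is a connected component of that class, $(x+\mathring{\tau})\cap U\subseteq\Sigma$.
\end{itemize}

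State the step this way and drop the openness claim; the rest of your first step and your second step then go through as written.

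A minor point: the closure step does not need convexity of $U$. Openness of $U$ and density of $\mathring{\tau}$ in $\tau$ already give $\mathrm{cl}_U\big((x+\mathring{\tau})\cap U\big)=(x+\tau)\cap U$. Convexity is only used to make $(x+\mathring{\tau})\cap U$ connected.
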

Using these lemmas we will now complete the proof of the proposition. We leave the proof of Lemma \ref{lemma:internalclosures:strata:locpolsets} to the reader, for it is surely well-known.
\begin{proof}[Proof of Proposition \ref{prop:locpoldec}] To see that $\mathcal{S}(\mathcal{P})$ is a partition, we ought to show that if $\Sigma_1,\Sigma_2\in \mathcal{S}(\mathcal{P})$ intersect each other, then $\Sigma_1=\Sigma_2$. By (i) the closures $\overline{\Sigma}_1$ and $\overline{\Sigma}_2$ in $X$ belong to $\mathcal{P}$. So, by (ii), there is a $\Sigma\in \mathcal{S}(\overline{\Sigma}_1)\cap \mathcal{S}(\overline{\Sigma}_2)$ with closure $\overline{\Sigma}$ in $X$ equal to $\overline{\Sigma}_1\cap \overline{\Sigma}_2$. Since $\Sigma_1$ and $\Sigma_2$ are open in their closures, $\Sigma_1\cap \Sigma_2$ is open in $\overline{\Sigma}_1\cap \overline{\Sigma}_2$ and, so, must intersect $\Sigma$. As both $\Sigma_1$ and $\Sigma$ are strata of $\overline{\Sigma}_1$, and both $\Sigma_2$ and $\Sigma$ are strata of $\overline{\Sigma}_2$ (see part (b) of Lemma \ref{lemma:locpoldec}), it follows that $\Sigma_1=\Sigma=\Sigma_2$.

Next, we show that $\mathcal{S}(\mathcal{P})$ is in fact a stratification. Suppose  $\Sigma_1,\Sigma_2\in \mathcal{S}(\mathcal{P})$ are such that $\Sigma_1$ intersects the closure $\overline{\Sigma}_2$ in $X$. By (i), $\overline{\Sigma}_2$ belongs to $\mathcal{P}$. Since $\mathcal{S}(\mathcal{P})$ is a partition, $\Sigma_1$ must equal the stratum of $\overline{\Sigma}_2$ that it intersects. So, $\Sigma_1\subset \overline{\Sigma}_2$ and $\dim(\Sigma_1)<\dim(\Sigma_2)$, which proves that $\mathcal{S}(\mathcal{P})$ satisfies the frontier condition. Since $\mathcal{P}$ is locally finite, $\mathcal{S}(\mathcal{P})$ is so as well and, hence, it is indeed a stratification of $X$.

To prove that $\mathcal{S}(\mathcal{P})$ is locally modelled on fans, let $x_0\in X$. Since $\mathcal{S}(\mathcal{P})$ is locally finite and the closures of its strata are locally polyhedral, there is a convex open neighbourhood $U$ of $x_0$ in $V$ small enough so that $U$ only intersects strata $\Sigma$ for which $x_0\in \overline{\Sigma}$ and so that $\overline{\Sigma}\cap U=(x_0+\sigma_{x_0}(\overline{\Sigma}))\cap U$ for each such $\Sigma$. Then, because (by part (b) of Lemma \ref{lemma:locpoldec}) $\Sigma$ is the unique open stratum of $\overline{\Sigma}$, it also holds that $\Sigma\cap U=\mathring{\sigma}_{x_0}(\overline{\Sigma})\cap U$ for each such $\Sigma$, where $\mathring{\sigma}_{x_0}(\overline{\Sigma})$ denotes the relative interior of $\sigma_{x_0}(\overline{\Sigma})$. So, since $U$ intersects each such relative interior in a convex (and, hence, connected) set, once we show that the finite collection of polyhedral cones 
\[
\mathcal{F}_{x_0}:=\{\sigma_{x_0}(\overline{\Sigma})\mid \Sigma\in \mathcal{S}(\mathcal{\mathcal{P}})\text{ such that } x_0\in \overline{\Sigma}\}
\] 
is a fan, we can conclude that $\mathcal{S}(\mathcal{P})$ is locally modelled on fans. To this end, suppose that $\Sigma\in \mathcal{S}(\mathcal{P})$ such that $x_0\in \overline{\Sigma}$, and let $\tau$ be a face of $\sigma_{x_0}(\overline{\Sigma})$. Since $\overline{\Sigma}\cap U=(x_0+\sigma_{x_0}(\overline{\Sigma}))\cap U$, for any $x\in U$ the cone $\sigma_x(\overline{\Sigma})$ of $\overline{\Sigma}$ at $x$ is equal to that of $\sigma_{x_0}(\overline{\Sigma})$. In view of this and Remark \ref{rem:stratification:polyhedron} for $P=\sigma_{x_0}(\overline{\Sigma})$,  $\sigma_x(\overline{\Sigma})=\sigma_y(\overline{\Sigma})$ for all $x,y\in U$ that lie in the relative interior $\mathring{\tau}$ of $\tau$. Hence, $(x_0+\mathring{\tau})\cap U$ is contained in a stratum $\Sigma_\tau\in \mathcal{S}(\overline{\Sigma})$.  In particular, $x_0\in \overline{\Sigma}_\tau$ and so, by construction of $U$, it holds that $\Sigma_\tau\cap U= \mathring{\sigma}_{x_0}(\overline{\Sigma}_\tau)\cap U$. Therefore, $(x_0+\mathring{\tau})\cap U$ is contained in $\mathring{\sigma}_{x_0}(\overline{\Sigma}_\tau)$. So, $\mathring{\tau}$ intersects $\mathring{\sigma}_{x_0}(\overline{\Sigma}_\tau)$. Since $\tau$ is a face of $\sigma_{x_0}(\overline{\Sigma})$ and, by Lemma \ref{lemma:internalclosures:strata:locpolsets}, the cone $\sigma_{x_0}(\overline{\Sigma}_\tau)$ is so too, it follows from this that $\tau=\sigma_{x_0}(\overline{\Sigma}_\tau)$. Hence, $\tau\in \mathcal{F}_{x_0}$, which shows that $\mathcal{F}_{x_0}$ satisfies the first axiom of a fan. For the second, suppose that $\Sigma_1,\Sigma_2\in \mathcal{S}(\mathcal{P})$ such that $x_0\in \overline{\Sigma}_1\cap\overline{\Sigma}_2$. Then $\overline{\Sigma}_1\cap \overline{\Sigma}_2=\overline{\Sigma}$ for some $\Sigma\in \mathcal{S}$, by condition (ii) in Definition \ref{def:locallypolyhedralchamberdecomposition}. Moreover, 
\[
\overline{\Sigma}\cap U=\overline{\Sigma}_1\cap \overline{\Sigma}_2\cap U=(x_0+\sigma_{x_0}(\overline{\Sigma}_1)\cap \sigma_{x_0}(\overline{\Sigma}_2))\cap U.
\] Therefore $\sigma_{x_0}(\overline{\Sigma}_1)\cap \sigma_{x_0}(\overline{\Sigma}_2)=\sigma_{x_0}(\overline{\Sigma})$, which by Lemma \ref{lemma:internalclosures:strata:locpolsets} is a face of both $\sigma_{x_0}(\overline{\Sigma}_1)$ and $\sigma_{x_0}(\overline{\Sigma}_2)$. Thus, $\mathcal{F}_{x_0}$ is a fan, as was left to be shown. 

Finally, that $\mathcal{P}$ is contained in the collection of closures in $X$ of the strata of $\mathcal{S}(\mathcal{P})$ follows from condition (ii) in Definition \ref{def:locallypolyhedralchamberdecomposition} for $P=Q$, and the reverse inclusion follows from condition (i). 
\end{proof}
\begin{remark}\label{rem:maxelts=closuresopenstrata} Any stratification comes with a partial order given by
\[
\Sigma\leq \Sigma'\iff \Sigma\subset \overline{\Sigma'},
\]
and the maximal elements with respect to this are the open strata. Therefore, the maximal elements (as in Remark \ref{rem:maxelts:locpolchambdec}) of a locally polyhedral chamber decomposition $\mathcal{P}$ of $X$ are the closures in $X$ of the strata of $\mathcal{S}(\mathcal{P})$ that are open in $X$.
\end{remark}
\subsection{From stratifications to decompositions and convexity of strata}
Next, we complete the proof of Theorem \ref{thm:affstrat:chamberdec:correspondence} and give a proof of Theorem \ref{thm:convexity:strata:general}. We start with the latter, because it will be used it for the former. 
\begin{proof}[Proof of Theorem \ref{thm:convexity:strata:general}] Suppose that $X$ and $\mathcal{S}$ are as in Theorem \ref{thm:affstrat:chamberdec:correspondence}. Let $\Sigma\in \mathcal{S}$ be a stratum of $\mathcal{S}$ and let $y_0,y_1\in \Sigma$. Since $\Sigma$ is path-connected, there is a path $\gamma:[0,1]\to \Sigma$ from $y_0$ to $y_1$. Write $\ell_t$ for the compact  line segment between $y_0$ and $\gamma(t)$. To prove convexity of $\Sigma$, we ought to show that $\ell_1\subset \Sigma$. Argueing by contradiction, suppose that $\ell_1\not\subset \Sigma$. Then 
\begin{equation}\label{eqn:set1:convexityopenstrata}
\{t\in [0,1]\mid \ell_t\not\subset \Sigma\}
\end{equation} is non-empty and, so, has an infimum $t_0\in [0,1]$. Let $A$ be the affine hull of $\Sigma$. By convexity of $A$, the map 
\[
\Gamma:[0,1]^2\to V,\quad \Gamma(s,t)=(1-s)y_0+s\gamma(t),
\] takes values in $A$. Using continuity of $\Gamma$ and that $\Sigma$ is open in $A$ (because $\mathcal{S}$ is affine in the sense of Definition \ref{def:affstrat}), it follows from the tube lemma that \eqref{eqn:set1:convexityopenstrata} is closed in $[0,1]$ and, so, it contains its infimum. Therefore $\ell_{t_0}\not\subset \Sigma$ and, in particular, $t_0>0$. Since $\{s\in [0,1]\mid \Gamma(s,t_0)\not\in \Sigma\}$ is non-empty it has an infimum $s_0\in [0,1]$, and since it is closed in $[0,1]$ as well, it also contains its infimum. So, $\Gamma(s_0,t_0)\notin \Sigma$ and, in particular, $s_0\in ]0,1[$. On the other hand, $\Gamma(s_0,t_0)\in X$ since $X$ is closed in a convex subset of $V$. So, there are a convex open $U$ around $x_0:=\Gamma(s_0,t_0)$ and a fan $\F_{x_0}$ as in Definition \ref{def:affstrat:locfan}. Since $s_0\in ]0,1[$, $t_0\in ]0,1]$ and $\Gamma^{-1}(U)$ is open in $[0,1]^2$, there is a $\delta>0$ such that $
[s_0-\delta,s_0+\delta]\times [t_0-\delta,t_0]\subset \Gamma^{-1}(U)$.
Let $\sigma\in \F_{x_0}$ be the cone for which the connected component of $\Sigma\cap U$ containing $\Gamma([s_0-\delta,s_0[\times\{t_0\})$ equals $(x_0+\mathring{\sigma})\cap U$. This being a polyhedral cone, there are linear functions $\alpha_1,...,\alpha_n\in V^*$ and a linear subspace $V_\sigma\subset V$ such that  
\[
\sigma=V_\sigma\cap \bigcap_{i=1}^n \{\alpha_i\geq0\}\quad \text{and}\quad \mathring{\sigma}=V_\sigma \cap \bigcap_{i=1}^n \{\alpha_i>0\}. 
\] 
Note that $n>0$, since $x_0\notin \Sigma$. Because $\alpha_i(\Gamma(s,t_0)-x_0)$ is an affine function of $s\in [s_0-\delta,s_0+\delta]$ that is strictly positive when $s\in [s_0-\delta,s_0[$ and vanishes at $s_0$, it must be strictly negative when $s\in ]s_0,s_0+\delta]$. So, $\Gamma(s,t_0)\notin x_0+\sigma$ when $s\in ]s_0,s_0+\delta]$. On the other hand, $\Gamma(s,t)\in \Sigma\cap U$ when 
\[
(s,t)=\begin{cases} (s_0-\delta,t) &\text{ with }t\in [t_0-\delta,t_0],\\
(s,t_0-\delta) &\text{ with }s\in [s_0-\delta,s_0+\delta],\\
(s_0+\delta,t) &\text{ with }t\in [t_0-\delta,t_0[.
\end{cases} 
\] 
Therefore $\Gamma(s_0+\delta,t)$ belongs to the same connected component of $\Sigma\cap U$ as $\Gamma(s_0-\delta,t_0)$ for all $t\in [t_0-\delta,t_0[$. This component being equal to $(x_0+\mathring{\sigma})\cap U$, it follows that $\Gamma(s_0+\delta,t_0)\in x_0+\sigma$, which is a contradiction.   
\end{proof}
Using this, we show the following and conclude the proof of Theorem \ref{thm:affstrat:chamberdec:correspondence}. 
\begin{lemma}\label{lemma:strataclosures:locfanstrat} Let $X$ and $\mathcal{S}$ be as in Theorem \ref{thm:affstrat:chamberdec:correspondence}. Further, let $\Sigma\in \mathcal{S}$. 
\begin{itemize}\item[(a)] The closure $\overline{\Sigma}$ in $X$ of $\Sigma$ is a locally polyhedral subset of $V$.
\item[(b)] The cone $\sigma_{x}(\overline{\Sigma})$ belongs to $\mathcal{F}_{x}$ for any $x\in \overline{\Sigma}$.
\item[(c)] The stratification $\mathcal{S}(\overline{\Sigma})$ of the locally polyhedral set $\overline{\Sigma}$ (as in Proposition \ref{prop:locpolstrat}) equals the collection of strata of $\mathcal{S}$ that intersect $\overline{\Sigma}$.
\item[(d)] If $\Sigma_1,\Sigma_2\in \mathcal{S}$, the stratification $\mathcal{S}(\overline{\Sigma}_1\cap \overline{\Sigma}_2)$ of the locally polyhedral set $\overline{\Sigma}_1\cap \overline{\Sigma}_2$ equals the collection of strata of $\mathcal{S}$ that intersect $\overline{\Sigma}_1\cap \overline{\Sigma}_2$.
\end{itemize}
\end{lemma}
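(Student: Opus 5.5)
The whole argument is local. Fix $x\in\overline{\Sigma}$ and choose a convex open $U\ni x$ and the fan $\F_x$ as in Definition \ref{def:affstrat:locfan}. The connected components of $\Sigma\cap U$ are then sets $(x+\mathring{\tau})\cap U$, with $\tau$ ranging over some subcollection $\F_x^\Sigma\subset\F_x$. This collection is finite, since $\F_x$ is finite.

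The key claim is that $\F_x^\Sigma$ consists of a single cone. By Theorem \ref{thm:convexity:strata:general}, $\Sigma$ is convex, so $\Sigma\cap U$ is convex and in particular connected. Hence $\Sigma\cap U=(x+\mathring{\tau}_\Sigma)\cap U$ for exactly one $\tau_\Sigma\in\F_x$.

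From this, (a) and (b) follow. Taking closures inside $U$, and using that $\overline{\mathring{\tau}}=\tau$ while $U$ is convex and open, gives $\overline{\Sigma}\cap U=(x+\tau_\Sigma)\cap U$. Here the closure in $X$ and the closure in $V$ agree on $U$, because $X\cap U=(x+\mathrm{supp}\,\F_x)\cap U$ is closed in $U$. So $\overline{\Sigma}$ is locally polyhedral and $\sigma_x(\overline{\Sigma})=\tau_\Sigma\in\F_x$.

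For (c), again work in $U$. Since $\tau_\Sigma\in\F_x$, all of its faces are in $\F_x$. The strata of $\mathcal{S}(\overline{\Sigma})$ near $x$ are the germs $(x+\mathring{\rho})\cap U$ for faces $\rho$ of $\tau_\Sigma$, by Remark \ref{rem:stratification:polyhedron} applied to the cone $\tau_\Sigma$. Each such set is also a connected component of $\Sigma'\cap U$ for some $\Sigma'\in\mathcal{S}$, by the local model. Conversely, any $\Sigma'\in\mathcal{S}$ meeting $\overline{\Sigma}$ near $x$ must meet $x+\tau_\Sigma$, hence some $x+\mathring{\rho}$. The relatively open cones of $\F_x$ partition the support, so that component of $\Sigma'\cap U$ equals $(x+\mathring{\rho})\cap U$.

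It follows that the partition $\{\Sigma'\cap\overline{\Sigma}\}$ and $\mathcal{S}(\overline{\Sigma})$ agree locally everywhere. It remains to globalize, that is, to show each $\Sigma'$ meeting $\overline{\Sigma}$ lies entirely in $\overline{\Sigma}$ and is a single stratum of $\mathcal{S}(\overline{\Sigma})$. The locally defined map $y\mapsto F_y(\overline{\Sigma})$ is locally constant along $\Sigma'\cap\overline{\Sigma}$, because it is given locally by $\rho$. Also, $\Sigma'\cap\overline{\Sigma}$ is open and closed in $\Sigma'$: it is closed since $\overline{\Sigma}$ is closed in $X$, and open by the local picture, where $\Sigma'\cap U\supset(x+\mathring{\rho})\cap U\subset\overline{\Sigma}$ with $\Sigma'$ convex so its local part is a single component. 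Connectedness of $\Sigma'$ then gives $\Sigma'\subset\overline{\Sigma}$, and $\Sigma'$ is contained in a single stratum of $\mathcal{S}(\overline{\Sigma})$. Conversely, each stratum of $\mathcal{S}(\overline{\Sigma})$ is connected and locally a union of pieces of a single $\Sigma'$, so it lies in one $\Sigma'$. Hence the two partitions coincide.

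Part (d) is the same argument with $(x+\tau_{\Sigma_1})\cap(x+\tau_{\Sigma_2})=x+(\tau_{\Sigma_1}\cap\tau_{\Sigma_2})$ near $x$. Because $\F_x$ is a fan, $\tau_{\Sigma_1}\cap\tau_{\Sigma_2}$ is a common face and belongs to $\F_x$. So $\overline{\Sigma}_1\cap\overline{\Sigma}_2$ is locally polyhedral with cone in $\F_x$, and the local comparison of strata and the open–closed globalization go through verbatim.

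I expect the main obstacle to be the globalization step in (c) and (d). That step needs care in checking that $\Sigma'\cap\overline{\Sigma}$ is open in $\Sigma'$. I would rely on the convexity from Theorem \ref{thm:convexity:strata:general} to ensure that each $\Sigma'\cap U$ has a single component. I would also verify that the equivalence classes of \eqref{eqn:eqrel:strat:locpolset} restricted to a connected $\Sigma'$ are determined by the locally constant cone $\rho$.
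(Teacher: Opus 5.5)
Your proposal is correct and follows the paper's proof. For (a), (b) and (d) the argument is the same: convexity of $\Sigma$ (Theorem \ref{thm:convexity:strata:general}) forces $\Sigma\cap U$ to be a single $(x+\mathring{\tau})\cap U$, closures are taken inside $U$, and $\tau_{\Sigma_1}\cap\tau_{\Sigma_2}\in\mathcal{F}_x$ because $\mathcal{F}_x$ is a fan. The only difference is in (c): you re-prove $\Sigma'\subset\overline{\Sigma}$ by an open--closed argument, which the paper gets directly from the frontier condition of $\mathcal{S}$, and you write out by hand the local-constancy argument that the paper invokes as the tangent-space criterion behind Proposition \ref{prop:affstrat:locmodfan:tangentcrit}.
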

\begin{proof} Let $x\in \overline{\Sigma}$. Consider a convex open $U$ around $x$ and a fan $\mathcal{F}_{x}$ as in Definition \ref{def:affstrat:locfan}. By Theorem \ref{thm:convexity:strata:general}, $\Sigma$ is convex. So, $\Sigma\cap U$ is convex as well and, hence, it is connected. Therefore, $\Sigma\cap U=(x+\mathring{\sigma})\cap U$ for some  $\sigma\in \mathring{\F}_{x}$. Since $X\cap U=\big(x+\mathrm{supp}(\mathcal{F}_{x})\big)\cap U$ and $U$ is open in $V$, by taking closures in $X\cap U$ it follows that $\overline{\Sigma}\cap U=(x_0+\sigma)\cap U$. This shows that (a) and (b) hold. For (c), note that by the frontier condition on $\mathcal{S}$ the strata of $\mathcal{S}$ that intersect $\overline{\Sigma}$ form a stratification of $\overline{\Sigma}$, which is affine in the sense of Definition \ref{def:affstrat}, because $\mathcal{S}$ is. Because $(X,\mathcal{S})$ is locally modelled on fans, for each $x\in \overline{\Sigma}$ the tangent space to the stratum of $\mathcal{S}$ through $x$ is equal to the common minimal face of the cones in $\mathcal{F}_x$. It follows from (b) that the same holds for the strata of $\mathcal{S}(\overline{\Sigma})$. So, since $\mathcal{S}(\overline{\Sigma})$ is an affine stratification of $\overline{\Sigma}$ as well, by the same argument as for Proposition \ref{prop:affstrat:locmodfan:tangentcrit} these two stratifications of $\overline{\Sigma}$ must be equal. Part (d) follows in the same way, now using in addition that for any $x\in \overline{\Sigma}_1\cap \overline{\Sigma}_2$ the cone $\sigma_x(\overline{\Sigma}_1\cap\overline{\Sigma}_2)$ belongs to $\mathcal{F}_x$, because $\sigma_x(\overline{\Sigma}_1\cap\overline{\Sigma}_2)=\sigma_x(\overline{\Sigma}_1)\cap\sigma_x(\overline{\Sigma}_2)$ and both $\sigma_x(\overline{\Sigma}_1)$ and $\sigma_x(\overline{\Sigma}_2)$ do  by (b). 
\end{proof}
\begin{proof}[Proof of Theorem \ref{thm:affstrat:chamberdec:correspondence}] Let $X\subset V$ be as in the theorem. In view of Proposition \ref{prop:locpoldec}, it remains to prove that if $\mathcal{S}$ is a stratification of $X$ for which $(X,\mathcal{S})$ is locally modeled on fans, then the collection $\mathcal{P}(\mathcal{S})$ consisting of the closures in $X$ of strata of $\mathcal{S}$ is a locally polyhedral chamber decomposition of $X$ and the stratification associated to $\mathcal{P}(\mathcal{S})$ as in Proposition \ref{prop:locpoldec} equals $\mathcal{S}$. For this, note first that $\mathcal{P}(\mathcal{S})$ is a cover of $X$ consisting of locally polyhedral sets by part (a) of Lemma \ref{lemma:strataclosures:locfanstrat}. It is locally finite since $\mathcal{S}$ is so. Moreover, from part (c) of Lemma \ref{lemma:strataclosures:locfanstrat} it is immediate that $\mathcal{P}(\mathcal{S})$ satisfies condition (i) in Definition \ref{def:locallypolyhedralchamberdecomposition}. By combining part (d) of Lemma \ref{lemma:strataclosures:locfanstrat} with Lemma \ref{lemma:locpolstrat:regpartconnected} below, it follows that if $\Sigma_1,\Sigma_2\in \mathcal{S}$ are such that $\overline{\Sigma}_1$ and $\overline{\Sigma}_2$ intersect, then $\overline{\Sigma}_1\cap \overline{\Sigma}_2$ is the closure in $X$ of a unique stratum in $\mathcal{S}$, which by part (c) of Lemma \ref{lemma:strataclosures:locfanstrat} is a stratum of both $\mathcal{S}(\overline{\Sigma}_1)$ and $\mathcal{S}(\overline{\Sigma}_2)$. Hence, $\mathcal{P}(\mathcal{S})$ satisfies condition (ii) in Definition \ref{def:locallypolyhedralchamberdecomposition} as well. Finally, by part (c) of Lemma \ref{lemma:strataclosures:locfanstrat} the stratification associated to $\mathcal{P}(\mathcal{S})$ as in Proposition \ref{prop:locpoldec} equals $\mathcal{S}$.
\end{proof}
Here we used the following lemma.
\begin{lemma}\label{lemma:locpolstrat:regpartconnected} For any connected locally polyhedral set $P$ in $V$, there is a unique stratum of $\mathcal{S}(P)$ that is open and dense in $P$. 
\end{lemma}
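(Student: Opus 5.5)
The plan is to define the candidate stratum directly and show it is open, dense and (path-)connected.

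\emph{Candidate.} Let $P^{\mathrm{reg}}\subset P$ be the set of points $x$ at which $\sigma_x(P)$ is a linear subspace, i.e.\ $\sigma_x(P)=F_x(P)$. Such points are exactly those where $P$ is locally an open piece of an affine subspace. Every point $y\in P$ near $x$ satisfies $\sigma_y(P)=\sigma_x(P)$, because the germ of $x+\sigma_x(P)$ at $y$ is $y+\sigma_x(P)$. So $P^{\mathrm{reg}}$ is open in $P$, and the map $x\mapsto \sigma_x(P)$ is locally constant on it.

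\emph{Density.} Take $x\in P$ and a neighbourhood $U$ with $P\cap U=(x+\sigma_x)\cap U$. Points $x+v$ with $v$ in the relative interior of a maximal-dimensional face of the cone $\sigma_x$ have cone equal to the span of that face. In particular, this holds for $v$ in the relative interior of $\sigma_x$ itself, where the cone is $\mathrm{span}(\sigma_x)$. Such points exist arbitrarily close to $x$, so $P^{\mathrm{reg}}$ is dense.

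\emph{Connectedness (the main obstacle).} The key step is to show that $P^{\mathrm{reg}}$ is connected, and that all points of it have the same $F_x(P)$. Given this, $P^{\mathrm{reg}}$ lies in a single equivalence class of \eqref{eqn:eqrel:strat:locpolset} and, being connected, lies in a single stratum $\Sigma$.

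The removal of the lower-dimensional part could a priori disconnect $P$. I will handle this locally. Near any $x\in P$, the set $P^{\mathrm{reg}}\cap U$ contains $(x+\mathring{\sigma}_x)\cap U$, which is convex and hence connected. I claim it in fact equals this set. If $v\in\sigma_x$ lies on a proper face $\tau$, then every neighbourhood of $x+v$ meets $x+\sigma_x$ in a set that is not an open piece of a linear space of dimension $\dim\tau$ inside $\sigma_x$; more precisely, the cone at $x+v$ is the tangent cone of $\sigma_x$ along $\tau$, which is not a subspace since $\tau$ is proper. So $P^{\mathrm{reg}}\cap U=(x+\mathring{\sigma}_x)\cap U$ is connected, and its closure contains $x$.

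Now define an equivalence relation on $P$: $x\approx y$ if the connected components of $P^{\mathrm{reg}}$ adjacent to $x$ (i.e.\ containing $(x+\mathring\sigma_x)\cap U$) and to $y$ coincide. By the local statement, each class is open. Since $P$ is connected, there is a single class. Hence $P^{\mathrm{reg}}$ is connected, and local constancy of $\sigma_x$ on it gives a common $F_x(P)$.

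\emph{Conclusion.} The stratum $\Sigma$ containing $P^{\mathrm{reg}}$ is in fact equal to it. Indeed, every point of $\Sigma$ has the same $F_x(P)=\sigma_x(P)$, a linear subspace. So $\Sigma$ is open and dense.

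For uniqueness, any other open stratum would be open in $P$ and would therefore meet the dense set $P^{\mathrm{reg}}=\Sigma$. Since strata partition $P$, it equals $\Sigma$.
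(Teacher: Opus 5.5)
Your argument follows the paper's proof closely. Your $P^{\mathrm{reg}}$ is the same set as the paper's $R$, the union of the open strata. Your local identification $P^{\mathrm{reg}}\cap U=(x+\mathring{\sigma}_x(P))\cap U$ is exactly the local input the paper uses. Your relation $\approx$ is a hands-on proof of the topological fact the paper quotes: a dense subset of a connected space that meets some neighbourhood of each point in a connected set is itself connected. The one real difference is that the paper \emph{defines} $R$ as the union of the open strata, so once $R$ is connected it is automatically a single open dense stratum. You instead define $P^{\mathrm{reg}}$ through the cones, so at the end you must still show that the stratum $\Sigma\supseteq P^{\mathrm{reg}}$ is open. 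That step is not justified as written.

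In your \emph{Conclusion} you assert that every $y\in\Sigma$ satisfies $F_y(P)=\sigma_y(P)$. Membership in $\Sigma$ only gives $F_y(P)=L$, i.e.\ that $L$ is the lineality space of $\sigma_y(P)$, and this alone does not force $\sigma_y(P)=L$. For example, in the disconnected set $P=\{b=-1\}\cup\{b\geq 0\}\subset\R^2$, the points of $\{b=0\}$ have $F_y(P)$ equal to the horizontal line, which is the cone at the regular points of $\{b=-1\}$, yet $\sigma_y(P)$ is a half-plane. Without $\Sigma\subseteq P^{\mathrm{reg}}$ you get density of $\Sigma$ but not openness.

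The repair uses only what you have already proved. For $y\in\Sigma$, your local claim shows that the points of $P^{\mathrm{reg}}$ near $y$ are those of $y+\mathring{\sigma}_y(P)$, where the cone is $\mathrm{span}\,\sigma_y(P)$. Since $\sigma$ is constant equal to $L$ on the connected set $P^{\mathrm{reg}}$, this gives $\mathrm{span}\,\sigma_y(P)=L$. Hence $L=F_y(P)\subseteq\sigma_y(P)\subseteq\mathrm{span}\,\sigma_y(P)=L$.

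Alternatively, use the frontier condition of $\mathcal{S}(P)$ from Proposition \ref{prop:locpolstrat}. If $\Sigma$ were not open at some $w$, local finiteness would give a stratum $\Sigma'\neq\Sigma$ with $w\in\overline{\Sigma'}$, and the frontier condition would then give $\Sigma\subseteq\overline{\Sigma'}$. This is impossible, because $\Sigma$ contains the nonempty set $P^{\mathrm{reg}}$, which is open in $P$ and disjoint from $\Sigma'$.

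A minor point about your density paragraph: for $v$ in the relative interior of a face $\tau$ of $\sigma_x$, the cone at $x+v$ is $\sigma_x+\mathrm{span}\,\tau$, not $\mathrm{span}\,\tau$. This is harmless, since you only use the case $\tau=\sigma_x$.
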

\begin{proof} Let $R$ be the union of strata that are open in $P$. As for any stratified space, $R$ is dense in $P$. Therefore, it suffices to show that $R$ is connected. Since $R$ is dense in $P$ and $P$ is connected, to prove that $R$ is connected it is enough to show that any $x\in P$ admits an open neighbourhood that intersects $R$ in a connected subspace. For any $x\in P$ and any open $U$ in $V$ around $x$ such that $P\cap U=\sigma_x(P)\cap U$ the intersection of $U$ with $R$ is $(x+\mathring{\sigma}_x(P))\cap U$. So, for any convex such $U$, the open neighbourhood $P\cap U$ indeed intersects $R$ in a connected subspace.
\end{proof}
\section{The chamber decomposition of the momentum map image}\label{sec:chamberdecmomim} Whilst, for simplicity, we have restricted attention to actions on compact connected manifolds in the introduction, we will in fact prove the following more general versions of Theorem \ref{thm:folklore:intro} and Theorem \ref{thm:main:stratcomparison} in what follows. 
\begin{theorem}\label{thm:momim:locpoldec} Let $T$ be a torus and $\mu:(S,\omega)\to \mathfrak{t}^*$ a connected Hamiltonian $T$-space such that $\mu$ is proper as map into some convex subset of $\mathfrak{t}^*$. Then the connected components of the set of relative regular values attained by $\mu$ are convex open subsets of $\Delta$ and their closures in $\Delta$ are locally polyhedral sets. The strata of these locally polyhedral sets are convex and their closures form a locally polyhedral decomposition $\mathcal{P}_\mathrm{Ham}(\Delta)$ of $\Delta$ as in Definition \ref{def:locallypolyhedralchamberdecomposition}. 
\end{theorem}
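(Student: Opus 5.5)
The plan is to reduce everything to Theorem \ref{thm:affstrat:chamberdec:correspondence} and Theorem \ref{thm:convexity:strata:general}. The set $\Delta$ is closed in the convex set $C$ into which $\mu$ is proper, since proper maps into locally compact Hausdorff spaces are closed. So it suffices to show that $(\Delta,\mathcal{S}_\mathrm{Ham}(\Delta))$ is locally modelled on fans in the sense of Definition \ref{def:affstrat:locfan}. Once this is known, Theorem \ref{thm:affstrat:chamberdec:correspondence} gives that the closures in $\Delta$ of the strata form a locally polyhedral chamber decomposition $\mathcal{P}_\mathrm{Ham}(\Delta)$. Theorem \ref{thm:convexity:strata:general} gives that the strata are convex. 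The remaining claims then follow from the correspondence:

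\begin{itemize}
\item the strata of the closures are again strata of $\mathcal{S}_\mathrm{Ham}(\Delta)$, by Lemma \ref{lemma:strataclosures:locfanstrat}(c);
\item the maximal members of $\mathcal{P}_\mathrm{Ham}(\Delta)$ are the closures of the open strata, by Remark \ref{rem:maxelts=closuresopenstrata}.
\end{itemize}

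I would then identify the open strata of $\mathcal{S}_\mathrm{Ham}(\Delta)$ with the connected components of the set of relative regular values in $\Delta$. Relative regular values are exactly the points of $\Delta$ that are not images of points whose stabilizer is larger than the generic stabilizer $T_S$ of the (connected) $S$. Near such a value every orbit type stratum hitting the fibre maps onto an open subset of the affine hull $x+\mathfrak{t}_S^{0}$, so the stratum there is open in $\Delta$. Conversely, if $x$ is a critical value, some orbit type stratum with larger stabilizer maps into a proper affine subspace through $x$. That image must then cut out a lower-dimensional stratum through $x$, so $x$ lies in no open stratum. Hence the components in question are the open strata, which are convex, and their closures are locally polyhedral.

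The main step is the local fan model. Fix $x\in\Delta$. By properness $\mu^{-1}(x)$ is compact, so finitely many orbits $T\cdot p_1,\dots,T\cdot p_k$ have Marle–Guillemin–Sternberg neighbourhoods covering it. Each model is $T\times_{H_i}(\mathfrak{h}_i^0\times N_i)$ with momentum map $x+\eta+\Phi_i(v)$, where $\Phi_i$ is the quadratic momentum map of the symplectic normal $H_i$-representation $N_i$ with weights $\beta^i_1,\dots,\beta^i_{m_i}\in\mathfrak{h}_i^*$. In each model, the image of an orbit type stratum is $x+\mathfrak{h}_i^0+\mathrm{cone}(\beta^i_j: j\in J)$, for subsets $J$ determined by which coordinates are nonzero. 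By the description recalled in Section \ref{sec:bckgr:hamstrat}, the germ of $\mathcal{S}_\mathrm{Ham}(\Delta)$ at $x$ is obtained by intersecting these images. Its local strata are therefore the classes of points $y$ having the same set of such cones containing $y-x$ in their relative interiors. For a single vector configuration this partition is exactly the relatively open cones of the GKZ (chamber/secondary) fan of $\{\beta^i_j\}$, pulled back along $\mathfrak{t}^*\to\mathfrak{h}_i^*$. For several $i$, it is the common refinement $\mathcal{F}_x$ of these finitely many fans, which is again a fan.

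The obstacle is making the identification rigorous. One must verify the two conditions in Proposition \ref{prop:affstrat:locmodfan:tangentcrit}. First, the supports agree, which should follow because $\Delta$ near $x$ is the union of the images of the local models. Second, the tangent space of the stratum through each nearby $y$ equals the span of the cone of $\mathcal{F}_x$ containing $y-x$. I would try to prove the second by showing that, in the model, the stratum of $y$ is cut out by the same data (orbit types over $y$ near $\mu^{-1}(x)$) that define the GKZ cone. The delicate points are that the common refinement must be taken over all subsets $J$ at once, and that connected components within $U$ coincide with the relatively open cones. Convexity of these cones handles the latter once $U$ is convex.
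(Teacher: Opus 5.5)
Your architecture is the paper's. You reduce to Theorems \ref{thm:affstrat:chamberdec:correspondence} and \ref{thm:convexity:strata:general}, take as local model the common refinement of the pulled-back GKZ fans of the symplectic slice representations (the paper's isotropy fan $\mathcal{F}_x$), and check it with the tangent-space criterion of Proposition \ref{prop:affstrat:locmodfan:tangentcrit}. But the step you call ``the obstacle'' is the actual content of the proof, and your sketch misses the key point.

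By \eqref{eqn:tngtsp:hamstrat} you must show $\bigcap_{p\in\mu^{-1}(y)}\mathfrak{t}_p^0=T_y(x+\mathring{\sigma})$. By Proposition \ref{prop:linearchamberdec}, the right-hand side is an intersection of the $(\mathfrak{t}_{p_i})_v^0$ over the \emph{whole} linear fibre $\mu_{\mathcal{SN}_{p_i}}^{-1}(\pi_{p_i}(y-x))$. That fibre is typically unbounded (e.g.\ as soon as weights $\beta$ and $-\beta$ both occur). By contrast, $\mu^{-1}(y)\cap\widehat{U}_i$ only sees the part of it lying in the MGS neighbourhood. So ``the stratum of $y$ is cut out by the same data'' is not automatic. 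Nothing in your argument ensures that $\mu^{-1}(y)$ realises every isotropy type occurring in the linear fibre, and if it misses some, $\bigcap_{p\in\mu^{-1}(y)}\mathfrak{t}_p^0$ comes out too large.

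The paper resolves this by rescaling. For $v$ in the linear fibre, the point $([1,sv],s^2\alpha)$ lies in the model for small $s$, has the same isotropy as $v$, and maps to $x+s^2(y-x)\in\,]x,y]$. Lemma \ref{lemma:affstratlocconvnhood} then puts $]x,y]$ inside a single stratum, so $\bigcap_{p\in\mu^{-1}(z)}\mathfrak{t}_p^0$ is constant for $z\in\,]x,y]$. This gives \eqref{eqn:intersectionequality} and hence the identity at $y$. An alternative would be a uniform Hoffman-type bound realising each support pattern over $c$ by some $v$ with $|v|^2\leq C|c|$.

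Your description of the local strata as classes of points lying in the same cones is also not how $\mathcal{S}_\mathrm{Ham}(\Delta)$ is defined. Proposition \ref{prop:piecewise-affine-cover} characterizes it through tangent spaces, so your ``therefore'' needs its own argument.

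Second, your reason for the supports agreeing is wrong as stated. The support of the common refinement is the \emph{intersection} $\bigcap_i\pi_{p_i}^{-1}(\Delta_{p_i})$ (Lemma \ref{lemma:intersection:fans}). Your remark that ``$\Delta$ near $x$ is the union of the images of the local models'' describes the germ of $\Delta$ as the \emph{union} of these cones. The two agree only because all cones $\pi_p^{-1}(\Delta_p)$, $p\in\mu^{-1}(x)$, coincide with $\sigma_x(\Delta)$ (Proposition \ref{prop:cone:mommapimage}). That is a local convexity result relying on connectedness of the fibres of $\mu$. This is where connectedness of $S$ and properness into a convex set are really used, and your proposal never invokes it.

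A minor point: relative regularity at $p$ means $\mathfrak{t}_p=\mathfrak{t}_S$. So you should compare isotropy Lie algebras, not stabilizer groups, since points with extra finite isotropy are still regular.
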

\begin{theorem}\label{thm:momim:stratcomparison} The stratification $\mathcal{S}_\mathrm{Ham}(\Delta)$ in \cite{Mol} coincides with the stratification associated (as in Theorem \ref{thm:affstrat:chamberdec:correspondence}) to the locally polyhedral decomposition $\mathcal{P}_\mathrm{Ham}(\Delta)$ in Theorem \ref{thm:momim:locpoldec}.
\end{theorem}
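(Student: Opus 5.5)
The plan is to reduce Theorem \ref{thm:momim:stratcomparison}, together with Theorem \ref{thm:momim:locpoldec}, to the general correspondence of Section \ref{sec:affstrat:chamberdec}. By Theorem \ref{thm:momimstrat}, $\mathcal{S}_\mathrm{Ham}(\Delta)$ is an affine stratification of $\Delta$. Since $\mu$ is proper as a map into a convex set $C$, its image $\Delta$ is closed in $C$, so $\Delta$ is closed in a convex subset of $\mathfrak{t}^*$. So once I show that $(\Delta,\mathcal{S}_\mathrm{Ham}(\Delta))$ is locally modelled on fans, Theorem \ref{thm:affstrat:chamberdec:correspondence} gives the rest. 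The closures of the strata form a locally polyhedral chamber decomposition $\mathcal{P}$ whose associated stratification is $\mathcal{S}_\mathrm{Ham}(\Delta)$. Theorem \ref{thm:convexity:strata:general} gives convexity of the strata.

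It then remains to identify $\mathcal{P}$ with $\mathcal{P}_\mathrm{Ham}(\Delta)$ as defined in Theorem \ref{thm:momim:locpoldec}. By Remarks \ref{rem:maxelts:locpolchambdec} and \ref{rem:maxelts=closuresopenstrata}, both are determined by their maximal members, namely the closures of the open strata. So I must show that the open strata of $\mathcal{S}_\mathrm{Ham}(\Delta)$ are exactly the connected components of the set of relative regular values. For this I use the local description of $\mathcal{S}_\mathrm{Ham}(\Delta)$ as intersections of images of orbit-type strata (Section \ref{sec:bckgr:hamstrat}). A value $x$ is relatively regular iff every $p\in\mu^{-1}(x)$ has stabilizer equal to the generic stabilizer, i.e. the stabilizer of the principal orbit type; by connectedness of $S$, there is a unique principal orbit type. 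Near such $x$, the local normal form shows that $x$ lies in an open stratum. Conversely, at a relatively singular value some fiber point has a larger stabilizer, and the image of its orbit-type stratum is locally a cone of lower dimension, so it cuts out a lower-dimensional stratum through $x$. The components of the regular set and the open strata are then both the connected components of the same open dense set.

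The main step is local: fix $x_0\in\Delta$. By properness, $\mu^{-1}(x_0)$ is compact and meets finitely many orbit types. By the Marle--Guillemin--Sternberg normal form at points $p\in\mu^{-1}(x_0)$, near $p$ the map $\mu$ looks like $x_0+\mathfrak{h}^0$-directions plus the quadratic momentum map of the symplectic normal representation of $H=T_p$, with weights $\alpha_1,\dots,\alpha_k\in\mathfrak{h}^*$. The image of the model is $x_0+(\mathfrak{h}^0+\mathrm{cone}(\alpha_j))$. The image of each orbit-type stratum is $x_0+(\mathfrak{h}^0+\mathrm{cone}$ of a subset of weights$)$, namely those weights whose span is a fixed subspace. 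Intersecting these images, as in the construction of $\mathcal{S}_\mathrm{Ham}$, yields locally the common refinement of the cones $\mathrm{cone}(A)$ for subsets $A$ of the weights. This refinement is the GKZ (chamber) fan of the vector configuration, pulled back along $\mathfrak{t}^*\to\mathfrak{h}^*$. The fan $\mathcal{F}_{x_0}$ is then the common refinement of these finitely many fans, one per orbit in $\mu^{-1}(x_0)$ up to the finitely many local models; a common refinement of fans with the same support, restricted to their union, is again a fan.

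I would verify the fan condition via Proposition \ref{prop:affstrat:locmodfan:tangentcrit}. The germ of $\Delta$ at $x_0$ is $x_0+\mathrm{supp}(\mathcal{F}_{x_0})$, using properness to guarantee that only fiber points near $\mu^{-1}(x_0)$ contribute. The tangent space of the stratum through $x$ near $x_0$ is the span of the open cone of $\mathcal{F}_{x_0}$ containing $x-x_0$. The main obstacle is this tangent-space identification: one must check that the recipe of \cite{Mol} for $T_x\Sigma$, as an intersection of the spans of the orbit-type images through $x$, matches the minimal GKZ cell. This requires the combinatorial fact that the GKZ cell containing a vector $v$ is the intersection of all $\mathrm{cone}(A)$ with $v$ in their relative interior, where $A$ ranges over subsets of weights. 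I would prove this fact directly by Carathéodory-type arguments.
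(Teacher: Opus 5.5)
Your overall plan matches the paper's. You show that $(\Delta,\mathcal{S}_\mathrm{Ham}(\Delta))$ is locally modelled on intersections of pulled-back GKZ fans of the symplectic slice representations, check this with the tangent criterion of Proposition \ref{prop:affstrat:locmodfan:tangentcrit}, apply Theorems \ref{thm:affstrat:chamberdec:correspondence} and \ref{thm:convexity:strata:general}, and match maximal members via Remarks \ref{rem:maxelts:locpolchambdec} and \ref{rem:maxelts=closuresopenstrata}. However, the step you call the main obstacle is not where the difficulty lies. That GKZ cells are intersections of relatively open cones is standard, and the paper simply cites it. Combined with $\mathfrak{t}_v^0=\mathrm{Span}_\R\{\alpha_i\mid z_i\neq 0\}$, it immediately gives Proposition \ref{prop:linearchamberdec}: the cell through $y$ has tangent space $\bigcap_{v\in\mu_{V}^{-1}(y)}\mathfrak{t}_v^0$, where the intersection runs over the \emph{entire} fiber of the linear momentum map.

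The missing idea is how to compare this with $\bigcap_{p\in\mu^{-1}(x)}\mathfrak{t}_p^0$. The Marle--Guillemin--Sternberg model is only valid on a neighbourhood $\widehat{U}_i$ of the orbit through $p_i$, that is, for small $v\in\mathcal{SN}_{p_i}$. By contrast, $\mu_{\mathcal{SN}_{p_i}}^{-1}(\pi_{p_i}(x-x_0))$ is in general unbounded, for instance as soon as both $\alpha$ and $-\alpha$ occur as weights. Properness gives $\mu^{-1}(x)\subset\bigcup_i\widehat{U}_i$, so you only see the part of each linear fiber that lies in the chart. This yields for free only $T_x\Sigma\supseteq T_x(x_0+\mathring{\sigma})$.

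For the reverse inclusion you must show that every support pattern occurring in the linear fiber also occurs inside the chart, uniformly for $x$ near $x_0$. The combinatorial fact says nothing about this. The paper first uses Lemma \ref{lemma:affstratlocconvnhood} to choose a convex $U$ such that $]x_0,x]$ lies in the stratum through $x$ for every $x\in U$. It then rescales $([1,v],\alpha)\mapsto([1,sv],s^2\alpha)$. For small $s$ this lands in the chart, leaves the isotropy unchanged, and maps to $x_0+s^2(x-x_0)\in\,]x_0,x]$. This proves \eqref{eqn:intersectionequality}, hence $T_x(x_0+\mathring{\sigma})=\bigcap_{p\in\mu^{-1}(]x_0,x])}\mathfrak{t}_p^0$. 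Constancy of the tangent space along the stratum containing $]x_0,x]$ then turns the right-hand side into $\bigcap_{p\in\mu^{-1}(x)}\mathfrak{t}_p^0$. An alternative fix is a homogeneity bound: any $y=\sum_{j\in I}t_j\alpha_j$ with all $t_j>0$ admits such a representation with $|t|\le C|y|$. Without one of these arguments, your tangent-space identification is not justified.

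Two smaller corrections:
\begin{itemize}
\item Relative regularity of $x$ is a condition on isotropy Lie algebras, namely $\mathfrak{t}_p=\mathfrak{t}_{\mathrm{prin}}$ for all $p\in\mu^{-1}(x)$, not on stabilizer groups; finite extra stabilizers do occur over regular values. Once stated this way, the identification of open strata with components of the regular set follows directly from \eqref{eqn:tngtsp:hamstrat}.
\item The fact that the fans $\pi_{p_i}^{-1}(\F_{p_i})$ all have support $\sigma_{x_0}(\Delta)$ is Proposition \ref{prop:cone:mommapimage}. It relies on connectedness of the fibers, not on properness alone.
\end{itemize}
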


Note here that the conditions required in the above theorems imply that the fibers of $\mu$ are connected and that $\Delta$ is itself convex (see, e.g. \cite{BjKa10}).\\

We will prove Theorem \ref{thm:momim:locpoldec} and Theorem \ref{thm:momim:stratcomparison} simultaneously, by showing that the affine stratification $\mathcal{S}_\mathrm{Ham}(\Delta)$ is locally modeled on fans and applying Theorem \ref{thm:affstrat:chamberdec:correspondence}. As somewhat of a by-product, this gives a slight alternative to the proof of \cite[Theorem 1.1]{Liu26} (the existence of which we became aware of only after completing the first version of this paper).

\subsection{Background on the stratification}\label{sec:bckgr:hamstrat} First, we recall the relevant properties and parts of the construction of $\mathcal{S}_\mathrm{Ham}(\Delta)$, starting with a general construction of affine stratifications. \\

Let $X$ be a locally closed subset of a finite-dimensional real vector space $V$. By a \textbf{piecewise-affine cover} of $X$ we mean a locally finite cover $\mathcal{A}$ of $X$ with the following properties.
\begin{itemize}
\item[(i)] Each member of $\mathcal{A}$ is relatively open in $V$.
\item[(ii)] The closure in $X$ of each member of $\mathcal{A}$ is a union of members of $\mathcal{A}$. 
\end{itemize}
The following is proved in \cite{Mol}.
\begin{proposition}\label{prop:piecewise-affine-cover} Let $\mathcal{A}=\{P_i\mid i\in I\}$ be a piecewise-affine cover of $X$. There is a unique stratification $\mathcal{S}(\mathcal{A})$ of $X$ into affine submanifolds of $V$ such that
\[
T_x\Sigma=\bigcap_{i\in I_x} T_xP_i,\quad I_x:=\{i\in I\mid x\in P_i\},
\] for each stratum $\Sigma\in \mathcal{S}(\mathcal{A})$ and each $x\in \Sigma$. 
\end{proposition}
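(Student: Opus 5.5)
The plan is to define the stratification via an equivalence relation on $X$ and check the three required properties: affineness, the frontier condition, and uniqueness. For $x\in X$ set $W_x:=\bigcap_{i\in I_x}T_xP_i$. Each member $P_i$ is relatively open in $V$, so it is an open subset of its affine hull $A_i$, and $T_xP_i$ is the linear subspace $\vec A_i$ parallel to $A_i$. By local finiteness, $I_x$ is finite and there is a convex open neighbourhood $U_x$ of $x$ in $V$ that meets only the $P_i$ with $i\in I_x$, at least after a further shrinking. I would define $\Sigma_x$ to be the connected component containing $x$ of the set of $y\in (x+W_x)\cap X$ with $I_y=I_x$, and take $\mathcal{S}(\mathcal{A})$ to be the collection of connected components of the sets $\{y\in X: I_y=I\}\cap\{y : W_y=W\}$.

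\textbf{Step 1 (local structure near $x$).} Shrink $U_x$ so that $U_x\cap P_i=U_x\cap A_i$ for every $i\in I_x$. This uses that $P_i$ is open in $A_i$. For $y\in U_x$ we then have $I_y\subset I_x$, and equality holds exactly when $y\in\bigcap_{i\in I_x}A_i$. This intersection is $x+W_x$. Hence the points of $U_x$ with $I_y=I_x$ form $(x+W_x)\cap U_x$, which is convex, hence connected, and is an open subset of an affine space with tangent space $W_x$. So each member of the partition by $I_y$ is locally an affine submanifold with the prescribed tangent space, and taking connected components gives affine submanifolds. Strata defined this way automatically satisfy $T_x\Sigma=W_x$.

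\textbf{Step 2 (frontier condition and local finiteness).} Local finiteness follows from local finiteness of $\mathcal{A}$, since near $x$ only finitely many index sets $I_y\subset I_x$ occur. The frontier condition needs property (ii) of a piecewise-affine cover. Suppose $y\in\overline{\Sigma}$ with $\Sigma=\Sigma_x$. For $j\in I_x$ the stratum satisfies $\Sigma\subset P_j$, so $y\in\overline{P_j}$, which is a union of members. I expect the main obstacle here: showing $I_y\supsetneq I_x$, or $I_y=I_x$ with $y\in\Sigma$, and then $\Sigma_y\subset\overline{\Sigma}$ and $\dim\Sigma_y<\dim\Sigma$ in the first case.

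My approach would be this. Near $y$, $\Sigma$ is a union of pieces $(y+W)\cap U_y\cap\{I=I_x\}$, and these are open subsets of affine spaces. The set $\{z\in U_y : I_z\supseteq I_x\}$ is $\bigcap_{j\in I_x}\overline{P_j}\cap U_y$. Using (ii), each $\overline{P_j}\cap U_y$ is a union of sets of the form $A_k\cap U_y$ with $k\in I_y$, so it is a finite union of affine pieces through $y$. Then $\Sigma\cap U_y$ is an open subset of such an affine piece, and its closure contains $y$. From this I would deduce that $I_x\subset I_y$ and that $y+W_y\subset\overline{\Sigma}$ locally. The argument is that all affine pieces involved pass through $y$, so they are cones with apex $y$, and the germ of $\Sigma_y$ is contained in the germ of the cone generated by $\Sigma$. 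Strict dimension drop when $y\notin\Sigma$ comes from $W_y\subset W_x$, with equality forcing $y\in\Sigma$ by Step 1.

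\textbf{Step 3 (uniqueness).} Any stratification into affine submanifolds with $T_x\Sigma=W_x$ has germ at $x$ equal to $x+W_x$ near $x$, because an affine submanifold is determined locally by its tangent space. Two such stratifications therefore agree locally, and by connectedness of strata they agree. One has to be careful that strata are the connected components of the equivalence classes; the tangent condition alone determines the germs, so the classes agree as well.
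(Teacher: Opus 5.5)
There is a genuine gap, and it sits in Step 1. You claim that, after shrinking, $U_x$ meets only the $P_i$ with $i\in I_x$, so that $I_y\subset I_x$ on $U_x$. This is false. Local finiteness only lets you discard members whose closure misses $x$, and any $P_j$ with $x\in\overline{P_j}\setminus P_j$ meets every neighbourhood of $x$. This already happens in Example~\ref{example:hamstrat} at a vertex of $\Delta$. So $\{y\in U_x\mid I_y=I_x\}$ is $(x+W_x)\cap U_x$ with the members not containing $x$ removed, not $(x+W_x)\cap U_x$, and partitioning by the index sets $I_y$ is the wrong partition. Intersecting with $\{W_y=W\}$ changes nothing, since $I_y$ determines $W_y$.

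Here is a concrete counterexample. Take $X=\R^2$ and $\mathcal{A}=\{\R^2,\R^2\setminus\{p\}\}$. This is a piecewise-affine cover, because the closure of $\R^2\setminus\{p\}$ is the member $\R^2$. Here $W_y=\R^2$ for every $y$, so the only admissible stratification is $\{\R^2\}$. Your $\mathcal{S}(\mathcal{A})$, however, contains $\{p\}$, whose tangent space is $0\neq W_p$. Replacing $\{p\}$ by a closed half-line breaks affineness, and replacing it by a convergent sequence with its limit breaks local finiteness.

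The missing idea is that the strata must be defined through $W_y$, for instance as connected components of the level sets of $y\mapsto W_y$. Property (ii) is then needed already for the tangent condition, not only for the frontier condition. The key point is this: if $x\in\overline{P_j}$, then $\overline{P_j}$, being a union of members, contains some $P_k\ni x$, so $x+W_x\subset A_k\subset A_j$.

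Now take a convex $U$ that meets only members whose closure contains $x$ and satisfies $U\cap A_i\subset P_i$ for $i\in I_x$. Then every $y\in U\cap X$ has $W_x\subset W_y$ and $y-x\in W_y$. Hence $\{y\in U\cap X\mid W_y=W_x\}=(x+W_x)\cap U$, and $\dim W_y>\dim W_x$ elsewhere in $U$. This gives affineness, the correct tangent spaces and the dimension drop.

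The frontier condition still needs more: a stratum $C$ with $x\in\overline{C}$ must accumulate on all of $(x+W_x)\cap U$. One way to get this is to show, by induction on $\dim A_j$ and using (ii), that each $\overline{P_j}\cap U$ is invariant under translation by $W_x$ in a product neighbourhood of $x$. Then so is $y\mapsto W_y=\bigcap_{\{j\mid y\in\overline{P_j}\}}\vec{A}_j$. Your Step 2 sketch treats $\overline{P_j}\cap U_y$ as a union of sets $A_k\cap U_y$ with $k\in I_y$, which relies on the same false local picture: members inside $\overline{P_j}$ that meet $U_y$ need not contain $y$. Your uniqueness argument in Step 3, via germs and connectedness, is fine.
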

The stratification of $\mathcal{S}_\mathrm{Ham}(\Delta)$ is obtained by applying this to the piecewise-affine cover $\mathcal{A}_\mathrm{Ham}$ consisting of the images of the orbit type strata of the $T$-action on $S$ under the momentum map $\mu$. As was already observed in \cite{GS82}, since $T$ is abelian the image $P=\mu(\Sigma_S)$ of any orbit type stratum $\Sigma_S\subset S$ is relatively open in $\mathfrak{t}^*$, with tangent space at a given point $x\in P$ equal to the annihilator $\mathfrak{t}_p^0$ of the isotropy Lie algebra at any point $p\in \mu^{-1}(x)\cap \Sigma_S$. So, given $\Sigma\in \mathcal{S}_\mathrm{Ham}(\Delta)$ and $x\in \Sigma$, its tangent space at $x$ is
\begin{equation}\label{eqn:tngtsp:hamstrat}
T_x\Sigma=\bigcap_{p\in \mu^{-1}(x)} \mathfrak{t}_p^0. 
\end{equation} By the same argument as in the proof of Proposition \ref{prop:affstrat:locmodfan:tangentcrit}, this characterizes the affine stratification $\mathcal{S}_\mathrm{Ham}(\Delta)$ uniquely. It is closely related to the so-called x-ray considered in \cite{Tol98}. 
\begin{example}\label{example:hamstrat} Consider the $\Tt^2$-action on $\mathbb{C}P^3$ given by \[
(\lambda_1,\lambda_2)\cdot[z_0:z_1:z_2:z_3]=[\lambda_1^{-1}z_0:\lambda_1z_1:\lambda_2z_2:\lambda_2^{-1}z_3].
\] With respect to a standard symplectic form on $\mathbb{C}P^3$, this action is Hamiltonian with momentum map 
\[
\mu([z_0:z_1:z_2:z_3])=\frac{1}{||z||^2}(-|z_0|^2+|z_1|^2,|z_2|^2-|z_3|^2).
\] The fixed points of this action are $[1:0:0:0]$, $[0:1:0:0]$, $[0:0:1:0]$ and $[0:0:0:1]$, which map to $(-1,0)$, $(1,0)$, $(0,1)$ and $(0,-1)$. The momentum map image $\Delta$ is the convex hull of these four points.  The images of the other $\Tt^2$-orbit type strata under $\mu$ are the relatively open line segments
\begin{center}
\begin{tikzpicture} 

\draw[ultra thick,black] (-6,0) -- (-5,1) ;
\draw[ultra thick,black] (-5,1) -- (-4,0) ;
\draw[ultra thick,black] (-4,0) -- (-5,-1) ;
\draw[ultra thick,black] (-5,-1) -- (-6,0) ;
\draw[ultra thick,black] (-1,1) -- (-1,-1);
\draw[ultra thick,black] (2.2,0) -- (4.2,0);

\filldraw[white] (-6,0) circle (1.5pt) node[black,left] {$(-1,0)$} ;
\filldraw[white] (-4,0) circle (1.5pt) node[black, right] {$(1,0)$};
\filldraw[white] (-5,1) circle (1.5pt) node[black,above] {$(0,1)$};
\filldraw[white] (-5,-1) circle (1.5pt) node[black, below] {$(0,-1)$};
\filldraw[white] (-1,-1) circle (1.5pt) node[black, below] {$(0,-1)$};
\filldraw[white] (-1,1) circle (1.5pt) node[black, above] {$(0,1)$};
\filldraw[white] (2.2,0) circle (1.5pt) node[black, left] {$(-1,0)$};
\filldraw[white] (4.2,0) circle (1.5pt) node[black, right] {$(1,0)$};

\end{tikzpicture}
\end{center} and the interior of $\Delta$. The stratification $\mathcal{S}_\mathrm{Ham}(\Delta)$ therefore is as follows.
\begin{center}
\begin{tikzpicture} 
\filldraw[blue!25!white] (-1,0) -- (0,1) -- (1,0) -- (0,-1) -- cycle ;

\draw[ultra thick,black] (-1,0) -- (0,1);
\draw[ultra thick,black] (0,1) -- (1,0) ;
\draw[ultra thick,black] (1,0) -- (0,-1) ;
\draw[ultra thick,black] (0,-1) -- (-1,0) ;
\draw[ultra thick,black] (0,1) -- (0,-1);
\draw[ultra thick,black] (-1,0) -- (1,0);

\filldraw[black] (-1,0) circle (2pt) node[left] {$(-1,0)$};
\filldraw[black] (1,0) circle (2pt) node[right] {$(1,0)$};
\filldraw[black] (0,1) circle (2pt) node[ above] {$(0,1)$};
\filldraw[black] (0,-1) circle (2pt) node[below] {$(0,-1)$};
\filldraw[black] (0,0) circle (2pt) ;

\end{tikzpicture}
\end{center}
\end{example}
\subsection{Background on the GKZ fan of a vector configuration} In order to show that $\mathcal{S}_\mathrm{Ham}(\Delta)$ is locally modelled on fans, we will use the GKZ fan of a vector configuration. In this section we give a reminder on this, focussing on the properties that are relevant to us. Let $V$ be a finite-dimensional real vector space and let $\mathcal{V}$ be a \textbf{vector configuration} in $V$, meaning a finite collection of vectors in $V$. The \textbf{Gelfand-Kapranov-Zelevinski (GKZ) fan} $\mathrm{Ch}(\mathcal{V})$ (also called the chamber fan or secondary fan) of the vector configuration is a certain fan with support the polyhedral cone 
\[ 
\mathrm{Cone}(\mathcal{V}):=\left\{\sum_{v\in \mathcal{V}}s_vv\mid s_v\geq 0\right\}
\]
generated by $\mathcal{V}$. The relatively open chambers of $\mathrm{Ch}(\mathcal{V})$ are the members of the partition given by the equivalence relation
\[
x\sim y \iff \{\mathcal{I}\subset \mathcal{V}\mid x\in \textrm{Cone}(\mathcal{I})\}=\{\mathcal{I}\subset\mathcal{V} \mid y\in \textrm{Cone}(\mathcal{I})\}
\] on $\mathrm{Cone}(\mathcal{V})$, with $\textrm{Cone}(\mathcal{I})$ the polyhedral cone generated by the subset $\mathcal{I}$, and the closures of these chambers are the cones of $\textrm{Ch}(\mathcal{V})$. The relatively open chamber of $\mathrm{Ch}(\mathcal{V})$ through a point $x$ can also be described as the intersection of the relative interiors of the polyhedral cones spanned by all subsets $\mathcal{I}\subset \mathcal{V}$ for which $x$ lies in the relative interior of $\mathrm{Cone}(\mathcal{I})$ (see, e.g., \cite[Section 5.4]{DeLoRaSa}). In view of this and the fact that the relative interior of $\mathrm{Cone}(\mathcal{I})$ equals the strictly positive linear span of $\mathcal{I}$, the following holds.
\begin{lemma}\label{lemma:stratification:chamberfan} For any relatively open chamber $\mathring{\sigma}$ of $\mathrm{Ch}(\mathcal{V})$ and any  $x\in \mathring{\sigma}$, the tangent space to $\mathring{\sigma}$ at $x$ is given by
\[
T_x\mathring{\sigma}=\bigcap_{\{\mathcal{I}\subset \mathcal{V}\mid\text{ }x=\sum_{v\in \mathcal{I}} s_vv\,\mathrm{for\text{ }some}\,s_v>0\}} \mathrm{Span}_\R(\mathcal{I}). 
\] 
\end{lemma}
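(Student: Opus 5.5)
The plan is to derive the formula directly from the stated description of the relatively open chamber through $x$. By the cited fact (see \cite[Section 5.4]{DeLoRaSa}), $\mathring{\sigma}$ is the intersection
\[
\mathring{\sigma}=\bigcap_{\mathcal{I}\in \mathcal{C}_x}\mathrm{relint}(\mathrm{Cone}(\mathcal{I})),\qquad \mathcal{C}_x:=\{\mathcal{I}\subset\mathcal{V}\mid x\in \mathrm{relint}(\mathrm{Cone}(\mathcal{I}))\}.
\]
Since $\mathrm{relint}(\mathrm{Cone}(\mathcal{I}))$ is the set of strictly positive combinations of $\mathcal{I}$, the index set $\mathcal{C}_x$ is exactly the index set appearing in the lemma. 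So it suffices to compute the tangent space at $x$ of a finite intersection of relatively open convex sets, all containing $x$.

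First, for a single $\mathcal{I}\in\mathcal{C}_x$, I will note that $\mathrm{relint}(\mathrm{Cone}(\mathcal{I}))$ is open in its linear span $\mathrm{Span}_\R(\mathcal{I})$. Hence its tangent space at any of its points, in particular at $x$, is $\mathrm{Span}_\R(\mathcal{I})$. Second, I will use the general fact that if $O_1,\dots,O_k$ are subsets of $V$ with $O_j$ open in a linear subspace $W_j$, and $x\in\bigcap_j O_j$, then $\bigcap_j O_j$ is open in $\bigcap_j W_j$. Indeed, $O_j=W_j\cap U_j$ for some open $U_j\subset V$, so $\bigcap_j O_j=(\bigcap_j W_j)\cap(\bigcap_j U_j)$. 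Thus the germ of $\bigcap_j O_j$ at $x$ is that of $\bigcap_j W_j$, giving $T_x\bigcap_j O_j=\bigcap_j W_j$. Finiteness of $\mathcal{C}_x$ (since $\mathcal{V}$ is finite) is what makes $\bigcap_j U_j$ open.

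Combining the two steps gives $T_x\mathring{\sigma}=\bigcap_{\mathcal{I}\in\mathcal{C}_x}\mathrm{Span}_\R(\mathcal{I})$, which is the claimed formula.

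The only point needing care is the first one: checking that the description of $\mathring{\sigma}$ from \cite{DeLoRaSa} is used with the same index set as in the lemma, i.e.\ that "relative interior of $\mathrm{Cone}(\mathcal{I})$" coincides with "strictly positive span of $\mathcal{I}$". This is standard. A strictly positive combination lies in the relative interior because it is an interior point of the image of the open positive orthant under the surjection $\R^{\mathcal{I}}\to\mathrm{Span}_\R(\mathcal{I})$. Conversely, a relative interior point $y$ can be written as $y=\epsilon\, c+ (y-\epsilon\, c)$, where $c=\sum_{v\in\mathcal{I}}v$ and $y-\epsilon\, c\in\mathrm{Cone}(\mathcal{I})$ for small $\epsilon>0$. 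This yields strictly positive coefficients. The case $\mathcal{I}=\emptyset$ (relevant only when $x=0$) is consistent with both descriptions, giving $\{0\}$.
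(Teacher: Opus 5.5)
Your proposal is correct and is essentially the paper's own argument. The paper also deduces the lemma directly from the cited description of the relatively open chamber through $x$ as the intersection of the relative interiors of the cones $\mathrm{Cone}(\mathcal{I})$ that contain $x$ in their relative interior, together with the fact that this relative interior is the strictly positive span of $\mathcal{I}$. You simply make explicit the routine steps the paper leaves implicit: each such relative interior is open in $\mathrm{Span}_\R(\mathcal{I})$, a finite intersection of such sets through $x$ is open in the intersection of the spans, and the relative interior is characterized as the strictly positive span.
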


\subsection{The chamber decomposition for linear symplectic torus actions} The vector configurations that we will consider are those given by the weights of symplectic representations. To elaborate, recall that a \textbf{symplectic representation} of a Lie group $H$ is a symplectic vector space $(V,\omega_V)$ equipped with a linear symplectic $H$-action. Such an $H$-action is  Hamiltonian, with momentum map $\mu_V:V\to \mathfrak{h}^*$ given by
\begin{equation}\label{eqn:quadmommap:linsymprep}
\langle \mu_{V}(v),\xi\rangle:=\frac{1}{2}\omega_V(\xi\cdot v,v),\quad \xi\in \mathfrak{h},\quad v\in V.
\end{equation} 
The symplectic representations of a torus $T$ are built out of the irreducible symplectic representations 
\[
(\C_{\alpha},\omega_\mathrm{st}),\quad \alpha\in \Lambda_T^*,
\] which are indexed by the lattice $\Lambda_T\subset \mathfrak{t}^*$ dual to $\Lambda_T:=\ker(\exp_T)$ and consist of the $\R$-vector space $\C$ with the linear symplectic form $\omega_\mathrm{st}$ given by
\[
\omega_\mathrm{st}(w,z)=\frac{1}{2\pi i}(w\overline{z}-\overline{w}z)\in \R, \quad w,z\in \C,
\] and with the $T$-action given by
\[
\exp_T(\xi)\cdot z:=e^{2\pi i\langle\alpha,\xi\rangle}z,\quad \xi\in \mathfrak{t},\quad z\in \C.  
\]
That is, for any symplectic $T$-representation $(V,\omega_V)$ there is a unique unordered tuple $(\alpha_1,...,\alpha_n)$ -- called its \textbf{weight tuple} -- such that $(V,\omega_V)$ is isomorphic to the symplectic $T$-representation $(\C_{\alpha_1},\omega_\mathrm{st})\oplus ...\oplus (\C_{\alpha_n},\omega_\mathrm{st})$. Via such an isomorphism, the momentum map $\mu_V$ is identified with the map
\begin{equation}\label{eqn:weight:description:quadmommap}
(z_1,...,z_n)\mapsto \sum_{i=1}^n|z_i|^2\alpha_i.
\end{equation}

Therefore, the image $\Delta_V$ of $\mu_V$ is the cone generated by the weight tuple and, hence, the GKZ fan of the set of weights of $(V,\omega_V)$, that we will denote as $\F_V$, is a polyhedral fan with support $\Delta_V$. This fan is the linear analogue of the  decomposition in Theorem \ref{thm:momim:locpoldec}. The tangent spaces to the relatively open cones of $\mathcal{F}_V$ can be characterized as follows, akin to \eqref{eqn:tngtsp:hamstrat}.
\begin{proposition}\label{prop:linearchamberdec} Let $(V,\omega_V)$ be a symplectic representation of a torus $T$, $x\in \Delta_V$ and $\mathring{\sigma}$ the relatively open cone of $\F_V$ through $x$. Then
\begin{equation*} T_x\mathring{\sigma}=\bigcap_{p\in \mu_V^{-1}(x)} \mathfrak{t}_p^0.
\end{equation*}
\end{proposition}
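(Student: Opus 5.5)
The plan is to compare the right-hand side with Lemma \ref{lemma:stratification:chamberfan} applied to the weight configuration $\mathcal{V}=\{\alpha_1,\dots,\alpha_n\}$, identifying $V$ with $\C_{\alpha_1}\oplus\dots\oplus\C_{\alpha_n}$ so that $\mu_V$ is given by \eqref{eqn:weight:description:quadmommap}. Throughout we work with the index set $\{1,\dots,n\}$ rather than with the set of weights, since weights may repeat; this does not affect cones or spans.

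First, compute the isotropy algebras. For $p=(z_1,\dots,z_n)$, put $J_p:=\{i\mid z_i\neq 0\}$. An element $\xi\in\mathfrak{t}$ acts on $z_i$ by multiplication with $2\pi i\langle\alpha_i,\xi\rangle$. Hence $\mathfrak{t}_p=\bigcap_{i\in J_p}\ker\alpha_i$, and therefore
\[
\mathfrak{t}_p^0=\mathrm{Span}_\R\{\alpha_i\mid i\in J_p\}.
\]

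Second, describe the fibre. We have $p\in\mu_V^{-1}(x)$ exactly when $x=\sum_{i\in J_p}|z_i|^2\alpha_i$ with all coefficients $|z_i|^2>0$. Conversely, given a subset $J\subset\{1,\dots,n\}$ and coefficients $s_i>0$ with $x=\sum_{i\in J}s_i\alpha_i$, the point with $z_i=\sqrt{s_i}$ for $i\in J$ and $z_i=0$ otherwise lies in the fibre and has $J_p=J$. So the sets $J_p$, for $p\in\mu_V^{-1}(x)$, are exactly the subsets $J$ for which $x$ is a strictly positive combination of $\{\alpha_i\}_{i\in J}$.

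Taking the intersection over the fibre then gives
\[
\bigcap_{p\in\mu_V^{-1}(x)}\mathfrak{t}_p^0=\bigcap_{J}\mathrm{Span}_\R\{\alpha_i\mid i\in J\},
\]
where $J$ runs over the subsets just described. By Lemma \ref{lemma:stratification:chamberfan}, the right-hand side is $T_x\mathring{\sigma}$.

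The one point needing care is the passage from index subsets $J$ to subsets $\mathcal{I}\subset\mathcal{V}$ of the configuration when weights repeat or some weight is zero. For a subset $J$, let $\mathcal{I}$ be the set of distinct weights $\alpha_i$ with $i\in J$. A strictly positive combination over $J$ can be regrouped into one over $\mathcal{I}$, by adding the coefficients of equal weights, and conversely one over $\mathcal{I}$ can be spread over $J$, by splitting each coefficient among the indices carrying that weight. Both $\mathcal{I}$ and $J$ have the same span. Hence the two families of subspaces being intersected are the same, and the identity holds.
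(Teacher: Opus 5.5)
Your proof is correct and follows the paper's argument. Like the paper, you identify $\mathfrak{t}_p^0$ with $\mathrm{Span}_\R\{\alpha_i\mid z_i\neq 0\}$, match the supports of points in $\mu_V^{-1}(x)$ with the index sets over which $x$ is a strictly positive combination of the weights, and invoke Lemma \ref{lemma:stratification:chamberfan}. The differences are minor: the paper gets $\mathfrak{t}_p^0$ from $\mathfrak{t}_p^0=\im(\mathrm{d}\mu_V)_p$ rather than from the infinitesimal action, and it passes silently between index subsets and subsets of the weight set, a point you handle explicitly.
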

\begin{proof} We can suppose  that $(V,\omega_V)=(\C_{\alpha_1},\omega_\mathrm{st})\oplus ...\oplus (\C_{\alpha_n},\omega_\mathrm{st})$. From the description \eqref{eqn:weight:description:quadmommap} of $\mu_V$ and the fact that $\mathfrak{t}_p^0=\im(\d \mu_V)_p$ it follows that
\[
\mathfrak{t}_p^0=\mathrm{Span}_\R\{\alpha_i\mid z_i\neq 0\},\quad  (z_1,...,z_n):=p,
\] for all $p\in V$. On the other hand, by Lemma \ref{lemma:stratification:chamberfan} it holds that
\[
T_x\mathring{\sigma}=\bigcap_{\big\{I\subset \{1,...,n\}\mid \text{ }x=\sum_{i\in I}t_i\alpha_i\text{ for some }  t_i>0\big\}} \mathrm{Span}_\R\{\alpha_i\mid i\in I\}.
\] Since $x=\sum_{i\in I}t_i\alpha_i$ for some $t_i>0$ if and only if $x=\mu_V(p)$ for some $p=(z_1,...,z_n)$ such that $I=\{i\mid z_i\neq 0\}$, the proposition follows.  
\end{proof} 

\subsection{The isotropy fans and proof of the main theorems} Let $T$ be a torus and $\mu:(S,\omega)\to \mathfrak{t}^*$ a Hamiltonian $T$-space such that $\mu$ has connected fibers and is proper as map into its image $\Delta:=\mu(S)$. The momentum maps in Theorem \ref{thm:momim:locpoldec} and Theorem \ref{thm:momim:stratcomparison} have both of these properties (see \cite[Theorem 30]{BjKa10} for connected of the fibers). We will now construct a natural fan $\mathcal{F}_x$ for each $x\in \Delta$. For this, recall that any point $p\in S$ has a naturally associated symplectic representation $(\mathcal{S}\mathcal{N}_p,\omega_p)$ of the isotropy group $T_p$, called the \textbf{symplectic normal (or slice) representation}. This consists of the vector space
\begin{equation*}
\mathcal{S}\mathcal{N}_p:=\frac{T_p\O^\omega}{T_p\O\cap T_p\O^\omega}
\end{equation*} (the fiber of the symplectic normal bundle to the orbit $\O$ through $p$), equipped with linear symplectic form induced by $\omega$ and the linear symplectic $T_p$-action given by 
\begin{equation}\label{eqn:sympnormrep:action}
t\cdot [v]=[(\d m_t)_p(v)], \quad t\in T_p, \quad v\in T_p\O^\omega,
\end{equation} where $m_t:S\to S$ denotes the action of $t$. Since $T_p$  is compact and abelian, its identity component $T_p^0$ is a torus. Therefore, the induced symplectic $T_p^0$-representation has an associated set of weights and an associated fan 
\[
\F_p:=\F_{\mathcal{S}\mathcal{N}_p}.
\] The momentum map of the induced symplectic $T^0_p$-action is the same as that of the $T_p$-action. So, the support of $\F_p$ (the cone generated by the weights) is the image of  $\mu_{\mathcal{S}\mathcal{N}_p}:{\mathcal{S}\mathcal{N}_p}\to \mathfrak{t}_p^*$ (see \eqref{eqn:quadmommap:linsymprep}), which we denote by $\Delta_p$. Furthermore, the following holds.
\begin{proposition}\label{prop:associatedfan:ind:orbstrat} For any two $p,q\in S$ in the same $T$-orbit type stratum, it holds that $\F_p=\F_q$. 
\end{proposition}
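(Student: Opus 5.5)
The plan is to show that the weight tuple of the symplectic normal representation of the torus $T_p^0$ is locally constant along an orbit type stratum. Since $\F_p$ is by definition the GKZ fan of this weight configuration, $\F_p=\F_q$ then follows.

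First, I fix the setting. Let $\Sigma_S$ be the $T$-orbit type stratum containing $p$ and $q$. By definition, all points of $\Sigma_S$ have the same isotropy group $H$, because for an abelian group conjugacy classes of subgroups are just subgroups. Moreover $\Sigma_S$ is connected, being a connected component of the set of points with isotropy $H$. In particular $T_p=T_q=H$, so $\F_p$ and $\F_q$ are both fans in the same space $\mathfrak{h}^*$, and it makes sense to compare them. Since the isotropy is constant along $\Sigma_S$, the orbits through points of $\Sigma_S$ have constant dimension $\dim T-\dim H$. Hence $T\O$ restricted to $\Sigma_S$ is a smooth subbundle of $TS|_{\Sigma_S}$, and so is its $\omega$-orthogonal $T\O^\omega$. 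The intersection $T\O\cap T\O^\omega=T\O$ also has constant rank, since $T$-orbits are isotropic (as $T$ is abelian). Therefore
\[
\mathcal{S}\mathcal{N}:=T\O^\omega/(T\O\cap T\O^\omega)\to \Sigma_S
\]
is a smooth symplectic vector bundle whose fibers are the $\mathcal{S}\mathcal{N}_p$. The group $H$ acts on it by fiberwise linear symplectic maps via \eqref{eqn:sympnormrep:action}, and this action covers the identity on $\Sigma_S$, since $H$ fixes $\Sigma_S$ pointwise.

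Next, I show that the weights are locally constant. By averaging over $H$ one can choose an $H$-invariant compatible complex structure on $\mathcal{S}\mathcal{N}$; alternatively, one can avoid complex structures and work with real isotypic components. For each $\alpha$ in the weight lattice of the torus $H^0$, consider the fiberwise endomorphism
\[
\Pi_\alpha=\int_{H^0}\overline{\chi_\alpha(h)}\,(h\cdot)\,\d h ,
\]
or its real analogue pairing $\pm\alpha$. This is a continuous family of projections onto the $\alpha$-isotypic subspace of each fiber. The rank of $\Pi_\alpha$ at a point equals its trace, which depends continuously on the point and takes integer values, so it is locally constant on $\Sigma_S$. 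Because $\Sigma_S$ is connected, the multiplicity of each weight $\alpha$ is the same at $p$ and at $q$. Since the weight tuple of a symplectic $T_p^0$-representation is determined by these multiplicities, $\mathcal{S}\mathcal{N}_p$ and $\mathcal{S}\mathcal{N}_q$ have the same weight tuple.

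Finally, the GKZ fan $\mathrm{Ch}(\mathcal{V})$ depends only on the vector configuration $\mathcal{V}$ of weights, so $\F_p=\F_q$. The main point requiring care is the first step, namely that $\mathcal{S}\mathcal{N}$ really forms a continuous $H$-equivariant vector bundle over the stratum. This relies on the constancy of the isotropy group along the stratum. Once that is in place, the rest is the standard rigidity of representations of compact groups.
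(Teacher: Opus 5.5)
Your proof is correct, but it gets local constancy along the stratum by a different mechanism than the paper.

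\textbf{The paper's route.} It also reduces the statement to showing that the isomorphism class of $(\mathcal{S}\mathcal{N}_p,\omega_p)$ is locally constant on the connected stratum. It obtains this from the Marle--Guillemin--Sternberg normal form: in the local model $T\times_{T_p}\mathcal{S}\mathcal{N}_p\times\mathfrak{t}_p^0$, nearby points of the same stratum have isomorphic symplectic normal representations. So the classes of the relation ``isomorphic symplectic normal representations'' are open in the stratum, and connectedness finishes the proof.

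\textbf{Your route.} You assemble the symplectic normal spaces into an $H$-equivariant symplectic vector bundle over the stratum, using only that the isotropy is constant there. You then use that the traces of the isotypic projections are continuous and integer-valued. In other words, you rely on the rigidity of compact group representations in continuous families.

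\textbf{What each buys.} Your argument avoids the normal form theorem and is self-contained, whereas the paper leaves the MGS details implicit. The paper's route is shorter once MGS is granted. It also directly gives an isomorphism of symplectic representations of the full, possibly disconnected, group $T_p$. Your trace argument would give that as well if you used the characters of $H$ instead of $H^0$ (with an $H$-invariant compatible complex structure, and then making the isomorphism unitary).

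\textbf{One correction.} The ``real analogue pairing $\pm\alpha$'' that you mention as an alternative only detects the combined multiplicity of $\alpha$ and $-\alpha$. The symplectic weight tuple, and hence $\F_p$, distinguishes $\alpha$ from $-\alpha$. So either stay with the $H$-invariant compatible complex structure, which fixes the signs, or add that for $\langle\alpha,\xi\rangle\neq 0$ the signature of the nondegenerate form $v\mapsto\omega(\xi\cdot v,v)$ on the $\pm\alpha$-isotypic subbundle is locally constant. That signature recovers the two multiplicities separately.
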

\begin{proof} As must be well-known, for all such points there is an isomorphism of symplectic representations $(\mathcal{S}\mathcal{N}_p,\omega_p)\cong (\mathcal{S}\mathcal{N}_q,\omega_q)$ of $T_p=T_q$. This can be seen by using the MGS normal form theorem \cite{GS84,Mar85} to show that, for any $T$-orbit type stratum $\Sigma$, all members of the equivalence relation
\[
p\sim q \iff (\mathcal{S}\mathcal{N}_p,\omega_p)\cong (\mathcal{S}\mathcal{N}_q,\omega_q)\text{ as symplectic representations of $T_p=T_q$}
\] on $\Sigma$ are open in $\Sigma$, so that there can be only one such member because $\Sigma$ is connected. The proposition is immediate from this. 
\end{proof} 
Because of this proposition, the fan $\mathcal{F}_p$ is the same for all points $p$ in a given $T$-orbit type stratum $\Sigma$. Therefore we also denote this fan by $\F_\Sigma$. Similarly, we denote by $\mathfrak{t}_\Sigma$ the isotropy Lie algebra of any point in $\Sigma$ and, accordingly, we let $\pi_\Sigma:\mathfrak{t}^*\to \mathfrak{t}^*_\Sigma$ denote the projection dual to the inclusion $\mathfrak{t}_\Sigma\hookrightarrow \mathfrak{t}$. Since $\mu$ is proper as map into its image, its fibers are compact and, hence, each of its fibers intersects only finitely many strata of the $T$-orbit type stratification $\mathcal{S}_T(S)$ of $S$. So, we can define the following.  
\begin{definition}\label{def:isofans} For a Hamiltonian $T$-space as above, we define its \textbf{isotropy fan} at a point $x$ in the momentum map image to be the fan in $\mathfrak{t^*}$ given by
\[
\F_x:=\bigcap_{\big\{\Sigma\in \mathcal{S}_T(S)\mid \Sigma\cap \mu^{-1}(x)\neq \emptyset\big\}} \pi_\Sigma^{-1}(\F_\Sigma). 
\] 
\end{definition} Here, the pre-image of a fan is meant cone-wise and the intersection of fans is meant in the sense of the following elementary lemma. 
\begin{lemma}\label{lemma:intersection:fans}
    Given polyhedral fans $\mathcal{F}_1$,...,$\mathcal{F}_n$ in a finite-dimensional real vector space $V$, their intersection
    \[
    \mathcal{F}_1\cap...\cap\mathcal{F}_n:=\{\sigma_1\cap...\cap\sigma_n\mid \sigma_i\in \mathcal{F}_i\}
    \] is again a polyhedral fan. Moreover, \[\mathrm{supp}(\mathcal{F}_1\cap...\cap\mathcal{F}_n)=\mathrm{supp}(\mathcal{F}_1)\cap...\cap\mathrm{supp}(\mathcal{F}_n),\] and each relatively open cone of $\mathcal{F}_1\cap ...\cap{\F}_n$ is of the form $\mathring{\sigma}_1\cap ...\cap \mathring{\sigma}_n$ with $\mathring{\sigma}_i$ a relatively open cone of $\mathcal{F}_i$.
\end{lemma}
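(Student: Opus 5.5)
By induction on $n$ it suffices to treat $n=2$. Indeed, $\mathcal{F}_1\cap\cdots\cap\mathcal{F}_n=(\mathcal{F}_1\cap\cdots\cap\mathcal{F}_{n-1})\cap\mathcal{F}_n$ as collections of cones, and the support and relatively-open-cone statements are compatible with this regrouping. So let $\mathcal{F}_1,\mathcal{F}_2$ be fans and put $\mathcal{F}:=\{\sigma_1\cap\sigma_2\mid\sigma_i\in\mathcal{F}_i\}$. Each $\sigma_1\cap\sigma_2$ is a polyhedral cone, being an intersection of finitely many closed half-spaces and subspaces. It is non-empty since it contains $0$, and $\mathcal{F}$ is finite.

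\emph{Fan axiom (i).} I would use the standard fact that every face of $\sigma_1\cap\sigma_2$ is of the form $\tau_1\cap\tau_2$ with $\tau_i$ a face of $\sigma_i$. To see this, let $F$ be the face cut out by a supporting functional $\ell\geq 0$ on $\sigma_1\cap\sigma_2$. By Farkas' lemma (polyhedrality) write $\ell=\ell_1+\ell_2$ with $\ell_i\geq0$ on $\sigma_i$. Put $\tau_i:=\sigma_i\cap\ker\ell_i$. Then on $\sigma_1\cap\sigma_2$ we have $\ell=0$ iff $\ell_1=\ell_2=0$, so $F=\tau_1\cap\tau_2$. Since faces of $\sigma_i$ lie in $\mathcal{F}_i$, we get $F\in\mathcal{F}$.

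\emph{Fan axiom (ii).} Given $\sigma=\sigma_1\cap\sigma_2$ and $\sigma'=\sigma_1'\cap\sigma_2'$, we have $\sigma\cap\sigma'=(\sigma_1\cap\sigma_1')\cap(\sigma_2\cap\sigma_2')$. Here $\sigma_i\cap\sigma_i'$ is a face of both $\sigma_i$ and $\sigma_i'$. Intersecting a face $\rho_i$ of $\sigma_i$ with $\sigma_j$ yields a face of $\sigma_1\cap\sigma_2$: take a supporting functional of $\rho_i$ on $\sigma_i$; it is nonnegative on $\sigma_1\cap\sigma_2$ and cuts out $\rho_i\cap\sigma_j$. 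An intersection of faces is a face, so $\sigma\cap\sigma'=(\sigma_1\cap\sigma_1')\cap\sigma_2\cap(\sigma_2\cap\sigma_2')$ is a face of $\sigma$, and symmetrically of $\sigma'$.

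\emph{Support.} The inclusion $\subseteq$ is trivial. For $\supseteq$, a point $x$ in both supports lies in some $\sigma_1\in\mathcal{F}_1$ and some $\sigma_2\in\mathcal{F}_2$, hence in $\sigma_1\cap\sigma_2$.

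\emph{Relatively open cones.} Since each fan's relatively open cones partition its support, the sets $\mathring{\sigma}_1\cap\mathring{\sigma}_2$ (over pairs with non-empty intersection) partition $\mathrm{supp}(\mathcal{F})$. I then want to show each non-empty $\mathring{\sigma}_1\cap\mathring{\sigma}_2$ is the relative interior of $\sigma_1\cap\sigma_2$. This is the standard fact that if the relative interiors of two convex sets meet, then $\mathrm{relint}(C_1\cap C_2)=\mathrm{relint}\,C_1\cap\mathrm{relint}\,C_2$ (Rockafellar, Thm.~6.5). Every cone of $\mathcal{F}$ equals some $\tau_1\cap\tau_2$ with $\mathring{\tau}_1\cap\mathring{\tau}_2\neq\emptyset$: take $\tau_i$ to be the smallest face of $\sigma_i$ containing a relative interior point of $\sigma_1\cap\sigma_2$. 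So the relatively open cones of $\mathcal{F}$ are exactly these sets.

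The main obstacle is the face-description step in axiom (i), namely the Farkas decomposition of a supporting functional. I would cite it as the standard fact that the dual of an intersection of polyhedral cones is the sum of their duals, rather than reprove it.
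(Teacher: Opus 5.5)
Your proof is correct. The paper calls this lemma ``elementary'' and gives no proof, so there is no argument of the author's to compare with; your proposal fills that gap.

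One step needs an extra line. In the last part, the equality $\tau_1\cap\tau_2=\sigma_1\cap\sigma_2$ is not automatic from the choice of $\tau_i$. It follows from your axiom (ii) argument: $\tau_1\cap\tau_2=(\tau_1\cap\sigma_2)\cap(\sigma_1\cap\tau_2)$ is a face of $\sigma_1\cap\sigma_2$. Since this face contains a relative interior point of $\sigma_1\cap\sigma_2$, it is the whole cone.

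You can also drop the Farkas step you flagged as the main obstacle, by reusing the same mechanism. Let $F$ be a face of $\sigma_1\cap\sigma_2$. Pick $x\in\mathring{F}$ and let $\tau_i$ be the smallest face of $\sigma_i$ containing $x$. Then $\tau_1\cap\tau_2$ is a face of $\sigma_1\cap\sigma_2$ by the observation above. By Rockafellar, Thm.~6.5, its relative interior is $\mathring{\tau}_1\cap\mathring{\tau}_2$, which contains $x$. Relative interiors of distinct faces of a polyhedral cone are disjoint, so $F=\tau_1\cap\tau_2$.

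With this change, the whole lemma rests on two facts. The first is that $\rho_1\cap\sigma_2$ is a face of $\sigma_1\cap\sigma_2$ whenever $\rho_1$ is a face of $\sigma_1$, which is your one-line supporting-functional argument. The second is the relative-interior formula for intersections. Your duality route $(\sigma_1\cap\sigma_2)^\vee=\sigma_1^\vee+\sigma_2^\vee$ is also fine, but it uses more than is needed.
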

In view of this lemma and the proposition below, $\Delta$ is a locally polyhedral set and the support of $\F_{x_0}$ is the cone $\sigma_{x_0}(\Delta)$. So, for each $x_0\in \Delta$, the germ of $\Delta$ at $x_0$ equals that of $x_0+\mathrm{Supp}(\mathcal{F}_{x_0})$.
\begin{proposition}\label{prop:cone:mommapimage} The image $\Delta$ of $\mu$ is a locally polyhedral subset of $\mathfrak{t}^*$. Given $x\in \Delta$, the cone $\sigma_x(\Delta)$ on which $\Delta$ is modelled near $x$ equals $\pi_p^{-1}(\Delta_p)$ for any choice of $p\in \mu^{-1}(x)$, where $\pi_p:\mathfrak{t}^*\to \mathfrak{t}^*_p$ is the projection dual to the inclusion $\mathfrak{t}_p\hookrightarrow \mathfrak{t}$.
\end{proposition}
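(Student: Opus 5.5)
The plan is to use the Marle--Guillemin--Sternberg (MGS) normal form at a point $p\in\mu^{-1}(x)$ to describe $\mu$ near the orbit $T\cdot p$. This gives the germ of $\mu(S)$ at $x$ near the image of a neighbourhood of that orbit. Then compactness and connectedness of the fiber $\mu^{-1}(x)$ will show that the germ is the same for every $p$ in the fiber.

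\textbf{Step 1 (local model).} Fix $p\in\mu^{-1}(x)$, write $H=T_p$, and choose a splitting $\mathfrak{t}^*=\mathfrak{t}_p^0\oplus\mathfrak{t}_p^*$, i.e.\ a complement identifying $\mathfrak{t}_p^*$ with a subspace of $\mathfrak{t}^*$. By the MGS normal form, a $T$-invariant neighbourhood of $T\cdot p$ is equivariantly symplectomorphic to a neighbourhood of the zero section in $T\times_H(\mathfrak{t}_p^0\times\mathcal{S}\mathcal{N}_p)$. On this model
\[
\mu([t,\nu,v])=x+\nu+\mu_{\mathcal{S}\mathcal{N}_p}(v).
\]
Consequently, for any sufficiently small $T$-invariant neighbourhood $W$ of $T\cdot p$, the image $\mu(W)$ contains $(x+\pi_p^{-1}(\Delta_p))\cap U'$ for some neighbourhood $U'$ of $x$, and is contained in $x+\pi_p^{-1}(\Delta_p)$. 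Here one uses that $\mu_{\mathcal{S}\mathcal{N}_p}$ is homogeneous quadratic, so that a neighbourhood of $0$ maps onto a neighbourhood of $0$ in the cone $\Delta_p$. It also uses $\pi_p^{-1}(\Delta_p)=\mathfrak{t}_p^0+\Delta_p$, where $\Delta_p$ is viewed in $\mathfrak{t}^*$ via the splitting. The cone $C_p:=\pi_p^{-1}(\Delta_p)$ is polyhedral because $\Delta_p$ is generated by finitely many weights.

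\textbf{Step 2 (independence of $p$).} By Step 1, $C_p$ is the tangent cone at $x$ of the germ of $\mu(W_p)$. The main obstacle is to show that $C_p=C_q$ for $p,q$ in the same fiber. I would argue via convexity of the local images, in the spirit of the local convexity theorems. Every point of $\mu^{-1}(x)$ has such a neighbourhood $W_p$, and the fiber is compact, so finitely many $W_{p_1},\dots,W_{p_k}$ cover it. Properness of $\mu$ then gives a neighbourhood $U$ of $x$ with $\mu^{-1}(U)\subset\bigcup_i W_{p_i}$. Hence
\[
\Delta\cap U=\bigcup_i\bigl(x+C_{p_i}\bigr)\cap U .
\]
It remains to see that this finite union of cones coincides with each individual $C_{p_i}$. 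For this I would use connectedness of the fiber, as granted in this subsection. If $W_{p}\cap W_q\cap\mu^{-1}(x)\neq\emptyset$, pick $r$ in that intersection. The image of a small invariant neighbourhood of $T\cdot r$ has germ $x+C_r$, and it is contained in both $x+C_p$ and $x+C_q$. Conversely, the model at $p$, re-centred at the point $r$, shows that $C_r$ is the cone of $\mu(W_p)$ at $x$, so $C_r=C_p$. The re-centring works because in the model the fiber point $r=[t,0,v]$ with $\mu_{\mathcal{S}\mathcal{N}_p}(v)=0$ lies in the null cone, and the image cone $\mathfrak{t}_r^0+\Delta_r$ at such points equals $\mathfrak{t}_p^0+\Delta_p$. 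This last equality is the delicate linear-algebra claim, and I would check it directly in the weight description \eqref{eqn:weight:description:quadmommap}. For $v=(z_i)$ with $\sum|z_i|^2\alpha_i=0$, the weights with $z_i\neq0$ positively span a subspace $L$. This gives $\mathfrak{t}_r^0\supset L$ and $\Delta_r+L=\Delta_p$, because cones containing a subspace in their positive span absorb it. Chaining along the connected fiber then gives $C_p=C_q$ for all $p,q\in\mu^{-1}(x)$.

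\textbf{Step 3 (conclusion).} With all $C_{p_i}$ equal to a single cone $C$, we get $\Delta\cap U=(x+C)\cap U$ with $C$ polyhedral. Thus $\Delta$ is locally polyhedral and $\sigma_x(\Delta)=\pi_p^{-1}(\Delta_p)$ for any $p\in\mu^{-1}(x)$. I expect the null-cone identity in Step 2 to be where the real work lies.
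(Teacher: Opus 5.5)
Your proof is correct. The paper gives no argument for this proposition; it only points to the local-to-global convexity arguments of Hilgert--Neeb--Plank, Bjorndahl--Karshon and Birtea--Ortega--Ratiu, so the comparison is with that route.

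In those references, the MGS model shows that near $T\cdot p$ the map $\mu$ is \emph{open} onto a neighbourhood of $x$ in $x+C_p$. The local-to-global principle then yields, at once, that the fibers are connected and that $\mu\colon S\to\Delta$ is open. The germ statement follows immediately: for every $p\in\mu^{-1}(x)$, $\mu(W_p)$ is then a neighbourhood of $x$ both in $\Delta$ and in $x+C_p$.

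You instead take connectedness of the fibers as an input, which is legitimate since it is assumed in this subsection. You then show by hand that $r\mapsto C_r$ is locally constant on $\mu^{-1}(x)$; this is exactly the point that openness of the local model handles in the references.

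Your null-cone identity does hold, but to make it complete you still need the symplectic normal representation at $r=[1,0,v]$:
\begin{itemize}
\item[(i)] One has $\mathfrak{t}_r=\mathfrak{h}_v$ and $\mathcal{SN}_r\cong(\mathfrak{h}\cdot v)^{\omega_p}/(\mathfrak{h}\cdot v)$ as symplectic $H_v$-representations. Hence the non-zero weights of $\mathcal{SN}_r$ are the non-zero restrictions $\alpha_j|_{\mathfrak{h}_v}$ with $z_j=0$.
\item[(ii)] The subspace $L=\mathrm{Span}_\R\{\alpha_i\mid z_i\neq 0\}$ is the kernel of $\mathfrak{h}^*\to\mathfrak{h}_v^*$. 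So $\Delta_p=L+\mathrm{Cone}\{\alpha_j\mid z_j=0\}$ is the full preimage of $\Delta_r$ under this restriction.
\item[(iii)] Since $\pi_r$ factors through $\pi_p$, this gives $\pi_r^{-1}(\Delta_r)=\pi_p^{-1}(\Delta_p)$.
\end{itemize}
This is the precise form of your $\Delta_r+L=\Delta_p$, which as written mixes $\mathfrak{h}_v^*$ and $\mathfrak{h}^*$.

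You can avoid computing $\mathcal{SN}_r$ altogether, and this is the concrete form of the openness argument. In the model at $p$, for $u$ near $v$:
\begin{itemize}
\item[(i)] the quantities $|u_i|^2$ with $z_i\neq 0$ range over neighbourhoods of $|z_i|^2>0$;
\item[(ii)] the quantities $|u_j|^2$ with $z_j=0$ range over $[0,\delta[$.
\end{itemize}
Using $\sum_i|z_i|^2\alpha_i=0$, the map $\mu_{\mathcal{SN}_p}$ therefore sends every neighbourhood of $v$ onto a neighbourhood of $0$ in $L+\mathrm{Cone}\{\alpha_j\mid z_j=0\}=\Delta_p$. Hence the image of any small invariant neighbourhood of $T\cdot r$ has germ $x+C_p$ at $x$. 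By your Step 1 applied at $r$ and uniqueness of germ cones, $C_r=C_p$.
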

This well-known proposition holds because of the topological assumptions that we made on $\mu$. A proof is readily obtained by adapting the arguments in \cite{BiOrRa09,BjKa10, HiNePl93}. Having defined the isotropy fans, we now prove the following and then complete the proof of Theorem \ref{thm:momim:locpoldec} and Theorem \ref{thm:momim:stratcomparison}. 
\begin{proposition}\label{prop:isofans:locmodhamstrat} For $\mu:(S,\omega)\to \mathfrak{t}^*$ and $\Delta$ as above, the stratified subset $(\Delta,\mathcal{S}_\mathrm{Ham}(\Delta))$ of $\mathfrak{t}^*$ is locally modelled on the isotropy fans in Definition \ref{def:isofans}.
\end{proposition}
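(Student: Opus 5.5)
We verify the tangent-space criterion of Proposition \ref{prop:affstrat:locmodfan:tangentcrit}. Fix $x_0\in\Delta$. It suffices to find a convex open neighbourhood $U$ of $x_0$ in $\mathfrak{t}^*$ such that two conditions hold:
\begin{itemize}
\item[(1)] $\Delta\cap U=(x_0+\mathrm{supp}(\F_{x_0}))\cap U$;
\item[(2)] for every $x\in\Delta\cap U$, the stratum $\Sigma\in\mathcal{S}_\mathrm{Ham}(\Delta)$ through $x$ has $T_x\Sigma=T_x(x_0+\mathring{\sigma})$, where $\mathring{\sigma}$ is the relatively open cone of $\F_{x_0}$ containing $x-x_0$.
\end{itemize}
Condition (1) follows from Proposition \ref{prop:cone:mommapimage} together with Lemma \ref{lemma:intersection:fans}. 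Each $\pi_\Sigma^{-1}(\F_\Sigma)$ has support $\pi_p^{-1}(\Delta_p)=\sigma_{x_0}(\Delta)$ for $p\in\Sigma\cap\mu^{-1}(x_0)$. Hence the intersection fan $\F_{x_0}$ has support $\sigma_{x_0}(\Delta)$, and we shrink $U$ so that $\Delta\cap U=(x_0+\sigma_{x_0}(\Delta))\cap U$. So the real content is condition (2), which by \eqref{eqn:tngtsp:hamstrat} says
\[
\bigcap_{q\in\mu^{-1}(x)}\mathfrak{t}_q^0\;=\;\mathrm{Span}_\R(\mathring{\sigma})\quad\text{for }x\in U.
\]

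To compute the left side near $x_0$ we use the MGS normal form. Let $p_1,\dots,p_k$ be representatives of the finitely many orbit type strata $\Sigma_1,\dots,\Sigma_k$ meeting the compact fibre $\mu^{-1}(x_0)$. Around each orbit $T\cdot p_j$ there is a $T$-invariant neighbourhood equivariantly symplectomorphic to a neighbourhood of the zero section in $T\times_{T_{p_j}}(\mathfrak{t}_{p_j}^0\times\mathcal{SN}_{p_j})$. On this model the momentum map is
\[
[t,\eta,v]\mapsto x_0+\eta+\iota_j(\mu_{\mathcal{SN}_{p_j}}(v)),
\]
with $\iota_j$ a splitting of $\pi_{\Sigma_j}$. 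Properness of $\mu$ and connectedness of the fibres imply that finitely many such neighbourhoods (one may need several per stratum, but they all have the same model by Proposition \ref{prop:associatedfan:ind:orbstrat}) cover $\mu^{-1}(U)$ once $U$ is small. Thus for $x\in U$, the set $\mu^{-1}(x)$ is the union over $j$ of its intersections with the models.

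In the $j$-th model, a point $q=[t,\eta,v]$ with $\mu(q)=x$ has $\mathfrak{t}_q=(\mathfrak{t}_{p_j})_v$. Moreover $v$ ranges over $\mu_{\mathcal{SN}}^{-1}(\pi_{\Sigma_j}(x-x_0))$, at least for $v$ small; by homogeneity of the quadratic map and the conical structure this restriction is harmless after shrinking $U$, since small fibres over points in a fixed relative cone carry the same isotropy data. Hence
\[
\bigcap_{q}\mathfrak{t}_q^0=\pi_{\Sigma_j}^{-1}\Big(\bigcap_{v}(\mathfrak{t}_{p_j})_v^0\Big)=\pi_{\Sigma_j}^{-1}\big(T\,\mathring{\sigma}_j\big),
\]
using Proposition \ref{prop:linearchamberdec} for the $T_{p_j}^0$-representation $\mathcal{SN}_{p_j}$. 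Here $\mathring{\sigma}_j$ is the relatively open cone of $\F_{\Sigma_j}$ through $\pi_{\Sigma_j}(x-x_0)$, and the tangent space is a linear subspace, namely the span of that cone. Intersecting over $j$ gives $\bigcap_j\pi_{\Sigma_j}^{-1}(\mathrm{Span}\,\mathring{\sigma}_j)$. This equals the span of $\bigcap_j\pi_{\Sigma_j}^{-1}(\mathring{\sigma}_j)$, which by Lemma \ref{lemma:intersection:fans} is the relatively open cone $\mathring{\sigma}$ of $\F_{x_0}$ through $x-x_0$. The span of an intersection of relatively open convex sets that share a point equals the intersection of their spans, because each is open in its span and they meet.

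I expect the main obstacle to be the localisation step: justifying that for $x$ near $x_0$ the whole fibre $\mu^{-1}(x)$ lies in the union of the local models, and that within each model only small $v$ occur. The first claim uses properness: if it failed, a sequence $q_n\in\mu^{-1}(x_n)$ outside the models with $x_n\to x_0$ would accumulate at a point of $\mu^{-1}(x_0)$, which is a contradiction. The second needs care because the linear fibre $\mu_{\mathcal{SN}}^{-1}(y)$ is unbounded. Here I would use that the set of isotropy algebras $(\mathfrak{t}_{p_j})_v$ on $\mu_{\mathcal{SN}}^{-1}(y)$ is invariant under scaling $y\mapsto sy$, $v\mapsto\sqrt{s}v$. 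I would also use that every coordinate-support pattern $I$ with $y\in\mathrm{Cone}^\circ\{\alpha_i\}_{i\in I}$ is realised by arbitrarily small $v$ once $y$ is small, so restricting to small $v$ yields the same intersection as in Proposition \ref{prop:linearchamberdec}.
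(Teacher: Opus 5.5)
Your overall architecture matches the paper's: the tangent-space criterion of Proposition \ref{prop:affstrat:locmodfan:tangentcrit}, finitely many MGS charts covering $\mu^{-1}(U)$, Proposition \ref{prop:linearchamberdec} in each chart, and Lemma \ref{lemma:intersection:fans} to assemble the pieces. Where you differ is the localisation step.

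The paper never works with the single fibre $\mu^{-1}(x)$ inside a chart. It chooses $U$ as in Lemma \ref{lemma:affstratlocconvnhood}, so that $]x_0,x]$ lies in one stratum of $\mathcal{S}_\mathrm{Ham}(\Delta)$. It then proves \eqref{eqn:intersectionequality} with the right-hand side taken over $\widehat U_i\cap\mu^{-1}(]x_0,x])$. There the exact rescaling $([1,v],\alpha)\mapsto([1,sv],s^2\alpha)$ pushes any point of the unbounded linear fibre into the chart, landing over $x_0+s^2(x-x_0)\in\,]x_0,x]$. Finally, constancy of \eqref{eqn:tngtsp:hamstrat} along a stratum collapses the segment back to $x$. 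This needs no estimates, at the price of using the affine stratification a second time.

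Your route keeps the fibre over $x$ fixed. Everything then rests on one claim: every support pattern $I$ occurring over $y=\pi_{\Sigma_j}(x-x_0)$ is realised by some $v$ with $|v|<\epsilon$ once $|y|$ is small. That claim is true, but the justification you indicate does not prove it. The scaling $v\mapsto\sqrt{s}\,v$ relates the fibre over $y$ to the fibre over $sy$, not to small points over the same $y$. Moreover, the unit cross-section of a relatively open cone is not compact, so a bound along each ray is not uniform.

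A clean fix uses Carath\'{e}odory's theorem.
\begin{itemize}
\item[(1)] Any $y'\in\mathrm{Cone}\{\alpha_i\}_{i\in I}$ is a nonnegative combination of a linearly independent subfamily. Hence $y'=\sum_{i\in I}c_i\alpha_i$ with $c_i\ge 0$ and $\sum c_i\le C|y'|$, where $C$ depends only on the weights.
\item[(2)] If $y$ lies in the relative interior, put $w:=\sum_{i\in I}\alpha_i$. Then $y-\lambda w$ still lies in $\mathrm{Cone}\{\alpha_i\}_{i\in I}$ for small $\lambda>0$.
\item[(3)] Applying (1) to $y'=y-\lambda w$ gives $y=\sum_{i\in I}t_i\alpha_i$ with all $t_i\ge\lambda>0$ and $\sum t_i\le C|y|+\lambda(C|w|+|I|)$.
\item[(4)] Taking $|z_i|^2=t_i$ yields $v$ with support exactly $I$, $\mu_{\mathcal{SN}}(v)=y$, and $|v|^2$ as small as you like once $|y|$ is small.
\end{itemize}
Since $(\mathfrak{t}_{p_j})_v$ depends only on the support of $v$, and each chart contains a uniform tube $\{|\eta|<\epsilon,\ |v|<\epsilon\}$ (by $T$-invariance and compactness of $T$), your chart-by-chart equality follows.

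With this lemma your argument is complete. It is also slightly more self-contained: it does not need Lemma \ref{lemma:affstratlocconvnhood}, and it shows that the part of the fibre over $x$ itself already realises all isotropy types within each chart. The paper's segment trick is precisely what lets it avoid this linear-programming estimate.
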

To prove this proposition, we will use the following lemma.
\begin{lemma}\label{lemma:affstratlocconvnhood} Let $V$ be a finite-dimensional real vector space, $X$ a locally convex subset of $V$ and $\mathcal{S}$ an affine stratification of $X$. Then every $x_0\in X$ admits an arbitrarily small convex open neighbourhood $U$ in $X$ such that for every $x\in U$ the half-open line segment
\[
]x_0,x]:=\{x_0+s(x-x_0)\mid s\in ]0,1]\}
\] is contained in the stratum of $\mathcal{S}$ through $x$. 
\end{lemma}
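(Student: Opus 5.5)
The plan is to shrink to a neighbourhood $U$ of $x_0$ that only meets strata whose closure contains $x_0$. Then any failure of the segment property produces a ``last bad point'' on the segment, and the affine structure of the stratum containing that point contradicts it being last.

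\textbf{Choice of $U$.} By local finiteness of $\mathcal{S}$, there is an open neighbourhood $W$ of $x_0$ in $V$ meeting only finitely many strata $\Sigma_1,\dots,\Sigma_k$. For each $\Sigma_j$ with $x_0\notin\overline{\Sigma}_j$, the closure $\overline{\Sigma}_j$ (closure in $X$) misses some neighbourhood of $x_0$. Removing these finitely many neighbourhoods, I get a smaller open $W'$ around $x_0$ that only meets strata whose closure contains $x_0$. Since $X$ is locally convex, I can take $U\subset W'\cap X$ to be an arbitrarily small convex open neighbourhood of $x_0$ in $X$.

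\textbf{Setting up the segment.} Let $x\in U$, let $\Sigma$ be the stratum through $x$, and let $A$ be its affine hull. Then $x_0\in\overline{\Sigma}\subset A$, because $A$ is closed. Hence the segment $[x_0,x]$ lies in $A$, and it also lies in $U\subset X$ by convexity of $U$. The set
\[
B:=\{s\in\,]0,1]\mid x_0+s(x-x_0)\notin\Sigma\}
\]
is closed in $]0,1]$, since $\Sigma$ is open in $A$ and the segment lies in $A$. It does not contain $1$. Suppose $B\neq\emptyset$. Then $B$ is bounded away from $1$; the case $x=x_0$ is trivial, and otherwise some interval $]1-\varepsilon,1]$ avoids $B$. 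So $s^*:=\sup B$ lies in $B$ with $s^*<1$, and every point $x_0+s(x-x_0)$ with $s\in\,]s^*,1]$ lies in $\Sigma$.

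\textbf{Contradiction.} Put $y:=x_0+s^*(x-x_0)$. Then $y\neq x_0$, $y\in U$, and $y\notin\Sigma$, so $y$ lies in some stratum $\Sigma'\neq\Sigma$, with affine hull $A'$. By the choice of $U$ we have $x_0\in\overline{\Sigma'}\subset A'$. Since both $x_0$ and $y$ lie in $A'$ and are distinct, the whole line through $x_0$ and $x$ is contained in $A'$. Because $\Sigma'$ is open in $A'$ (the stratification is affine), all points $x_0+s(x-x_0)$ with $s>s^*$ close enough to $s^*$ lie in $\Sigma'$. These same points lie in $\Sigma$ by the choice of $s^*$, which contradicts $\Sigma\cap\Sigma'=\emptyset$. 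Hence $B=\emptyset$, that is, $]x_0,x]\subset\Sigma$.

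\textbf{Main obstacle.} The delicate step is the choice of $U$: it must exclude strata, such as a lower-dimensional stratum ending at a point away from $x_0$, whose closures do not contain $x_0$. This exclusion is exactly what guarantees $x_0\in A'$ in the final step. Once that is ensured, the argument only uses that strata are open in their affine hulls.
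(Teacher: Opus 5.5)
Your proof is correct and takes essentially the same approach as the paper. You choose $U$ the same way, so that it only meets strata whose closure contains $x_0$, and you use the same key fact: any stratum meeting $]x_0,x]$ has an affine hull containing the whole segment, so its preimage in $]0,1]$ is open. The paper concludes directly from connectedness of $]0,1]$, whereas your supremum ("last bad point") argument unrolls that connectedness step by hand.
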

\begin{proof}[Proof of Lemma \ref{lemma:affstratlocconvnhood}] Let $x_0\in X$ and let $W$ be an open in $X$ around $x_0$. Since $\mathcal{S}$ is locally finite, there is an open $U$ in $W$ such that $x_0\in \overline{\Sigma}$ for each stratum $\Sigma$ that intersects $U$. Because $X$ is locally convex, after shrinking $U$ we can arrange it to be convex. Let $x\in U$. Consider the map $\gamma:]0,1]\to X$, $\gamma(s):=x_0+s(x-x_0)$. To prove the lemma, we will show that $\gamma^{-1}(\Sigma)$ is open in $]0,1]$ for each stratum $\Sigma$, so that by connectedness of $]0,1]$ the image $]x_0,x]$ of $\gamma$ must be contained in a single such stratum. To this end, let $\Sigma\in \mathcal{S}$. We can suppose that $\gamma^{-1}(\Sigma)\neq\emptyset$. Then there is an $s\in ]0,1]$ such that $\gamma(s)\in \Sigma$. Since, moreover, $\gamma(s)\in U$ by convexity of $U$, the stratum $\Sigma$ intersects $U$ and, so, $x_0\in \overline{\Sigma}$. Since $\mathcal{S}$ is affine, $\Sigma$ is open in its affine hull $A$. Since $A$ is affine, it is closed in $V$ and so $\overline{\Sigma}\subset A$. Therefore, both $x_0$ and $\gamma(s)$ are contained in $A$. Since $A$ is an affine subset of $V$ and $s>0$, it follows that, in fact, $]x_0,x]\subset A$. So, $\gamma$ defines a map into $A$. Since this is continuous and $\Sigma$ is open in $A$, it follows that $\gamma^{-1}(\Sigma)$ is open in $]0,1]$, as was to be shown. 
\end{proof}
\begin{proof}[Proof of Proposition \ref{prop:isofans:locmodhamstrat}] Let $x_0\in \Delta$. As discussed after Lemma \ref{lemma:intersection:fans}, the germ of $\Delta$ at $x_0$ equals that of $x_0+\mathrm{Supp}(\mathcal{F}_{x_0})$. Because of this, Proposition \ref{prop:affstrat:locmodfan:tangentcrit} and Proposition \ref{prop:piecewise-affine-cover}, it is enough to show that $x_0$ admits an open neighbourhood $U$ such that for each relatively open chamber $\mathring{\sigma}$ of $\F_{x_0}$ and each $x\in (x_0+\mathring{\sigma})\cap U$ the tangent space to $x_0+\mathring{\sigma}$ at $x$ is given by
\begin{equation}\label{eqn:locmodfan:mommap:0}
T_x(x_0+\mathring{\sigma})=\bigcap_{p\in \mu^{-1}(x)} \mathfrak{t}_p^0.
\end{equation}
To this end note that, since $\mu$ is proper as map onto its image, there is a convex open $U$ in $\Delta$ around $x_0$ that satisfies the property in Lemma \ref{lemma:affstratlocconvnhood} with respect to the affine stratification $\mathcal{S}_\mathrm{Ham}(\Delta)$, and for which there are $p_1,...,p_n\in \mu^{-1}(x_0)$ and $T$-invariant open $\widehat{U}_i$ around $p_i$ in $S$ such that
\begin{itemize}
    \item[(i)] on each $\widehat{U}_i$ there is an isomorphism of Hamiltonian $T$-spaces $\psi_i$ that transforms the Hamiltonian $T$-space $\mu$ into its MGS local normal form around the $T$-orbit through $p_i$ \cite{GS84,Mar85},
\item[(ii)] $\mu^{-1}(U)\subset \bigcup_{i=1}^n\widehat{U}_i$,
\item[(iii)] for each $\Sigma\in \mathcal{S}_T(S)$ that intersects $\mu^{-1}(x_0)$ some $p_i$ lies in $\Sigma$.
 
\end{itemize} We will show that \eqref{eqn:locmodfan:mommap:0} holds for this open $U$. Consider a relatively open chamber $\mathring{\sigma}$ of $\mathcal{F}_{x_0}$ and some $x\in (x_0+\mathring{\sigma})\cap U$. For $\Sigma\in \mathcal{S}_T(S)$ such that $\Sigma\cap\mu^{-1}(x_0)\neq\emptyset$, let $\mathring{\sigma}_\Sigma$ denote the relatively open cone of $\mathcal{F}_\Sigma$ that contains $\pi_\Sigma(x-x_0)$. In view of Lemma \ref{lemma:intersection:fans}, $\mathring{\sigma}$ is the intersection of these relatively open cones. Because of this and (iii) above, 
\begin{equation*}
T_x\big(x_0+\mathring{\sigma})=\bigcap_{\{\Sigma\in\mathcal{S}_T(S)\mid \Sigma\cap \mu^{-1}(x_0)\neq\emptyset\}} T_x\big(x_0+\pi_{\Sigma}^{-1}(\mathring{\sigma}_\Sigma)\big) =\bigcap_{i=1}^n T_x\big(x_0+\pi_{p_i}^{-1}(\mathring{\sigma}_i)\big),
\end{equation*}
where $\mathring{\sigma}_i$ denotes the relatively open cone of $\mathcal{F}_{p_i}$ that contains $\pi_{p_i}(x-x_0)$. 
Further note that, in view of Proposition \ref{prop:linearchamberdec},
\[
T_x\big(x_0+\pi_{p_i}^{-1}(\mathring{\sigma}_i)\big)=\pi_{p_i}^{-1}\big(T_{\pi_{p_i}(x-x_0)}\mathring{\sigma_i}\big)=\bigcap_{v\in \mu^{-1}_{\mathcal{SN}_{p_i}}(\pi_{p_i}(x-x_0))} (\mathfrak{t}_{p_i})_v^0
\] for each $p_i$, where the annihilator of the isotropy Lie algebra $(\mathfrak{t}_{p_i})_v$ of the $T_{p_i}$-action on $\mathcal{SN}_{p_i}$ is that in $\mathfrak{t}^*$. Using (i) above we will now  show that
\begin{equation}\label{eqn:intersectionequality}
\bigcap_{v\in \mu^{-1}_{\mathcal{SN}_{p_i}}(\pi_{p_i}(x-x_0))} (\mathfrak{t}_{p_i})_v^0=\bigcap_{p\in \widehat{U}_i \cap \mu^{-1}(]x_0,x])} \mathfrak{t}_p^0. 
\end{equation} For this, recall that the isomorphism $\psi_i$ in (i) above (in particular) is a $T$-equivariant diffeomorphism from $\widehat{U}_i$ onto a $T$-invariant open neighbourhood of $([1,0],0)$ in $ (T\times_{T_{p_i}} \mathcal{SN}_{p_i})\times \mathfrak{t}_{p_i}^0$. Here  $T\times_{T_{p_i}} \mathcal{SN}_{p_i}$ is the quotient of $T\times\mathcal{SN}_{p_i}$ under the diagonal $T_{p_i}$-action (where $T_{p_i}$ acts on $T$ by translation and on $\mathcal{SN}_{p_i}$ as in \eqref{eqn:sympnormrep:action}) equipped with the $T$-action on $(T\times_{T_{p_i}} \mathcal{SN}_{p_i})\times \mathfrak{t}_{p_i}^0$ given by $t\cdot ([\tau,v],\alpha)=([t\tau,v],\alpha)$. Under $\psi_i$, the momentum map $\mu$ becomes identified with (the restriction to the image of $\psi_i$ of) a map of the form
\[
\mu_i: (T\times_{T_{p_i}} \mathcal{SN}_{p_i})\times \mathfrak{t}_{p_i}^0\to \mathfrak{t}^*, \quad ([t,v],\alpha)\mapsto x_0+\alpha+\mathfrak{p}_i\big(\mu_{\mathcal{SN}_{p_i}}(v)\big),
\] where $\mu_{\mathcal{SN}_{p_i}}$ is as in \eqref{eqn:quadmommap:linsymprep} and $\mathfrak{p}_i:\mathfrak{t}^*_{p_i}\to \mathfrak{t}^*$ is a linear section of $\pi_{p_i}$. Using this it follows that if $\beta$ belongs to the left hand side of \eqref{eqn:intersectionequality}, then for any $p\in \widehat{U}_i$ with $\mu(p)\in ]x_0,x]$, writing $\psi(p)=([\tau,v],\alpha)$ it holds that $\mu_i([\tau,v],\alpha)=x_0+s(x-x_0)$ for some $s\in ]0,1]$ and, hence, $\mu_{\mathcal{SN}_{p_i}}(\tfrac{1}{\sqrt{s}}v)=\pi_{p_i}(x-x_0)$. Since $(T_{p_i})_{\delta v}=(T_{p_i})_v$ for all $\delta\neq 0$ (by linearity of the $T_{p_i}$-action on $\mathcal{SN}_{p_i}$) and $T_{([\tau,\alpha],v)}=(T_{p_i})_v$, it follows that  
\[
\beta\in (\mathfrak{t}_{p_i})^0_{\tfrac{1}{\sqrt{s}}v}=(\mathfrak{t}_{p_i})^0_v=\mathfrak{t}^0_{([\tau,v],\alpha)}=\mathfrak{t}_p^0,
\] which shows that $\beta$ belongs to the right-hand side of \eqref{eqn:intersectionequality}. Conversely, suppose $\beta$ belongs to the right-hand side of \eqref{eqn:intersectionequality} and  $v\in \mu^{-1}_{\mathcal{SN}_{p_i}}(\pi_{p_i}(x-x_0))$. Consider $s\in ]0,1]$ small enough so that $([1,sv],s^2\alpha)$ lies in the open $\psi(\widehat{U}_i)$ around $([1,0],0)$, with $\alpha\in\mathfrak{t}_{p_i}^0=\ker(\pi_{p_i})$ given by $x-x_0=\alpha+\mathfrak{p}_i(\pi_{p_i}(x-x_0))$. For $p:=\psi_i^{-1}([1,sv],s^2\alpha)$ it holds that $\mu(p)=x_0+s^2(x-x_0)\in ]x_0,x]$. So, 
\[
\beta\in \mathfrak{t}_p^0=\mathfrak{t}_{([1,sv],s^2\alpha)}^0=\mathfrak{t}_{v}^0, 
\] 
which shows that $\beta$ belongs to the left-hand side of \eqref{eqn:intersectionequality}. Thus, \eqref{eqn:intersectionequality} holds. Combining this with (ii) and what was shown before \eqref{eqn:intersectionequality}, we find that 
\[
T_x\big(x_0+\mathring{\sigma})=\bigcap_{p\in \mu^{-1}(]x_0,x])} \mathfrak{t}_p^0.
\] To conclude from this that \eqref{eqn:locmodfan:mommap:0} holds, note that in view of \eqref{eqn:tngtsp:hamstrat}, the fact that $]x_0,x]$ is contained in the  stratum $\Sigma$ of $\mathcal{S}_\mathrm{Ham}(\Delta)$ through $x$, and the fact that $T_y\Sigma=T_x\Sigma$
 for all $y\in \Sigma$ ($\Sigma$ being relatively open), it holds that 
 \[
 \bigcap_{p\in \mu^{-1}(y)}\mathfrak{t}_p^0=\bigcap_{p\in \mu^{-1}(x)}\mathfrak{t}_p^0
 \] for all $y\in ]x_0,x]$.   
\end{proof}

\begin{proof}[Proof of Theorem \ref{thm:momim:locpoldec} and Theorem \ref{thm:momim:stratcomparison}] Since $\Delta$ is convex, any non-empty open subset of $\Delta$ has the same affine hull as $\Delta$. Using this, \eqref{eqn:tngtsp:hamstrat}, the fact that $\mathcal{S}_\mathrm{Ham}(\Delta)$ is affine, and the fact that the affine hull of $\Delta$ is  parallel to the annihilator of the intersection of all isotropy Lie algebras of the $T$-action, it readily follows that the strata of $\mathcal{S}_\mathrm{Ham}(\Delta)$ that are open in $\Delta$ are the connected components of the set of relative regular values attained by $\mu$. In view of Remark \ref{rem:maxelts:locpolchambdec} and Remark \ref{rem:maxelts=closuresopenstrata}, the theorems therefore follow by applying Theorem \ref{thm:affstrat:chamberdec:correspondence} and Theorem \ref{thm:convexity:strata:general} to $\mathcal{S}_\mathrm{Ham}(\Delta)$, which is possible because of Proposition \ref{prop:isofans:locmodhamstrat}. 
\end{proof}

\subsection{The relationship with $T_\mathbb{C}$-orbit closures in K\"{a}hler manifolds}\label{sec:orbitclosures} Suppose now that $(S,\omega)$ is a compact K\"{a}hler manifold with a Hamiltonian $T$-action by biholomorphisms. In that case, the $T$-action extends uniquely to a holomorphic action on $S$ of the complexification $T_\mathbb{C}$. For each $p\in S$, the image of the $T_\mathbb{C}$-orbit closure $\overline{T_\mathbb{C}\cdot p}$ in $S$ under the momentum map $\mu:(S,\omega)\to \mathfrak{t}^*$ is a convex polytope \cite[Theorem 2(a)]{At82}. Let $\mathcal{C}$ be the finite collection of these polytopes. This covers $\Delta:=\mu(S)$, is closed under taking faces, and has a partial order defined as
\[
P\leq Q \iff P\text{ is a face of }Q.
\]
As in \cite{GorMac87}, consider the partition of $\Delta$ given by the equivalence relation 
\[
x\sim y \iff \{P\in \mathcal{C}\mid x\in P\}=\{P\in \mathcal{C}\mid y\in P\}.
\] This partition, which we will denote as $\mathcal{P}_{T_\mathbb{C}}(\Delta)$, is in fact a stratification with possibly disconnected components. The member of $\mathcal{P}_{T_\mathbb{C}}(\Delta)$ that contains a given point $x\in \Delta$ is equal to the relatively open subset
\begin{equation}\label{eqn:member:part:GorMac}
\bigcap_{\{P\in \mathcal{C}\mid x\in \mathring{P}\}} \mathring{P}\quad \big\backslash\,\,\, \bigcup_{\{P\in \mathcal{C}\mid x\notin P\}} P, 
\end{equation} 
where the intersection is over all $P\in \mathcal{C}$ with $x$ in their relative interior $\mathring{P}$, or equivalently, all $P\in \mathcal{C}$ that contain $x$ and are minimal amongst such elements with respect to the above partial order on $\mathcal{C}$. As stated in the introduction, the following holds.
\begin{proposition}\label{prop:equivalence:GorMac:Hamstrat} The partition of $\Delta$ by connected components of the members of $\mathcal{P}_{T_\mathbb{C}}(\Delta)$ coincides with $\mathcal{S}_\mathrm{Ham}(\Delta)$. 
\end{proposition}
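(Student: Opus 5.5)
Both partitions consist of relatively open subsets of $\mathfrak{t}^*$: the members of $\mathcal{P}_{T_\mathbb{C}}(\Delta)$ by \eqref{eqn:member:part:GorMac}, and the strata of $\mathcal{S}_\mathrm{Ham}(\Delta)$ because it is affine. My plan is to compare their tangent spaces pointwise and then use local finiteness. Concretely, I want to show that for every $x\in\Delta$, the tangent space at $x$ of the member $M_x$ of $\mathcal{P}_{T_\mathbb{C}}(\Delta)$ through $x$ equals $\bigcap_{p\in\mu^{-1}(x)}\mathfrak{t}_p^0$, which is $T_x\Sigma$ by \eqref{eqn:tngtsp:hamstrat}. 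Once this holds, $\mathcal{P}_{T_\mathbb{C}}(\Delta)$ is an affine partition whose germs at each point coincide with those of $\mathcal{S}_\mathrm{Ham}(\Delta)$: a relatively open set is determined near $x$ by $T_x$, and both partitions are locally finite. Consequently the connected components agree, by the same argument as in Proposition \ref{prop:affstrat:locmodfan:tangentcrit}, i.e.\ a connectedness argument along paths inside either set.

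\textbf{Step 1.} I first establish the tangent space of $M_x$. Near $x$, the set $M_x$ agrees with $\bigcap_{\{P\in\mathcal{C}\mid x\in\mathring{P}\}}\mathring{P}$, since the polytopes not containing $x$ are closed and finitely many. Hence $T_xM_x=\bigcap_{x\in\mathring P}\mathrm{Aff}_0(P)$, where $\mathrm{Aff}_0(P)$ is the linear space parallel to the affine hull of $P$.

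\textbf{Step 2.} Next I use Atiyah's description \cite{At82}: $\mu(\overline{T_\mathbb{C}\cdot p})$ is the convex hull of $\mu$ of the fixed points in the orbit closure. Moreover, the relative interior of this polytope is $\mu(T_\mathbb{C}\cdot p)$, and $\mu$ restricted to $T_\mathbb{C}\cdot p$ is a diffeomorphism from $T_\mathbb{C}\cdot p/T$ onto the relative interior, whose tangent spaces are parallel to $\mathfrak{t}_p^0$. So for $P=\mu(\overline{T_\mathbb{C} p})$ we get $\mathrm{Aff}_0(P)=\mathfrak{t}_p^0$. The orbit closure is a union of $T_\mathbb{C}$-orbits whose images are exactly the relative interiors of the faces of $P$. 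It follows that $x\in\mathring P$ with $P\in\mathcal{C}$ if and only if $P=\mu(\overline{T_\mathbb{C}\cdot q})$ for some $q\in\mu^{-1}(x)$. Here one direction uses that $x=\mu(q')$ for some $q'\in T_\mathbb{C}\cdot p$, and that $\overline{T_\mathbb{C}q'}=\overline{T_\mathbb{C}p}$ with $\mathfrak{t}_{q'}=\mathfrak{t}_p$.

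\textbf{Step 3.} Combining, the polytopes $P\in\mathcal{C}$ with $x\in\mathring P$ are exactly the orbit-closure images $\mu(\overline{T_\mathbb{C}\cdot q})$ for $q\in\mu^{-1}(x)$, and each has $\mathrm{Aff}_0(P)=\mathfrak{t}_q^0$. Therefore $T_xM_x=\bigcap_{q\in\mu^{-1}(x)}\mathfrak{t}_q^0$, as required. Since $\mathcal{C}$ is finite, $\mathcal{P}_{T_\mathbb{C}}(\Delta)$ is a locally finite partition into relatively open sets, and its connected components are relatively open with the same tangent spaces. I then conclude as in the proof of Proposition \ref{prop:affstrat:locmodfan:tangentcrit}. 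For this I still need to check that the components of members of $\mathcal{P}_{T_\mathbb{C}}(\Delta)$ are open in their affine hulls. That is immediate, because each member is relatively open with constant tangent space on each component.

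The main obstacle is the identification in Step 2 that $\mu(T_\mathbb{C}\cdot p)$ equals the relative interior of $\mu(\overline{T_\mathbb{C}\cdot p})$, with tangent space $\mathfrak{t}_p^0$. This is part of Atiyah's theorem, but I would verify it with care. The argument is that $\mu$ on $T_\mathbb{C}\cdot p$ is the gradient-type map of a strictly convex function on $\mathfrak{t}/\mathfrak{t}_p$, which gives a diffeomorphism onto an open subset of the affine hull. The boundary orbits in the closure map into proper faces, which yields the claimed interior statement.
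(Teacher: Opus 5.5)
Your proposal is correct and follows essentially the same route as the paper. You reduce, via the tangent-space argument behind Proposition \ref{prop:affstrat:locmodfan:tangentcrit}, to the pointwise identity $\bigcap_{\{P\in\mathcal{C}\mid x\in\mathring{P}\}}T_x\mathring{P}=\bigcap_{p\in\mu^{-1}(x)}\mathfrak{t}_p^0$, and you deduce it from Atiyah's equality $\mathring{P}=\mu(T_\mathbb{C}\cdot p)$ together with the fact that this set is open in an affine subspace parallel to $\mathfrak{t}_p^0$.

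The differences are minor. The paper proves that last fact directly: it shows $\mathrm{d}\mu_q$ maps $\mathrm{a}_q(\mathfrak{t})+J\mathrm{a}_q(\mathfrak{t})$ onto $\mathfrak{t}_q^0$, using positivity of $\omega(\mathrm{a}_q(\xi),J\mathrm{a}_q(\xi))$ and a dimension count. You instead get it from the strictly convex potential on $\mathfrak{t}/\mathfrak{t}_p$. You also spell out the matching between polytopes $P$ with $x\in\mathring{P}$ and points $q\in\mu^{-1}(x)$, which the paper leaves as ``readily follows''.
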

\begin{proof} In view of the description \eqref{eqn:member:part:GorMac}, the connected components of the members of $\mathcal{P}_{T_\mathbb{C}}(\Delta)$ partition $\Delta$ into relatively open connected subsets of $\mathfrak{t}^*$ with tangent spaces given by
\[ 
T_x\Sigma=\bigcap_{\{P\in \mathcal{C}\mid x\in \mathring{P}\}} T_x\mathring{P} 
\] 
for any such component $\Sigma$ and $x\in \Sigma$. Since $\mathcal{S}_\mathrm{Ham}(\Delta)$ is a partition of $\Delta$ into relatively open connected subsets as well, by the argument in for the proof of Proposition \ref{prop:affstrat:locmodfan:tangentcrit} it is enough to show that these tangent spaces are the same as those of the strata of $\mathcal{S}_\mathrm{Ham}(\Delta)$, which are given by \eqref{eqn:tngtsp:hamstrat}. So, we need to show that, for any $x\in \Delta$,
\begin{equation}\label{eqn:tgntsp:GorMac}
\bigcap_{\{P\in \mathcal{C}\mid x\in \mathring{P}\}}T_x\mathring{P}=\bigcap_{p\in \mu^{-1}(x)} \mathfrak{t}_p^0.
\end{equation}
 Given $p\in S$, for $P:=\mu(\overline{T_\mathbb{C}\cdot p})$ it holds that $\mathring{P}=\mu(T_\mathbb{C}\cdot p)$, in view of \cite[Theorem 2(b)]{At82}. Writing $\mathrm{a}_q:\mathfrak{t}\to T_qS$ for the linear map given by the Lie algebra action associated to the $T$-action, the tangent space to $T_\mathbb{C}\cdot p$ at a point $q$ is equal to $\mathrm{a}_q(\mathfrak{t})+J\mathrm{a}_q(\mathfrak{t})$, with $\mu$ the endomorphism of $TS$ defining the complex structure on $S$. Further note that $\mathrm{d}\mu_q\circ \mathrm{a}_q=0$ (because $\mu$ is $T$-invariant) and $\mathrm{d}\mu_q\circ J\circ \mathrm{a}_q$ descends to an injective map $\mathfrak{t}/\mathfrak{t}_q\to \mathfrak{t}^*$ with $\mathfrak{t}_q:=\ker(\mathrm{a}_q)$ the $\mathfrak{t}$-isotropy Lie algebra at $q$, for if $\xi\in \mathfrak{t}$ and $\xi\notin\mathfrak{t}_q$ then by the momentum map condition and positive definiteness of $\omega(\cdot,J\cdot)$, 
\[
\langle (\mathrm{d}\mu_q\circ J\circ \mathrm{a}_q)(\xi), \xi\rangle =\omega(\mathrm{a}_q(\xi),J\mathrm{a}_q(\xi))>0.
\] Since the image of $\mathrm{d}\mu_q$ is $\mathfrak{t}_q^0$, it therefore follows from a dimension count that $\mathrm{d}\mu_q$ maps the tangent space of $T_\mathbb{C}\cdot p$ at $q$ onto $\mathfrak{t}_q^0$. Because $T_\mathbb{C}$ is abelian, all points in $T_\mathbb{C}\cdot p$ have the same $\mathfrak{t}$-isotropy Lie algebra, and so $\mu:T_\mathbb{C}\cdot p\to \mathfrak{t}^*$ is a smooth map whose derivative at any point $q$ has image $\mathfrak{t}_p^0$. It must therefore be a submersion into an affine subspace parallel to $\mathfrak{t}_p^0$ and, hence, its image $\mathring{P}$ is open in this affine subspace. From this \eqref{eqn:tgntsp:GorMac} readily follows. 
\end{proof}
Recall that, for any $x\in \Delta$, the set $X^\mathrm{ss}_x$ of semistable points in $S$ with respect to the shifted momentum map $\mu-x$ is given by
\[
X^\mathrm{ss}_x:=\{p\in S\mid \overline{T_\mathbb{C}\cdot p}\cap \mu^{-1}(x)\neq\emptyset\}=\{p\in S\mid x\in \mu(\overline{T_\mathbb{C}\cdot p})\}.
\] By the analytic version of the Kempf-Ness theorem \cite{HeiLoo94,Kir84,Sja95}, the topological quotient $X^\mathrm{ss}_x\sslash T_\mathbb{C}$ of $X^\mathrm{ss}_x$ under the equivalence relation  
\[
p\sim q\iff \overline{T_\mathbb{C}\cdot p}\cap\overline{T_\mathbb{C}\cdot q}\cap X^\mathrm{ss}_x\neq\emptyset 
\] is naturally a complex analytic space and the inclusion of $\mu^{-1}(x)$ into $X^\mathrm{ss}_x$ descends to a homeomorphism 
\begin{equation}\label{eqn:kempfness:analytic}
\mu^{-1}(x)/T\cong X^\mathrm{ss}_x\sslash T_\mathbb{C}. 
\end{equation}
So, since the partition $\mathcal{P}_{T_\mathbb{C}}(\Delta)$ is also given by the equivalence relation 
\[
x\sim y\iff X^\mathrm{ss}_x=X^\mathrm{ss}_y,
\] the reduced spaces \eqref{eqn:kempfness:analytic} at any two values in the same member of $\mathcal{P}_{T_\mathbb{C}}(\Delta)$ are isomorphic as complex analytic spaces. It follows from Proposition \ref{prop:equivalence:GorMac:Hamstrat} that this also holds for any two values in the same stratum of $\mathcal{S}_\mathrm{Ham}(\Delta)$. 
\\

For disconnected $S$, the members of $\mathcal{P}_{T_\mathbb{C}}(\Delta)$ need not be connected. The following shows that they are connected when $S$ is connected.
\begin{proposition}\label{prop:partitionmember:intersection}
    If $S$ is connected, then for any $x\in \Delta$ the member of $\mathcal{P}_{T_\mathbb{C}}(\Delta)$ that contains $x$ is equal to the convex relatively open subset
    \begin{equation}\label{eqn:polytopepartition:connected}
    \bigcap_{\{P\in \mathcal{C}\mid x\in \mathring{P}\}} \mathring{P}.
    \end{equation}
\end{proposition}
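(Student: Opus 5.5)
By the description \eqref{eqn:member:part:GorMac}, the member $M_x$ of $\mathcal{P}_{T_\mathbb{C}}(\Delta)$ through $x$ equals $I_x\setminus\bigcup_{\{Q\in\mathcal{C}\mid x\notin Q\}}Q$, where $I_x$ denotes the set \eqref{eqn:polytopepartition:connected}. So the claim is equivalent to: no $Q\in\mathcal{C}$ with $x\notin Q$ meets $I_x$.

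\textbf{Soft part.} $I_x$ is a finite intersection of relatively open convex sets containing $x$. Hence it is convex and relatively open, and its tangent space is $\bigcap_{\{P\mid x\in\mathring{P}\}}T_x\mathring{P}$.

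Let $\Sigma$ be the connected component of $M_x$ through $x$. By Proposition \ref{prop:equivalence:GorMac:Hamstrat}, $\Sigma$ is the stratum of $\mathcal{S}_\mathrm{Ham}(\Delta)$ through $x$. By the proof of that proposition, it has the same tangent space as $I_x$. Since $\Sigma\subset I_x$ and $\Sigma$ is relatively open of the same dimension, $\Sigma$ is open in $I_x$.

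$I_x$ is connected, being convex. It therefore suffices to show that $\Sigma$ is closed in $I_x$. Then $\Sigma=I_x$, so $I_x\subset M_x$, and the reverse inclusion is immediate from \eqref{eqn:member:part:GorMac}.

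\textbf{Closedness.} Let $y\in I_x\cap\overline{\Sigma}$, and let $\Sigma_y$ be the stratum of $\mathcal{S}_\mathrm{Ham}(\Delta)$ through $y$. Every $P$ with $x\in\mathring P$ also has $y\in\mathring P$, because $y\in I_x$. By \eqref{eqn:tgntsp:GorMac} this gives $T\Sigma_y\subset TI_x=T\Sigma$.

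By the frontier condition, $\Sigma_y=\Sigma$ if and only if $\dim\Sigma_y=\dim\Sigma$. So it suffices to show that every $Q\in\mathcal{C}$ with $y\in\mathring{Q}$ satisfies $T\mathring{Q}\supset T\Sigma$. Granting this, $\Sigma$ lies locally near $y$ inside the affine hull of $Q$, hence inside $\mathring Q$. The set of polytopes containing a point is constant along $\Sigma$, so $Q$ contains $\Sigma$, and therefore $x\in Q$ by closedness of $Q$.

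\textbf{Main obstacle.} The real content, and the only place where connectedness of $S$ enters, is excluding a polytope $Q$ that contains $y$ but whose affine hull cuts $\Sigma$ transversally, or that meets $I_x$ while missing $x$.

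My plan is to use the identification of $\mathcal{P}_{T_\mathbb{C}}(\Delta)$ with the relation $X^\mathrm{ss}_x=X^\mathrm{ss}_y$. I would combine it with the following facts, valid because $S$ is connected:
\begin{itemize}
\item the fibres $\mu^{-1}(y)$ are connected \cite[Theorem 30]{BjKa10};
\item for $z\in\Sigma$ near $y$, the semistable set $X^\mathrm{ss}_z$ is a constant open set containing $X^\mathrm{ss}_y$.
\end{itemize}

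The first thing to try is as follows. Let $q$ be a point with $\mu(\overline{T_\mathbb{C} q})=Q$. Use the MGS normal form at points of $\mu^{-1}(y)$, as in the proof of Proposition \ref{prop:isofans:locmodhamstrat}, to move $q$ within $T_\mathbb{C}$-orbits near $\mu^{-1}(y)$. The aim is to show that $\overline{T_\mathbb{C} q}\cap\mu^{-1}(y)$ contains points whose orbit-closure polytopes have relative interiors containing a neighbourhood of $y$ in $\Sigma$.

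In the linear model this is the GKZ statement recalled before Lemma \ref{lemma:stratification:chamberfan}, namely that chambers are intersections of relative interiors of the cones $\mathrm{Cone}(\mathcal I)$. I expect this to work because, locally, $\overline{T_\mathbb{C}q}$ is modelled on the closure of a coordinate-type orbit in $\mathcal{SN}_p$, whose image is a cone $\mathrm{Cone}(\mathcal I)$ translated to $y$.

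Connectedness of $\mu^{-1}(y)$ would then be used to pass from local to global. Without it, one can have two components of $S$ whose polytopes overlap in $I_x$ without one containing $x$, which is exactly the failure in the disconnected case.
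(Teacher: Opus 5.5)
\textbf{There is a genuine gap: the decisive step is only planned, not proved.}

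Your opening reduction is sound and matches the paper's first step. Write $I_x$ for the set \eqref{eqn:polytopepartition:connected}. The member $M_x$ through $x$ is open in the convex set $I_x$, so one must show it is closed there. Equivalently, a point $y\in I_x$ in the closure of $M_x$ must lie in no $Q\in\mathcal{C}$ with $x\notin Q$. Your detour through $\Sigma$, tangent spaces and the frontier condition is correct, but it only rephrases this.

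That remaining step is the entire content of the proposition, and you do not prove it. The section ``Main obstacle'' is a plan (``the first thing to try'', ``I expect this to work''), and its ingredients do not hold up:
\begin{itemize}
\item The second listed ``fact'' is backwards and circular. Since the polytopes in $\mathcal{C}$ are closed, $z\in\Sigma$ and $y\in\overline{\Sigma}$ always give $X^\mathrm{ss}_z\subset X^\mathrm{ss}_y$. The reverse inclusion $X^\mathrm{ss}_y\subset X^\mathrm{ss}_z$ says exactly that every polytope containing $y$ contains $z$, which is what you must prove.
\item The MGS normal form is a symplectic model that ignores the complex structure. It cannot be used to move $q$ inside $T_\mathbb{C}$-orbits or to read off germs of orbit-closure polytopes; that would need a holomorphic slice theorem.
\item You never say how connectedness of $\mu^{-1}(y)$ is used, and local models alone cannot settle the question. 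In your own disconnected situation (two $S^1$-spheres with images $[0,2]$ and $[1,3]$, and $x=\tfrac12$), the point $y=1$ lies in $I_x\cap\overline{\Sigma}$ and in $Q=\{1\}$, with a perfectly legitimate local picture at $y$.
\end{itemize}

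\textbf{The missing idea.} The paper's argument is global and uses the Kempf--Ness picture directly.
\begin{enumerate}
\item Given $Q\ni y$, pick $q\in\mu^{-1}(y)$ with $\mu(T_\mathbb{C}q)\subset Q$.
\item Because $y$ lies in the closure of $M_x$, one has $X^\mathrm{ss}_x\subset X^\mathrm{ss}_y$. Via \eqref{eqn:kempfness:analytic} this inclusion descends to a continuous map $\mu^{-1}(x)/T\to\mu^{-1}(y)/T$ sending $Tp$ to $\overline{T_\mathbb{C}p}\cap\mu^{-1}(y)$.
\item Connectedness of $S$ enters through Kirwan's theorem that $X^\mathrm{ss}_x$ is dense in $S$, hence in $X^\mathrm{ss}_y$. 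So this map has dense image. Its source is compact and its target Hausdorff, so the image is also closed, and the map is onto.
\item Hence $q\in\overline{T_\mathbb{C}p}$ for some $p\in\mu^{-1}(x)$. Put $P=\mu(\overline{T_\mathbb{C}p})$; then $y\in I_x\subset\mathring{P}$.
\item By Atiyah, $\mu^{-1}(\mathring{P})\cap\overline{T_\mathbb{C}p}=T_\mathbb{C}p$. So $q\in T_\mathbb{C}p$, and therefore $x=\mu(p)\in\mu(T_\mathbb{C}q)\subset Q$.
\end{enumerate}

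A local version of your idea can be completed, but it needs different inputs from the ones you list:
\begin{itemize}
\item the global fact that all cones $\pi_p^{-1}(\Delta_p)$, $p\in\mu^{-1}(y)$, equal $\sigma_y(\Delta)$ (Proposition~\ref{prop:cone:mommapimage});
\item a holomorphic slice theorem;
\item Carath\'eodory's theorem.
\end{itemize}
Together these produce, when $y\notin\Sigma$, some $x'\in\Sigma$ near $y$ and some $Q$ with $x'\in\mathring{Q}$ but $y\in Q\setminus\mathring{Q}$. This contradicts $y\in I_{x'}=I_x$. None of this appears in your proposal.
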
 
\begin{proof} Let $x\in \Delta$. Since the member containing $x$ is given by \eqref{eqn:member:part:GorMac}, it is an open subset of the connected set \eqref{eqn:polytopepartition:connected}. So, it is enough to show that \eqref{eqn:member:part:GorMac} is also closed in \eqref{eqn:polytopepartition:connected}. To this end, suppose that $y$ lies in the closure of \eqref{eqn:member:part:GorMac} in \eqref{eqn:polytopepartition:connected}. We ought to show $y$ does not lie in any $Q\in \mathcal{C}$ for which $x\not\in Q$. For this, suppose that $Q\in \mathcal{C}$ contains $y$. Then there is a $q\in \mu^{-1}(y)$ such that $\mu(T_\mathbb{C}\cdot q)\subset Q$. As noted in \cite{GorMac87}, the fact that  $y$ lies in the closure of \eqref{eqn:member:part:GorMac} implies that $X^\mathrm{ss}_x\subset X^\mathrm{ss}_y$ and the inclusion map descends via \eqref{eqn:kempfness:analytic} to a continuous (in fact, complex analytic) map 
\begin{equation}\label{eqn:specializationmap}
\mu^{-1}(x)/T\to \mu^{-1}(y)/T,
\end{equation} which sends the $T$-orbit of a point $p\in \mu^{-1}(x)$ to the $T$-orbit $\overline{T_\mathbb{C}\cdot p}\cap \mu^{-1}(y)$. Since $S$ is compact and connected, $X^\mathrm{ss}_x$ is dense in $S$ (see \cite{Kir84}) and, hence, $X^\mathrm{ss}_x$ is dense in $X^\mathrm{ss}_y$ too. So, since the image of \eqref{eqn:specializationmap} is that of $X^\mathrm{ss}_x$ under the quotient map from $X^\mathrm{ss}_y$ to $\mu^{-1}(y)/T$, it is dense as well. Because the domain of \eqref{eqn:specializationmap} is compact and its target is Hausdorff, its image is also closed. Therefore \eqref{eqn:specializationmap} is surjective and, so, there is a $p\in \mu^{-1}(x)$ such that $q$ is contained in the $T$-orbit $\overline{T_\mathbb{C}\cdot p}\cap \mu^{-1}(y)$. Since, by assumption, $y$ belongs to \eqref{eqn:polytopepartition:connected}, it lies in $\mathring{P}=\mu(T_\mathbb{C}\cdot p)$ for $P:=\mu(\overline{T_\mathbb{C}\cdot p})$. So, because $\mu^{-1}(\mathring{P})\cap \overline{T_\mathbb{C}\cdot p}=T_\mathbb{C}\cdot p$ (in view of \cite[Theorem 2(b)]{At82}), it must be that $q\in T_\mathbb{C}\cdot p$. Hence, $p\in T_\mathbb{C}\cdot q$ and, so, $x=\mu(p)\in Q$. 
\end{proof} 
In view of Proposition \ref{prop:equivalence:GorMac:Hamstrat}, this implies the following about $\mathcal{S}_\mathrm{Ham}(\Delta)$, which is a rephrasing of Proposition \ref{prop:orbitclosurepolytopesintersection}. 
\begin{corollary} If $S$ is connected, then the stratum of $\mathcal{S}_\mathrm{Ham}(\Delta)$ through a given point $x$ equals the intersection \eqref{eqn:polytopepartition:connected}. 
\end{corollary}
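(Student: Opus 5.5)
The corollary follows by composing Proposition \ref{prop:equivalence:GorMac:Hamstrat} with Proposition \ref{prop:partitionmember:intersection}.

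Fix $x\in\Delta$ and let $M_x\in\mathcal{P}_{T_\mathbb{C}}(\Delta)$ be the member of the Goresky--MacPherson partition containing $x$.

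First, Proposition \ref{prop:equivalence:GorMac:Hamstrat} says that the strata of $\mathcal{S}_\mathrm{Ham}(\Delta)$ are exactly the connected components of the members of $\mathcal{P}_{T_\mathbb{C}}(\Delta)$. Hence the stratum of $\mathcal{S}_\mathrm{Ham}(\Delta)$ through $x$ is the connected component of $M_x$ containing $x$.

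Second, since $S$ is connected, Proposition \ref{prop:partitionmember:intersection} identifies $M_x$ with the intersection \eqref{eqn:polytopepartition:connected} of the relative interiors $\mathring{P}$, over $P\in\mathcal{C}$ with $x\in\mathring{P}$. This intersection is a finite intersection of convex sets, so it is convex. It is non-empty because it contains $x$. Being convex and non-empty, it is connected.

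It follows that $M_x$ has a single connected component, namely $M_x$ itself. Therefore the stratum through $x$ equals $M_x$, which is \eqref{eqn:polytopepartition:connected}.

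The only point requiring care is the connectedness of $M_x$, which is what lets us drop "connected component of" from the first step; it holds by the convexity just noted. Everything substantive is already contained in the two cited propositions, so no step is a genuine obstacle.
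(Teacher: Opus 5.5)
Your proposal is correct and is the paper's own argument. The paper also obtains the corollary by combining Proposition \ref{prop:equivalence:GorMac:Hamstrat} (the strata of $\mathcal{S}_\mathrm{Ham}(\Delta)$ are the connected components of the members of $\mathcal{P}_{T_\mathbb{C}}(\Delta)$) with Proposition \ref{prop:partitionmember:intersection}, whose identification of the member through $x$ with the convex, hence connected, set \eqref{eqn:polytopepartition:connected} lets you drop the passage to connected components.
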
 
\begin{remark} In contrast, it is not true that the strata of $\mathcal{S}_\mathrm{Ham}(\Delta)$ are intersections of images of $T$-orbit type strata (see, e.g., Example \ref{example:hamstrat}).
\end{remark}

\end{document}